\newcolumntype{C}[1]{>{\centerring\arraybackslash}p{#1}}
\DeclarePairedDelimiter\ceil{\lceil}{\rceil}
\DeclarePairedDelimiter\floor{\lfloor}{\rfloor}
\newtheorem{theorem}{Theorem}[section]
\newtheorem*{theorem*}{Theorem}
\newtheorem{lemma}[theorem]{Lemma}
\newtheorem{corollary}[theorem]{Corollary}
\newtheorem{proposition}[theorem]{Proposition}
\theoremstyle{definition}
\newtheorem{definition}[theorem]{Definition}
\theoremstyle{remark}
\newtheorem{remark}[theorem]{Remark}
\numberwithin{equation}{section}
\begin{document}

\title{Congruences for a class of eta-quotients and their applications}

\author{Shashika Petta Mestrige}
\address{Mathematics Department\\
Louisiana State University\\
Baton Rouge, Louisiana}
\email{pchama1@lsu.edu}

\subjclass[2010]{Primary 11P83; Secondary 05A17}
\date{Sep 11, 2020}

\begin{abstract}
The partition function $ p_{[1^c\ell^d]}(n)$ can be defined using the generating function,
\[\sum_{n=0}^{\infty}p_{[1^c{\ell}^d]}(n)q^n=\prod_{n=1}^{\infty}\dfrac{1}{(1-q^n)^c(1-q^{\ell n})^d}.\]
In \cite{P}, we proved infinite family of congruences for this partition function for $\ell=11$. In this paper, we extend the ideas that we have used in  \cite{P} to prove infinite families of congruences for the partition function $p_{[1^c\ell^d]}(n)$ modulo powers of $\ell$ for any integers $c$ and $d$, for primes $5\leq \ell\leq 17$. This generalizes Atkin, Gordon and Hughes' congruences for powers of the partition function. The proofs use an explicit basis for the vector space of modular functions of the congruence subgroup  $\Gamma_0(\ell)$. Finally we use these congruences to prove  congruences and incongruences of the $\ell$-colored generalized Frobenius partitions, $\ell-$regular partitions, and $\ell-$core partitions for $\ell=5,7,11,13$ and $17$.

\end{abstract}

\maketitle
\section{Introduction}
\label{intro}

An (integer) partition of $n$ is a non-increasing sequence of positive integers $\lambda_1 \geq \lambda_2 \cdots \geq \lambda_r \geq 1$ that sum to $n$. Let $p(n)$ be the number of partitions of $n$. By convention, we take  $p(0)=1$ and $p(n)=0$ for negative $n$.

This function has been extensively studied since the last century. In the 1920's
Ramanujan discovered amazing congruence properties for $p(n)$.

 \begin{theorem*}[Ramanujan, Watson, Atkin]\label{T:1.0}
    For all positive integers $j$, we have,
    \begin{align*}
p(5^jn+\delta_{5,j}) & \equiv0\pmod{5^j}, \\
p(7^jn+\delta_{7,j}) & \equiv0\pmod{7^{[\frac{j+2}{2}]}}, \\
p(11^jn+\delta_{11,j}) & \equiv0\pmod{11^j},
\end{align*}
    where  $24\delta_{\ell,j}\equiv1\pmod{\ell^j}$ for $\ell \in \{5,7,11\}$.
\end{theorem*}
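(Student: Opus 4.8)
The plan is to treat all three primes $\ell\in\{5,7,11\}$ in a single framework built on the Dedekind eta function and the Atkin--Lehner operator $U_\ell$. Writing $q=e^{2\pi i\tau}$ and $\eta(\tau)=q^{1/24}\prod_{n\ge 1}(1-q^n)$, the generating function reads $\sum_{n}p(n)q^n=q^{1/24}/\eta(\tau)$. Because of the fractional exponent $1/24$, I would first pass to the variable $24\tau$ so that $\sum_n p(n)q^{24n-1}=1/\eta(24\tau)$ has integer exponents. The congruence $24\delta_{\ell,j}\equiv 1\pmod{\ell^j}$ is precisely the condition guaranteeing that the exponents $24(\ell^j n+\delta_{\ell,j})-1$ all lie in the residue class $\equiv 0\pmod{\ell^j}$, so that extracting the arithmetic progression $p(\ell^j n+\delta_{\ell,j})$ is exactly an iterate of the operator $U_\ell:\sum_n a(n)q^n\mapsto\sum_n a(\ell n)q^n$ applied to a suitable twist of $1/\eta$.

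I would then dispose of the base case $j=1$ using the classical Ramanujan dissection identities
\begin{align*}
\sum_{n\ge 0}p(5n+4)q^n &= 5\,\frac{(q^5;q^5)_\infty^5}{(q;q)_\infty^6},\\
\sum_{n\ge 0}p(7n+5)q^n &= 7\,\frac{(q^7;q^7)_\infty^3}{(q;q)_\infty^4}+49\,q\,\frac{(q^7;q^7)_\infty^7}{(q;q)_\infty^8},
\end{align*}
which exhibit the explicit factor $5$ (resp.\ $7$), and for $\ell=7$ even a factor $49$ on the second term; the case $\ell=11$, where no comparably simple single eta-quotient identity exists, will instead fall out directly from the modular machinery below.

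Next I would set up the modular-function framework that the recursion requires. Since $X_0(\ell)$ has genus $0$ for the primes at hand, the field of modular functions on $\Gamma_0(\ell)$ is generated by a single Hauptmodul $t_\ell$, which one may take to be a power of $\eta(\tau)/\eta(\ell\tau)$. After multiplying the relevant $q$-series by an appropriate eta-quotient $E_\ell$ to cancel the fractional exponent and the poles away from $\infty$, the normalized functions $\Phi_j$ attached to $\sum_n p(\ell^j n+\delta_{\ell,j})q^{n}$ become genuine modular functions lying in the finite-dimensional space $V$ spanned by $\{t_\ell,t_\ell^2,\dots,t_\ell^m\}$, where the cutoff $m$ is controlled by the orders of vanishing at the cusps $0$ and $\infty$. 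The decisive structural fact is that $U_\ell$ maps $V$ into itself, so the entire problem collapses to understanding a single finite matrix: the matrix of $U_\ell$ on $V$ in the basis $\{t_\ell^i\}$.

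The heart of the proof, and the step I expect to be the main obstacle, is the $\ell$-adic control of this matrix. I would compute each $U_\ell(t_\ell^{\,i})$ explicitly as a polynomial in $t_\ell$, using the modular relation linking $t_\ell(\tau)$ to $t_\ell(\ell\tau)$, and show that the relevant entries of the transition matrix are divisible by $\ell$. For $\ell=5$ and $\ell=11$ the divisibility is clean: each application of $U_\ell$, together with the normalizing factor $E_\ell$, contributes one full power of $\ell$, yielding $\Phi_j\equiv 0\pmod{\ell^{j}}$ by induction on $j$ via the recursion $\Phi_{j+1}=U_\ell(E_\ell\,\Phi_j)$. For $\ell=7$ the matrix is only ``half'' divisible---its reduction modulo $7$ is nilpotent of index two rather than zero---so two successive applications of $U_7$ are needed to gain one extra power of $7$; tracking this slower rate produces the exponent $[\frac{j+2}{2}]$ in place of $j$. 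Establishing these exact divisibilities rigorously, rather than merely observing them numerically, is where the bulk of the technical work lies, relying on valence-formula bookkeeping at the cusps combined with the integrality of the eta-quotient coefficients.
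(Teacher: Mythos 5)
The paper does not actually prove this theorem; it is quoted as classical background, with the proofs attributed to Ramanujan ($j=1$ for $5,7$), Watson (all $j$ for $5,7$) and Atkin ($\ell=11$). The machinery the paper develops in Sections 2--5 (modular equations, the $U_\ell$ operator, $\ell$-adic bounds on the coefficients $a_{r,p}$, and the bookkeeping function $\theta_\ell$) is precisely the Watson--Atkin--Gordon framework you sketch, specialized to $c=1$, $d=0$. Your outline is broadly faithful for $\ell=5$ and $\ell=7$: the dissection identities you quote are correct, the reduction to a finite transition matrix for $U_\ell$ acting on polynomials in a Hauptmodul is the standard argument, and the ``one power of $7$ per two applications of $U_7$'' rate is the right explanation of the exponent $[\frac{j+2}{2}]$ --- though the precise mechanism is not that the matrix of $U_7$ is nilpotent mod $7$; rather, the two operators that alternate in the recursion, $f\mapsto U_7(\phi_7 f)$ and $f\mapsto U_7(f)$, have transition coefficients whose $7$-adic valuations, controlled by inequalities of the shape of \eqref{E:a7}, contribute $1$ and $0$ respectively.

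The genuine gap is your treatment of $\ell=11$. You assert that ``$X_0(\ell)$ has genus $0$ for the primes at hand'' and that the case $\ell=11$ will ``fall out directly'' from machinery built on a single Hauptmodul $t_\ell$. But $X_0(11)$ has genus $1$ (as the paper notes via Riemann--Hurwitz), so there is no Hauptmodul, the space $V$ is not spanned by powers of one eta-quotient, and the clean recursion you describe does not exist for this prime. This is exactly why the mod-$11^j$ congruence resisted proof until Atkin (1967): one must construct an explicit triangular basis $\{J_{11,\nu}\}$ of the modular functions on $\Gamma_0(11)$ holomorphic away from the cusps, show that $U_{11}$ acts on it with controlled $11$-adic valuations, and replace the single modular equation by a more complicated system of relations among two generators. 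As written, your argument delivers the $5$ and $7$ congruences but not the $11$ case; repairing it requires importing the genus-one basis of Atkin and Gordon rather than a power basis in a Hauptmodul.
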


Ramanujan in \cite{RS1}  proved the first two congruences for the case of $j=1$  by using the Jacobi triple product and later in \cite{RS2} by using the theory of modular forms on $SL_2(\mathbb{Z})$.  He conjectured that $p(7^jn+\delta_{7,j})$ is also divisible by $7^j$. In $1935$, Gupta showed that it is incorrect for $j=3$. However,  proofs for arbitrary $j$ for the first two congruences  were given in $1938$ by Watson in \cite{W} using modular equations for prime $5$ and $7$. Ramanujan in \cite{RS3} stated that he found a proof for the third congruence for $j=1,2$, but did not include the proof. In $1967$, Atkin proved the third congruence in \cite{A2}.\\

These fascinating congruence properties not only hold for the partition function itself, but also expected to hold for generalized partitions. We study a two-parameter family of partition generating functions $p_{[1^c\ell^d]}(n)$. This partition function is also well studied in recent years, for example see Chan and Toh \cite{CHT}, and Wang \cite{WL}. The partition function $p_{[1^c\ell^d]}(n)$ is defined using the generating function in the following way.
\begin{equation*}
\prod_{n=1}^{\infty}\dfrac{1}{(1-q^n)^c(1-q^{\ell n})^d}=\sum_{n=0}^{\infty}p_{[1^c\ell^d]}(n)q^n.
\end{equation*}
Generalizing partition function results to other functions had been a main research topic since 1960's. The first attempt was to study the multi-partitions or $k-$color partitions which are the coefficients of the $k^{th}$ power of the generating function of the partition function. In $1960$'s Atkin in \cite{A1}, proved congruences for $k-$ color partitions for small primes using modular equations. Notice that we can obtain the generating function for $k-$color partitions by setting $k=c,d=0$. See \cite{MI} for a recent study of the congruences of this partition function. \\

In \cite{P}, we proved a unified way to prove congruences for this partition function modulo powers of $11$, for all $c,d$ using Gordon's approach to prove congruences for $k-$color partitions. In this paper we revisit Atkin's proof and extend his result for $p_{[1^c\ell^d]}(n)$ for primes $5,7,$ and $13$.  We also proved congruences for $\ell=17$ using the work of Kim Hughes on $k-$color partitions modulo powers of $17$ in \cite{H}. 
 
\begin{theorem}\label{T:1.1}
For $\ell=5,7,13,17$, for any integers $c$, $d$ and for any positive integer $r$,
\begin{equation}\label{T:1}
    p_{[1^c{\ell}^d]}({\ell}^rm+n_r)\equiv 0\pmod{{\ell}^{A_r}},
\end{equation}
where $24n_r\equiv(c+{\ell}d)\pmod{{\ell}^r}$.
Here $A_r$  only depends on the integers $c,d$ and it can be calculated explicitly.
\end{theorem}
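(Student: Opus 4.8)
The plan is to interpret the arithmetic progressions $p_{[1^c\ell^d]}(\ell^r m + n_r)$ as Fourier coefficients of modular objects on $\Gamma_0(\ell)$ and to control their $\ell$-adic valuations by iterating Atkin's $U_\ell$ operator. Writing $F(\tau) = \eta(\tau)^{-c}\eta(\ell\tau)^{-d} = q^{-(c+\ell d)/24}\sum_n p_{[1^c\ell^d]}(n)\,q^n$, the normalization $24 n_r \equiv c + \ell d \pmod{\ell^r}$ is precisely what places the coefficients we want in a single residue class mod $\ell^r$, the one that $r$ successive applications of $U_\ell$ isolate. Concretely, I would set $B_r(\tau) = \sum_m p_{[1^c\ell^d]}(\ell^r m + n_r)\,q^m$; since $n_{r+1} \equiv n_r \pmod{\ell^r}$ we may write $n_{r+1} = n_r + \ell^r s_r$ with $0 \le s_r < \ell$, and a short computation gives the recursion $B_{r+1} = U_\ell\!\left(q^{-s_r} B_r\right)$. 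The first task is then to absorb the non-modular shift $q^{-s_r}$ into a fixed eta-quotient correction, so that each $B_r$ becomes (after multiplication by this correction) a weakly holomorphic modular form on $\Gamma_0(\ell)$ of the single weight $-(c+d)/2$; the basic identity driving the bookkeeping is $U_\ell\!\left(f(\tau)\,g(\ell\tau)\right) = U_\ell(f)(\tau)\,g(\tau)$.

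Next I would make the ambient space explicit. For the genus-zero levels $\ell \in \{5,7,13\}$ the function field of $\Gamma_0(\ell)$ is generated by the single Hauptmodul $t(\tau) = \bigl(\eta(\tau)/\eta(\ell\tau)\bigr)^{24/(\ell-1)}$, which one checks via Ligozat's criteria is a modular function with a simple pole at the cusp $\infty$ and no other poles; hence the modular functions whose only pole is at $\infty$, of order at most $N$, are exactly the span of $1, t, \dots, t^N$, and the weight-$k$ weakly holomorphic forms of bounded pole are $h\cdot\mathrm{span}(1,t,\dots,t^N)$ for a fixed auxiliary form $h$. The key structural claim to establish is that, after the normalization above, every $B_r$ lands in one such fixed finite-dimensional lattice $L = \mathbb{Z}_\ell\langle 1,t,\dots,t^N\rangle$, i.e. that the pole orders stabilize and that $U_\ell$ maps $L$ into itself. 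This is where the range $5 \le \ell \le 17$ and the integrality of the $q$-expansion of $t$ are used, and the two-parameter freedom in $(c,d)$ is what lets us match the order of vanishing at both cusps simultaneously for \emph{arbitrary} $c,d$.

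The engine is an $\ell$-adic contraction estimate for $U_\ell$ on $L$. Writing $U_\ell(t^i) = \sum_j M_{ij}\,t^j$, I would prove explicit lower bounds for the valuations $v_\ell(M_{ij})$ of the shape that forces $v_\ell$ of every coefficient of $U_\ell f$ to exceed that of $f$ by a definite, computable amount throughout the relevant range of exponents. Inserting these bounds into $B_{r+1} = U_\ell(q^{-s_r}B_r)$ and inducting on $r$ then yields $\ell^{A_r} \mid p_{[1^c\ell^d]}(\ell^r m + n_r)$ for all $m$, with $A_r$ obtained as the accumulated valuation gain; because the per-step gain is eventually periodic, $A_r$ comes out (piecewise) linear in $r$ with data depending only on $c$ and $d$, which is the promised explicit value. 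I expect the genuinely hard part to be establishing these valuation bounds \emph{uniformly} in all integers $c,d$ --- in particular for negative $c,d$ and across every residue of $c + \ell d$ modulo $\ell$, where both the pole order of $F$ and the starting valuation of $B_1$ must be pinned down --- and then squeezing out the \emph{exact} exponent $A_r$ rather than settling for a growing lower bound.

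Finally, the level $\ell = 17$ lies outside this framework because $X_0(17)$ has genus one, so its function field is not rational in a single Hauptmodul and the basis $1, t, \dots, t^N$ must be replaced by one reflecting the elliptic model. For this case I would import Kim Hughes' analysis of $k$-colored partitions modulo powers of $17$: his explicit basis for the relevant space of modular functions and his $U_{17}$-contraction data stand in for the Hauptmodul computation, after which the same recursion $B_{r+1}=U_{17}(q^{-s_r}B_r)$ and induction on $r$ deliver both the congruence and the explicit $A_r$.
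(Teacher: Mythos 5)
Your overall strategy coincides with the paper's: iterate $U_\ell$, expand the resulting objects in powers of an eta-quotient Hauptmodul of $\Gamma_0(\ell)$, prove $\ell$-adic lower bounds on the matrix of $U_\ell$ in that basis, sum the per-step gains to obtain $A_r$, and import Hughes' explicit basis for the genus-one level $17$. However, two steps as written would not go through. The first is the claim that a \emph{fixed} eta-quotient correction renders every $B_r$ modular. It does not: the correction must alternate with the parity of $r$ (the paper's $L_{2r}$ carries the prefactor $\prod(1-q^n)^c(1-q^{\ell n})^d$ while $L_{2r-1}$ carries $\prod(1-q^{\ell n})^c(1-q^n)^d$), and to pass from one step to the next one multiplies by $\phi_\ell^{c}$ and $\phi_\ell^{d}$ alternately, with $\phi_\ell(z)=\eta(\ell^2 z)/\eta(z)$, before applying $U_\ell$; the identity $U_\ell\left(f(\tau)g(\ell\tau)\right)=g(\tau)U_\ell\left(f(\tau)\right)$ then pulls the new correction out. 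Your recursion $B_{r+1}=U_\ell(q^{-s_r}B_r)$ is correct as a statement about formal $q$-series, but with a single correction $h$ the function $h\cdot B_{r+1}$ is not the $U_\ell$-image of a modular object, so the finite-dimensional lattice argument cannot be closed. The alternation is not cosmetic: the per-step gain $\theta_\ell(\lambda,\mu)$ depends on whether the exponent in play is $c$ or $d$, and $A_r=\sum_{i=0}^{r-1}\theta_\ell(\lambda_i,\mu_i)$ with $\lambda_i$ alternating between $c$ and $d$ is exactly how the explicit value of $A_r$, and later the two-term $\alpha_\ell$, arise.

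The second and larger gap is that the engine of the proof --- the lower bounds on $v_\ell(M_{ij})$ --- is precisely what you defer, and no mechanism for proving it is indicated. In the paper these bounds come from the modular equation satisfied by $\phi_\ell(z)$ and $g_\ell(\ell z)$ over the function field of $X_0(\ell)$, combined with Newton's power-sum identities: these express $\ell\,U_\ell(\phi_\ell^r)$ as a Laurent polynomial in $g_\ell$ whose coefficients obey an explicit linear recursion, from which bounds such as $\pi_5(a^5_{r,p})\geq\lfloor(5p-r+1)/2\rfloor$ follow by induction on $r$. Handling all integers $c,d$ forces one to treat negative exponents of $\phi_\ell$, which requires dividing the modular equation by the top powers of $\phi_\ell$ and $g_\ell$ and rerunning the induction --- this is where the uniformity you flag as the hard part is actually secured, together with the periodicity $\theta_\ell(\lambda,\mu)=\theta_\ell(\lambda\pm\ell,\mu)$ that makes $A_r$ computable and eventually linear in $r$. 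Without this input, and without the corrected alternating structure above, the proposal is a plausible outline of the paper's argument rather than a proof.
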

Moreover, we obtain the following corollary, this is  similar to Gordon's Theorem 1.1 in \cite{GB}.
\begin{corollary}\label{C:1.3}
For $\ell=5,7,$ and $13$ and for any positive integer r,
\[
 p_{[1^c\ell^d]}(n)\equiv 0\pmod{\ell^{\frac{1}{2}\alpha_{\ell} r +\epsilon}},
 \]
 where $24n\equiv(c+\ell d)\pmod{\ell^r}$, $\epsilon=\epsilon(c,d)=O(log|c+\ell d|)$ and when $c+\ell d> 0$ , $\alpha_{\ell}$ 
 depends of the residue of $c+\ell d\pmod{24}$ which is shown in the following table,
 \begin{table}[htp]
\centering
\footnotesize\setlength{\tabcolsep}{2.3pt}
\begin{tabular}{l@{\hspace{6pt}} *{25}{c}}
\cmidrule(l){2-25}
& 1 & 2 & 3 & 4 & 5 & 6 & 7 & 8 & 9 & 10 & 11 & 12 & 13 & 14 & 15 & 16 & 17 & 18 & 19 & 20 & 21 & 22 & 23 & 24 \\
\midrule
\bfseries $\ell=5$
 & 2 & 1 & 1 & 1 & 2 & 2 & 1 & 1 & 1 & 1 & 1 & 0 & 0 & 0 & 1 & 1 & 0 & 0 & 0 & 1 & 1 & 0 & 0 & 0\\
\bfseries $\ell=7$
& 1 & 1 & 1 & 2 & 1 & 1 & 1 & 0 & 0 & 0 & 1 & 0 & 0 & 1 & 0 & 0 & 0 & 1 & 0 & 0 & 1 & 0 & 0 & 0\\
\bfseries $\ell=13$
& 0 & 0 & 0 & 0 & 0 & 0 & 0 & 1 & 0 & 0 & 0 & 0 & 0 & 0 & 0 & 0 & 0 & 0 & 0 & 0 & 0 & 0 & 0 & 0\\
\bottomrule
\addlinespace
\end{tabular}
\caption{Values of $\alpha_{\ell}$}\label{t:1.3}
\end{table}
\newline for ${c+\ell d<0}$, the entries in the last column need to changed to $1$ for $\ell=5,7$.
\end{corollary}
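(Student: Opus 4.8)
The plan is to deduce the corollary directly from Theorem~\ref{T:1.1} by first matching the two index conditions and then extracting the linear-in-$r$ growth of the explicit exponent $A_r$. For the first point, observe that since $\ell\geq 5$ is prime we have $\gcd(24,\ell)=1$, so $24$ is a unit modulo $\ell^r$ and the congruence $24n\equiv (c+\ell d)\pmod{\ell^r}$ pins down a single residue class $n\equiv n_r\pmod{\ell^r}$, where $24n_r\equiv (c+\ell d)\pmod{\ell^r}$. Hence every $n$ satisfying the hypothesis of the corollary is of the form $\ell^r m+n_r$, and Theorem~\ref{T:1.1} already yields $p_{[1^c\ell^d]}(n)\equiv 0\pmod{\ell^{A_r}}$. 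It therefore remains to prove the lower bound $A_r\geq \tfrac12\alpha_\ell r+\epsilon$, with $\alpha_\ell$ and $\epsilon$ as described.

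For the lower bound I would return to the $U_\ell$-iteration underlying the proof of Theorem~\ref{T:1.1}. For the primes $\ell=5,7,13$ the group $\Gamma_0(\ell)$ has genus $0$ (which is exactly why $\ell=11,17$ are excluded here), so its ring of modular functions holomorphic away from the cusps is $\mathbb{Z}$-spanned by powers of a single Hauptmodul $t=t_\ell$ built from $\eta(\tau)$ and $\eta(\ell\tau)$; this is the explicit basis referred to in the abstract. Writing the normalized generating function $\eta(\tau)^{-c}\eta(\ell\tau)^{-d}$ in this basis produces a finite expression whose ``size'' (the degree in $t$ together with the order of its principal part) grows linearly in $|c+\ell d|$. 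The operator $U_\ell$ carries this module into itself, and each application multiplies the relevant coefficients by a controlled power of $\ell$; tracking the minimal $\ell$-adic valuation through $r$ steps gives exactly the exponent $A_r$ of the theorem.

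The heart of the matter is to show that the valuation gained per step stabilizes to $\tfrac12\alpha_\ell$. I would set up the one-step recursion for the sieved functions $L_r$ and bound $v_\ell(L_{r+1})-v_\ell(L_r)$ from below by analyzing how $U_\ell$ acts on the leading term of the Hauptmodul expansion. The factor $\tfrac12$ reflects that the stable gain of $\ell^{\alpha_\ell}$ is realized over two consecutive applications of $U_\ell$; the value of $\alpha_\ell$ is governed by the orders of vanishing of $\eta(\tau)^{-c}\eta(\ell\tau)^{-d}$ at the two cusps of $\Gamma_0(\ell)$. Because these orders depend on $c+\ell d$ only through the factor $q^{-(c+\ell d)/24}$, their fractional parts, and hence $\alpha_\ell$, depend on $c+\ell d$ solely through its residue modulo $24$. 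Running this computation over the $24$ residue classes produces the table. The sign correction in the last column for $\ell=5,7$ arises because in the class $c+\ell d\equiv 0\pmod{24}$ the order at the cusp $\infty$ is an integer, so whether $c+\ell d$ is positive or negative (a zero or a pole there) changes the relevant floor by one. The additive term $\epsilon$ collects the bounded startup loss before the slope becomes steady: since the initial expansion has degree $O(|c+\ell d|)$, the number of iterations needed to reach the steady regime, together with the valuation deficit accumulated beforehand, are $O(\log|c+\ell d|)$, giving $\epsilon=O(\log|c+\ell d|)$.

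The main obstacle I anticipate is the uniform lower bound on the per-step valuation gain: one must show that the stable slope $\tfrac12\alpha_\ell$ is attained for every $r$ and is not eroded by the lower-order terms of the Hauptmodul expansion, which requires an induction controlling the full valuation profile of $L_r$ rather than just its leading coefficient. The case-by-case determination of the $24$ table entries, especially the sporadic $\alpha_{13}=1$ at residue $8$ where $13$ is otherwise not a congruence prime for the ordinary partition function, and the clean $O(\log|c+\ell d|)$ control of $\epsilon$, are then comparatively routine bookkeeping once the recursion is in hand.
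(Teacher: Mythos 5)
Your overall architecture matches the paper's: reduce to Theorem~\ref{T:1.1} (the observation that $24n\equiv c+\ell d\pmod{\ell^r}$ forces $n=\ell^r m+n_r$ is exactly the right first step), then show $A_{r}=\tfrac12\alpha_\ell r+O(\log|c+\ell d|)$ by noting that the per-step valuation gains $\theta_\ell(\lambda_i,\mu_i)$ become $2$-periodic once $i$ exceeds $\log_\ell|c+\ell d|$, since $\lambda_i$ alternates between $c$ and $d$ and $\mu_i$ stabilizes to $\lceil(c+\ell d)/24\rceil+\omega$ and $\lceil(\ell c+d)/24\rceil+\omega$ by \eqref{E:2.11}. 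That is precisely how the paper splits $A_{r,\ell}=\sum_{i=0}^{r-1}\theta_\ell(\lambda_i,\mu_i)$ into a transient of length $O(\log|c+\ell d|)$ plus $N_1+N_2$ copies of the two stable values whose sum is $\alpha_\ell$. Your identification of the factor $\tfrac12$ with the two-step period, and of the last-column correction with the jump in $\mu_r$ (the paper's $\omega_\ell(c,d)=1$ when $c+\ell d<0$ and $24\mid(c+\ell d)$), are both correct.

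The genuine gap is your justification of the mod-$24$ periodicity of $\alpha_\ell$. You assert that because the cusp orders of $\eta(\tau)^{-c}\eta(\ell\tau)^{-d}$ involve $q^{-(c+\ell d)/24}$, ``their fractional parts, and hence $\alpha_\ell$, depend on $c+\ell d$ solely through its residue modulo $24$.'' This does not follow: $\theta_\ell(\lambda,\mu)$ depends on $\lambda$ through its residue mod $\ell$ as well as on $\mu$, and a shift $c+\ell d\mapsto c+\ell d+24$ changes both arguments of $\theta_\ell(c,\lceil(c+\ell d)/24\rceil+\omega)$ simultaneously. The paper's proof needs \emph{both} relations of Lemmas~\ref{L:3.5}--\ref{L:3.7}, namely $\theta_\ell(\lambda,\mu)=\theta_\ell(\lambda\pm\ell,\mu)$ and $\theta_\ell(\lambda,\mu+1)=\theta_\ell(\lambda+s_\ell,\mu)$ with $s_\ell=6,4,2$, and the arithmetic identity $s_\ell(\ell-1)=24$, i.e.\ $24+s_\ell\equiv0\pmod\ell$; only then is $\alpha_\ell$ invariant under $c\mapsto c+24-\ell k$, $d\mapsto d+k$. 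Without that compatibility your argument would at best give periodicity modulo $24\ell$. Separately, you correctly flag as ``the main obstacle'' that the stable slope must not be eroded by lower-order terms of the expansion of $L_r$, but you do not resolve it; the paper disposes of this before the corollary via the valuation inequalities \eqref{E:a5}, \eqref{E:a7}, \eqref{E:a13} together with Lemma~\ref{L.3.6} and Remark~\ref{R.3.7}, which show the minimal $\ell$-adic valuation is attained at the extreme power of $g_\ell$, so that tracking the leading coefficient suffices. Both points would need to be supplied for the proposal to become a proof.
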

\begin{remark}
This is the same shape as Gordon's result for $k-$colored partitions, with k replaced by $c+\ell d$. See \cite{P} for a similar result for $\ell=11.$
\end{remark}
Similar to the prime 11 case, we can obtain the following result for $p_{[1^c17^d]}(n)$ using \cite{H}.
\begin{corollary}\label{C:1.5}
For any positive integer $r$, let $c,d$ be integers such that, $24n_r\equiv(c+17 d)\pmod{17^r}$. Then we have $p_{[1^c17^d]}(17^rm+n_r)\equiv 0\pmod{17^{\frac{1}{2}\alpha_{17} r +\epsilon}}$, where $\epsilon=\epsilon(c,d)=O(\log|c+17d|)$  and when $c+17d\geq 0$ , $\alpha_{17}$ depends on the residue of $c+17d\pmod{96}$ which is shown in table \ref{t:1.5}.
\begin{table}[htp]
\centering
\footnotesize\setlength{\tabcolsep}{2.3pt}
\begin{tabular}{l@{\hspace{6pt}} *{25}{c}}
\cmidrule(l){2-25}
& 1 & 2 & 3 & 4 & 5 & 6 & 7 & 8 & 9 & 10 & 11 & 12 & 13 & 14 & 15 & 16 & 17 & 18 & 19 & 20 & 21 & 22 & 23 & 24 \\
\midrule
\bfseries 0
 & 0 & 0 & 1 & 0 & 0 & 0 & 0 & 0 & 0 & 0 & 0 & 0 & 0 & 0 & 0 & 0 & 0 & 0 & 0 & 1 & 0 & 0 & 0 & 0\\
\bfseries 24
& 0 & 0 & 0 & 0 & 0 & 0 & 0 & 0 & 0 & 0 & 0 & 1 & 0 & 0 & 0 & 0 & 0 & 0 & 0 & 0 & 0 & 0 & 0 & 0\\
\bfseries 48
& 1 & 1 & 1 & 1 & 1 & 2 & 1 & 0 & 0 & 0 & 0 & 0 & 0 & 0 & 0 & 0 & 1 & 0 & 0 & 0 & 0 & 0 & 0 & 0\\
\bfseries 72
& 0 & 0 & 0 & 0 & 0 & 0 & 0 & 0 & 0 & 1 & 0 & 0 & 0 & 0 & 0 & 0 & 0 & 0 & 0 & 0 & 0 & 0 & 0 & 0\\

\bottomrule
\addlinespace
\end{tabular}
\caption{Values of $\alpha_{17}$}\label{t:1.5}
\end{table}

Here the entry is $\alpha(24i+j)$ where row labelled $24i$ and column labeled $j$. When $c+17d<0$, the last column must be changed to $0, 2, 0, 0$.
\end{corollary}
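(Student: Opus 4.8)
The plan is to derive this from Theorem~\ref{T:1.1} specialized to $\ell=17$, by identifying the explicit exponent $A_r$ with the stated form $\tfrac12\alpha_{17}r+\epsilon$ and importing the slope $\alpha_{17}$ from Kim Hughes' analysis of $k$-colored partitions in \cite{H}. The bridge between the eta-quotient $p_{[1^c17^d]}$ and a single-parameter family is the elementary congruence
\[
\prod_{n=1}^{\infty}\frac{1}{(1-q^n)^{c}(1-q^{17n})^{d}}\equiv\prod_{n=1}^{\infty}\frac{1}{(1-q^n)^{c+17d}}\pmod{17},
\]
coming from $(1-q^{17n})\equiv(1-q^n)^{17}\pmod{17}$. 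Writing $k:=c+17d$, this gives $p_{[1^c17^d]}(n)\equiv p_{[1^{k}]}(n)\pmod{17}$, so the $17$-adic behaviour is governed by $k$ alone, precisely as in the remark after Corollary~\ref{C:1.3}.

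First I would apply Theorem~\ref{T:1.1} to get $p_{[1^c17^d]}(17^rm+n_r)\equiv0\pmod{17^{A_r}}$ with an explicitly computable $A_r$; the content of the corollary is to extract the leading linear-in-$r$ term. As in the $\ell=11$ treatment of \cite{P}, the exponent grows because each application of the operator $U_{17}$ raises the $17$-adic valuation of the coefficients by a fixed increment once $r$ is large. That increment is the slope, and it is governed by the orders at the two cusps of $\Gamma_0(17)$ of the relevant eta-quotient; those orders depend on $k=c+17d$ only through its residue modulo $96=\operatorname{lcm}\bigl(24,2(17-1)\bigr)$, with the $24$ coming from the $\eta$-transformation law and the factor $2(\ell-1)$ from the cusp data at level $\ell$. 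The characteristic factor $\tfrac12$ is the usual one in Ramanujan-type bounds (compare the exponent $\lceil(j+2)/2\rceil$ for the classical prime $7$), so I expect $A_r=\tfrac12\alpha_{17}r+O(1)$ with $\alpha_{17}=\alpha_{17}(k\bmod 96)$ and the bounded remainder absorbed into $\epsilon$.

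Next I would fix the numerical slopes. Rather than recompute the $U_{17}$ eigendata, I would read off $\alpha_{17}$ from Hughes' explicit $17$-adic divisibility exponents for $p_{[1^k]}$ in \cite{H}, which already display the period-$96$ pattern of Table~\ref{t:1.5}; the mod-$17$ identification above, upgraded to the full modulus $17^{A_r}$ through Theorem~\ref{T:1.1}, transfers these slopes to $\alpha_{17}$ verbatim. For the intercept, $\epsilon(c,d)$ measures the cost of initializing the $U_{17}$-recursion; it is controlled by the number of $17$-adic digits of $|k|$, so $\epsilon=O(\log|c+17d|)$, in line with Hughes' bounds. When $c+17d<0$ the roles of zeros and poles at the cusps are exchanged, which is exactly what alters the final column of Table~\ref{t:1.5} to $0,2,0,0$, and the same filtration argument applies with the sign of the cusp order reversed.

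The main obstacle is proving that the slope is genuinely insensitive to $d$ and to the higher $17$-adic digits of $k$, i.e.\ that the leading coefficient of $A_r$ depends only on $k\bmod 96$: the mod-$17$ reduction controls only the first power of $17$, so I would have to show that the $U_{17}$-filtration stabilizes and that Hughes' single-variable computation captures the two-variable eta-quotient to the full exponent $A_r$, not merely modulo $17$. Pinning down the exact boundary entries of Table~\ref{t:1.5} — and the modified last column for negative $k$ — is the delicate endgame, since at these small residues the cusp-order estimates are nearly sharp and the rounding implicit in $\tfrac12\alpha_{17}r+\epsilon$ becomes decisive.
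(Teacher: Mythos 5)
Your central device --- reducing $p_{[1^c17^d]}$ to the one-parameter family $p_{[1^k]}$ with $k=c+17d$ via $(1-q^{17n})\equiv(1-q^n)^{17}\pmod{17}$ and then importing Hughes' exponents ``verbatim'' --- cannot prove the corollary, and you correctly flag this as the main obstacle without resolving it. The congruence you invoke holds only modulo the first power of $17$, whereas the statement asserts divisibility by $17^{\frac12\alpha_{17}r+\epsilon}$ with exponent growing linearly in $r$; no mod-$17$ identity can transfer divisibility by $17^2,17^3,\dots$ from Hughes' family to the two-parameter one. The paper never makes this reduction. It runs the two-variable iteration directly: $L_{r,17}=U_{17}\left(\phi_{17}^{\lambda_{r-1}}L_{r-1,17}\right)$ with $\lambda_i$ alternating between $c$ and $d$ as in \eqref{E:lr}, so that $A_r=\sum_{i=0}^{r-1}\theta_{17}(\lambda_i,\mu_i)$ as in \eqref{E:2.7}. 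Hughes' work enters only through the table of $\theta_{17}(\lambda,\mu)$ in Lemma \ref{L:3.9} --- the per-step divisibility gain of $U_{17}$ on his triangular basis for $\Gamma_0(17)$ --- not through his final divisibility exponents for $k$-colored partitions.

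Your account of the constants is also off the paper's track. The factor $\frac12$ arises because the indices $i$ in $\sum\theta_{17}(\lambda_i,\mu_i)$ split into roughly $r/2$ even ones (contributing $\theta_{17}(c,\cdot)$) and $r/2$ odd ones (contributing $\theta_{17}(d,\cdot)$) once $\mu_i$ has stabilized to the values in \eqref{E:2.11}; hence $\alpha_{17}$ is the sum \eqref{E:5.44} of two table entries, not an eigenvalue-type slope read off from cusp orders, and the analogy with $\lceil(j+2)/2\rceil$ for the prime $7$ is not the operative mechanism. Likewise the period $96$ is not obtained as $\operatorname{lcm}\bigl(24,2(17-1)\bigr)$ from transformation laws; it comes from the two invariances $\theta_{17}(\lambda,\mu)=\theta_{17}(\lambda-17,\mu)$ and $\theta_{17}(\lambda,\mu)=\theta_{17}(\lambda+6,\mu-4)$ of Lemma \ref{L:3.9}, which make $\alpha_{17}$ invariant under $c\mapsto c+96-17k$, $d\mapsto d+k$ and hence a function of $c+17d\bmod 96$; the four rows of Table \ref{t:3.9} are exactly why the period quadruples from $24$ to $96$. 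The altered last column for $c+17d<0$ is the effect of $\omega_{17}(c,d)=1$ shifting the $\mu$-arguments in \eqref{E:5.44}, not an exchange of zeros and poles at the cusps. To repair your argument you would have to abandon the mod-$17$ reduction and carry out the bookkeeping for the genuinely two-variable recursion, which is what the paper (following the $\ell=11$ case of \cite{P} and Theorem 3 of \cite{H}) does.
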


Even though Ramanujan's congruences and in general congruences between coefficients of modular forms were studied for so many years, a little attention has given to incongruences between them. Proving incongruences between modular forms has been an interesting problem in recent years. Garthwaite and Jameson recently proved incongruences between a large class of modular forms in \cite{GJ} using the $q-$ expansion principle developed by P. Deligne and M. Rapport. Here we use the expansions of generating modular functions for $p_{[1^c\ell^d]}(n)$ in terms of the explicit basis elements to determine incongruences.
\begin{corollary}
\label{C1.5} Let $A_r$ and $n_r$ be the integers mentioned in theorem \ref{T:1}.  If $A_r=0$ then for $\ell=5,7,$ and $13$, there is some integer $m$ such that,
\begin{equation}
    p_{[1^c\ell^d]}(\ell^r m+n_r)\not\equiv0\pmod{\ell}.
\end{equation}
 This result holds for $r=1$ when $\ell=11$ and $17$.
\end{corollary}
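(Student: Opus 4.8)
The plan is to reuse the basis expansion that underlies the proof of Theorem~\ref{T:1.1}, rather than to introduce any new modular-forms input. In that proof one attaches to the progression $\ell^r m + n_r$ a modular function $L_r$ on $\Gamma_0(\ell)$ whose $q$-expansion is, up to an explicit power-of-$q$ normalization, $\sum_{m\ge 0} p_{[1^c\ell^d]}(\ell^r m + n_r)\, q^{m}$, and one writes $L_r=\sum_i a_i f_i$ as a linear combination of the explicit basis $\{f_i\}$ of that space with coefficients $a_i\in\mathbb{Z}_{(\ell)}$. By construction $A_r=\min_i v_\ell(a_i)$, so the hypothesis $A_r=0$ says precisely that at least one coefficient $a_i$ is an $\ell$-adic unit.

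First I would reduce the identity $L_r=\sum_i a_i f_i$ modulo $\ell$. The point is that the explicit basis is in echelon (triangular) form: the elements $f_i$ have integer $q$-expansions with distinct leading exponents and leading coefficient equal to a unit mod $\ell$. Consequently their reductions $\bar f_i$ are linearly independent over $\mathbb{F}_\ell$, and $\bar L_r=\sum_i \bar a_i\,\bar f_i$ vanishes if and only if every $\bar a_i=0$. Since $A_r=0$ forces some $\bar a_i\neq0$, we obtain $\bar L_r\neq0$ in $\mathbb{F}_\ell[[q]]$.

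Next I would translate the nonvanishing of $\bar L_r$ back to the partition function. A nonzero power series in $\mathbb{F}_\ell[[q]]$ has some nonzero coefficient, say at $q^{m_0}$; undoing the power-of-$q$ normalization, this coefficient is exactly $p_{[1^c\ell^d]}(\ell^r m_0 + n_r)\bmod\ell$, which therefore is nonzero. This yields the desired index $m=m_0$ with $p_{[1^c\ell^d]}(\ell^r m + n_r)\not\equiv0\pmod{\ell}$. For $\ell=11,17$ the arbitrary-$r$ congruence in Theorem~\ref{T:1.1} is obtained indirectly through the results of \cite{GB} and \cite{H} rather than through a direct basis expansion at level $r$, so the explicit expansion $L_r=\sum_i a_i f_i$ is available to us only at $r=1$; this is why the statement is restricted to $r=1$ for those two primes.

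The main obstacle is the linear independence of the reduced basis $\{\bar f_i\}$ modulo $\ell$: one must guarantee that no cancellation among the $f_i$ occurs after reduction, for otherwise $\bar L_r$ could vanish even though some $a_i$ is a unit. This is exactly where the echelon structure of the chosen basis—distinct orders of vanishing together with unit leading coefficients—is indispensable, and verifying that these leading coefficients are units mod $\ell$ is the step that requires the explicit description of the basis from the proof of Theorem~\ref{T:1.1}. A minor additional point is to confirm that the nonzero reduced coefficient occurs at a $q$-power corresponding to a genuine term of the progression, i.e.\ at a nonnegative index $m_0$, which follows from the explicit normalization used to define $L_r$.
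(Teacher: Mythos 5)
Your overall architecture (expand $L_r$ in a triangular basis, reduce mod $\ell$, use linear independence of the reduced basis elements, then invert the prefactor) matches the paper's, but there is a genuine gap at the one step that carries all the content: the claim that ``by construction $A_r=\min_i v_\ell(a_i)$.'' In the paper $A_r$ is \emph{not} defined as the minimal valuation of the coefficients of $L_r$; it is defined combinatorially by \eqref{E:2.7} as $\sum_{i=0}^{r-1}\theta_\ell(\lambda_i,\mu_i)$, and Theorem \ref{T:1.1} establishes only the one-sided inequality $\pi_\ell(L_{r,\ell})\ge A_r$. The corollary is precisely the assertion that this bound is sharp when $A_r=0$, so you cannot assume the equality. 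For $r=1$ your reading happens to be correct, because $\theta_\ell(\lambda_0,\mu_0)=0$ means by definition that some $C^{\lambda_0}_{\mu_0,\nu}$ is an $\ell$-adic unit, and this is exactly how the paper handles $\ell=11,17$. But for $r\ge 2$ (which is the substance of the case $\ell=5,7,13$) the coefficient of $g_\ell^{\nu_r}$ in $L_{r,\ell}$ is a sum $\sum C^{\lambda_0}_{\mu_0,\nu_1}C^{\lambda_1}_{\nu_1,\nu_2}\cdots C^{\lambda_{r-1}}_{\nu_{r-1},\nu_r}$ over paths, and nothing in your argument rules out that every such sum is divisible by $\ell$ even though each individual matrix has a unit entry somewhere in the relevant row.

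The paper closes this gap by isolating the extremal coefficient: by triangularity, the coefficient of $g_\ell^{\mu_r}$ in $L_{r,\ell}$ is the single product $C^{\lambda_0}_{\mu_0,\mu_1}C^{\lambda_1}_{\mu_1,\mu_2}\cdots C^{\lambda_{r-1}}_{\mu_{r-1},\mu_r}$ of corner entries, and Lemma \ref{L.3.6} together with Remark \ref{R.3.7} shows that the minimal $\ell$-adic valuation in each row of the matrix of $T_\lambda$ is attained at that corner entry; hence $\theta_\ell(\lambda_j,\mu_j)=0$ forces each factor $C^{\lambda_j}_{\mu_j,\mu_{j+1}}$ to be a unit, and so is the product. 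That is the missing ingredient, and it is also why you have misidentified the ``main obstacle'': the mod-$\ell$ linear independence of $\{g_\ell^{\nu}\}$ is immediate (distinct leading exponents with leading coefficient $1$), whereas exhibiting a unit coefficient is the hard part. A smaller inaccuracy: $L_r$ is not the progression generating function times a power of $q$ but times the eta-quotient $\prod_{n\ge1}(1-q^n)^{c}(1-q^{\ell n})^{d}$ (with $c,d$ interchanged for odd $r$); the conclusion survives because this prefactor has constant term $1$ and is invertible in $\mathbb{F}_\ell[[q]]$, but that is the fact you must invoke, not a $q$-shift.
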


\begin{remark}
For $\ell=11$, we expect the incongruences should be hold for all positive integer $r$ since $A_r$ is the best possible bound as shown in \cite{GB}, but we do not prove it in this paper.
\end{remark}

In $1984$s George Andrews in \cite{AG} introduced the $k$-colored generalized frobenius partitions $C\Phi_k(q)$. 
 He showed that these partitions can be defined using the following generating function.
 \begin{equation*}
   \begin{aligned}
C\Phi_k(q):=&\prod_{n=1}^{\infty}\frac{1}{(1-q^n)^k}\sum_{m_1,m_2,\cdots ,m_k\in \mathbb{Z}}q^{Q(m_1,m_2,\cdots,m_k)}=\sum_{n=0}^{\infty}c\phi_k(n)q^n,\\
&\text{where}\;\;Q(m_1,m_2,\cdots,m_k)=\sum_{i=1}^{k-1}m_i^2+\sum_{1\leq i \leq j\leq k-1}m_im_j.
   \end{aligned}  
 \end{equation*}
 See \cite{GS} and \cite{KL} for interesting studies about this partition function. In $2018$ Chan, Wang, and Yang in \cite{CWY} studied this partitions using the theory of modular forms and derived new representations. Using their work and theorem \ref{T:1.1} we proved the congruences for $c\phi_k(n)$ for $k=5,7,$ and $11$.

    \begin{corollary}\label{C:1.7}
    For each non negative integer $r$ and for all $m$ we have,
   \begin{equation}\label{E:1.6} c\phi_5\left(5^{2r}m+\frac{19\cdot5^{2r}+5}{24}\right)\equiv 0 \pmod{5^{2r-1}}.
   \end{equation}

   \begin{equation}\label{E:1.7}
   c\phi_{7}\left(7^{2r}m+\frac{17\cdot 7^{2r}+7}{24}\right)\equiv 0\pmod{7^{r}}.
   \end{equation}

   \begin{equation}\label{E:1.8}
   c\phi_{11}\left(11^{2r}m-\frac{ 11^{2r+1}-11}{24}\right)\equiv 0\pmod{11^{2r-1}}.
   \end{equation}
    \end{corollary}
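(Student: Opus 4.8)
The plan is to reduce the Frobenius partition function $c\phi_\ell(n)$ (with $k=\ell$ prime) to the eta-quotient partition function $p_{[1^c\ell^d]}(n)$ and then quote Theorem \ref{T:1.1}. Writing $q=e^{2\pi i\tau}$, the defining product gives $C\Phi_\ell(q)=\Theta(q)\prod_{n\ge1}(1-q^n)^{-\ell}$, where $\Theta(q)=\sum_{\vec m}q^{Q(\vec m)}$ is the theta function of the quadratic form $Q$ in $\ell-1$ variables. The first step is to recognize $Q$ as the norm form of the root lattice $A_{\ell-1}$: setting $m_\ell=-(m_1+\cdots+m_{\ell-1})$ one checks that $Q=\tfrac12\sum_{i=1}^{\ell}m_i^2$ on the hyperplane $\sum_i m_i=0$, so $\Theta(q)=\sum_{\sum m_i=0}q^{\frac12\sum m_i^2}$.

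The key arithmetic input is that the cyclic shift $(m_1,\dots,m_\ell)\mapsto(m_\ell,m_1,\dots,m_{\ell-1})$ is an order-$\ell$ isometry of $A_{\ell-1}$ whose only fixed point is the origin, a constant sum-zero vector being forced to vanish. Hence it acts freely on the nonzero lattice vectors, every representation number $r_Q(n)$ with $n\ge1$ is a multiple of $\ell$, and $\Theta(q)\equiv1\pmod{\ell}$. Combined with the elementary congruence $\prod_{n\ge1}(1-q^n)^{\ell}\equiv\prod_{n\ge1}(1-q^{\ell n})\pmod{\ell}$, this already yields $C\Phi_\ell(q)\equiv\prod_{n\ge1}(1-q^{\ell n})^{-1}\pmod{\ell}$; equivalently $c\phi_\ell(\ell n)\equiv p(n)$ and $c\phi_\ell(m)\equiv0$ for $\ell\nmid m$, so the mod-$\ell$ shadow of $c\phi_\ell$ is the eta-quotient $p_{[1^0\ell^1]}$. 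To reach the much stronger moduli $\ell^{2r-1}$ (resp. $\ell^{r}$) I would invoke the Chan--Wang--Yang modular representation \cite{CWY}, which I expect upgrades this shadow to an exact identity writing $C\Phi_\ell(q)$ as a $\mathbb{Z}[\ell^{-1}]$-combination of eta-quotients of type $p_{[1^c\ell^d]}$ with $c+\ell d=\ell$, with the $\ell$-adic valuation of the scalar coefficients under control.

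With that identity in hand, the third step is to match the arithmetic progressions. For $\ell=5$ the offset is $\frac{19\cdot5^{2r}+5}{24}$; since $5^{2r}\equiv1\pmod{24}$ this is an integer, it is divisible by $5$ exactly once, and $24$ times it is $\equiv5\pmod{5^{2r}}$, which is precisely the condition $24n_{2r}\equiv c+\ell d\pmod{\ell^{2r}}$ of Theorem \ref{T:1.1} with $(c,d)=(0,1)$. The analogous computations give $24n\equiv7\pmod{7^{2r}}$ for $\ell=7$ and $24n\equiv11\pmod{11^{2r}}$ for $\ell=11$, in each case with $c+\ell d=\ell$. Applying Theorem \ref{T:1.1} (equivalently the exponent $\tfrac12\alpha_\ell r+\epsilon$ of Corollary \ref{C:1.3}) with $2r$ in place of $r$, so that the AP-modulus is $\ell^{2r}$, then produces divisibility by $\ell^{A_{2r}}$; the tabulated values $\alpha_5=2$, $\alpha_7=1$ at the relevant residue, together with the value $\alpha_{11}=2$ from \cite{P}, give the exponents $2r-1,\;r,\;2r-1$ claimed in \eqref{E:1.6}--\eqref{E:1.8}.

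The main obstacle is the second step: passing from the elementary statement $\Theta\equiv1\pmod{\ell}$ to a representation valid modulo high powers of $\ell$. This is exactly where the modularity of $C\Phi_\ell$ must be used, and I expect the delicate points to be (i) verifying that every eta-quotient occurring in the Chan--Wang--Yang expansion carries $c+\ell d=\ell$ (so that all of them feed the same residue in Corollary \ref{C:1.3}), and (ii) keeping careful track of the $\ell$-adic valuation of each scalar coefficient, since the exponent in \eqref{E:1.6}--\eqref{E:1.8} is governed by the minimum of $v_\ell(a_i)+A_{2r}(c_i,d_i)$ over the terms. It is this interaction that accounts for the loss of a single power of $\ell$ (the $2r-1$ rather than $2r$) when $\ell=5,11$. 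The remaining verifications—integrality of the offsets and the residue computations feeding Corollary \ref{C:1.3}—are routine once the representation is fixed.
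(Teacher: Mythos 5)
Your overall strategy --- substitute the Chan--Wang--Yang representations of $C\Phi_k(q)$ and feed each resulting eta-quotient into Theorem \ref{T:1.1} --- is exactly the paper's route, and your opening paragraph (the $A_{\ell-1}$-lattice argument giving $\Theta\equiv 1\pmod{\ell}$) is correct but superfluous: it only yields information modulo $\ell$ and plays no role once the exact identities \eqref{E:1.2}--\eqref{E:1.4} are in hand. However, one of your guiding expectations is false, and the step you defer is where the proof actually lives. The CWY terms do \emph{not} all have $c+\ell d=\ell$: for $\ell=5$ the second term $25q\prod(1-q^{5n})^5(1-q^n)^{-6}$ is $p_{[1^65^{-5}]}$ with $c+5d=-19$, and for $\ell=7$ the extra terms are $(c,d)=(4,-3)$ and $(8,-7)$ with $c+7d=-17$ and $-41$. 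What rescues the argument is the prefactor $q^{j}$: the $j$-th term is evaluated at $n-j$, and if $24n\equiv\ell\pmod{\ell^{2r}}$ then $24(n-j)\equiv\ell-24j=c_j+\ell d_j\pmod{\ell^{2r}}$, so each term separately satisfies the hypothesis of Theorem \ref{T:1.1} for its own pair $(c_j,d_j)$; the paper checks this via \eqref{E:n_r}.

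Second, you cannot extract the precise exponents $2r-1$, $r$, $2r-1$ from Corollary \ref{C:1.3}: its exponent is only $\tfrac12\alpha_\ell r+O(\log|c+\ell d|)$, and the unspecified error term swallows exactly the precision you need. The paper instead computes $A_{2r}(c_j,d_j)=\sum_{i=0}^{2r-1}\theta_\ell(\lambda_i,\mu_i)$ exactly, by determining the sequences $\lambda_i$ (alternating between $c_j$ and $d_j$) and $\mu_i$ from \eqref{E:mur} and reading the values off Tables \ref{t:3.5}--\ref{t:3.6}; for $\ell=5$ this gives $A_{2r}(0,1)=2r-1$ and $A_{2r}(6,-5)=2r$, so the governing quantity $\min_j\{v_\ell(a_j)+A_{2r}(c_j,d_j)\}$ equals $2r-1$ and is attained on the \emph{first} term. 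In particular the ``loss of one power'' for $\ell=5$ and $11$ is not an interaction effect between terms as you suggest; it is simply the value $A_{2r}(0,1)=2r-1$ for the leading term $p_{[1^0\ell^1]}$. Until these $A_{2r}$ computations are carried out, your argument is a plan rather than a proof.
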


   We also use corollary \ref{C:1.5} to prove an incongruence for $k=13$.
   \begin{corollary} For each each positive integer $r$ and for some integer $m$, we have,
    \begin{equation}\label{E:1.9}
   c\phi_{13}\bigg(13^{2r}m-\frac{13-13^{2r+1}}{24}\bigg)\not\equiv 0\pmod{13}.
   \end{equation}
   \end{corollary}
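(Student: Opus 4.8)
The plan is to run the same reduction that produces the Frobenius congruences of Corollary~\ref{C:1.7}, but to feed its output into the incongruence Corollary~\ref{C1.5} rather than into the congruence Theorem~\ref{T:1.1}. First I would invoke the representation of $C\Phi_{13}(q)$ from \cite{CWY}: writing $C\Phi_{13}(q)=\frac{1}{(q;q)_\infty^{13}}\,\Theta(q)$ with $\Theta(q)=\sum_{\vec m}q^{Q(\vec m)}$ the theta series of the Frobenius form $Q$, and reducing modulo $13$ via $(q;q)_\infty^{13}\equiv(q^{13};q^{13})_\infty\pmod{13}$, one lands in the span of the eta-quotients of type $p_{[1^c13^d]}$. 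Applying the same $U_{13^2}$-type dissection used for $\ell=5,7,11$, the goal is to produce a congruence of the shape
\[
c\phi_{13}\!\left(13^{2r}m-\tfrac{13-13^{2r+1}}{24}\right)\equiv\lambda\,p_{[1^{c}13^{d}]}\!\left(13^{2r}m+n_{2r}\right)\pmod{13},
\]
with $\lambda\not\equiv0\pmod{13}$ and with $c,d$ the parameters coming out of the reduction.

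Next I would pin down the arithmetic progression. Since
\[
24\Big(13^{2r}m-\tfrac{13-13^{2r+1}}{24}\Big)=13^{2r}(24m+13)-13,
\]
the Frobenius argument satisfies $24N\equiv-13\pmod{13^{2r}}$, so it matches the residue class $24n_{2r}\equiv c+13d\pmod{13^{2r}}$ of Theorem~\ref{T:1.1} with $c+13d\equiv-13$, that is, residue $-13\equiv 11\pmod{24}$. Reading the $\ell=13$ row of Table~\ref{t:1.3} at column $11$ gives $\alpha_{13}=0$, and the explicit exponent $A_{2r}$ of Theorem~\ref{T:1.1} is therefore $0$ in this case (consistent with $\alpha_{13}=0$ and the small value of $|c+13d|$, so that no correction term survives). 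This is precisely the situation excluded from Corollary~\ref{C:1.7}: for $\ell=5,7,11$ the analogous residue carries $\alpha_\ell>0$ and yields a genuine congruence, whereas for $\ell=13$ the vanishing of $\alpha_{13}$ leaves no divisibility to exploit and instead signals an incongruence.

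Finally, because $A_{2r}=0$, Corollary~\ref{C1.5} applies, and for $\ell=13$ it is valid for every positive integer, in particular at the exponent $2r$; hence there is some integer $m$ with $p_{[1^{c}13^{d}]}(13^{2r}m+n_{2r})\not\equiv0\pmod{13}$. Transporting this nonvanishing across the unit scalar $\lambda$ in the displayed congruence produces an $m$ with $c\phi_{13}\!\big(13^{2r}m-\tfrac{13-13^{2r+1}}{24}\big)\not\equiv0\pmod{13}$, as required.

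The main obstacle is the first step: carrying out the \cite{CWY} reduction so that the $13^{2r}$-progression of $c\phi_{13}$ is identified, modulo $13$, with a \emph{unit} multiple of a single progression of one $p_{[1^{c}13^{d}]}$, with the two progressions matching exactly so that the ``some $m$'' furnished on the partition side can be pulled back to the Frobenius side. A secondary technical point is verifying that $A_{2r}=0$ holds as the exact exponent of Theorem~\ref{T:1.1}, and not merely that $\alpha_{13}=0$ asymptotically; once both are in hand, the conclusion follows immediately from Corollary~\ref{C1.5}.
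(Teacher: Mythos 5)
Your overall strategy --- reduce $C\Phi_{13}$ modulo $13$ to a single $p_{[1^c13^d]}$ progression, verify $A_{2r}=0$, and invoke Corollary \ref{C1.5} --- is exactly the paper's route, but both steps you flag as ``obstacles'' are precisely the content of the proof, and the way you propose to handle the first one would not work. Reducing $(q;q)_\infty^{13}\equiv(q^{13};q^{13})_\infty\pmod{13}$ in the definition $C\Phi_{13}(q)=\Theta(q)/(q;q)_\infty^{13}$ leaves you with $\Theta(q)/(q^{13};q^{13})_\infty$, which still carries the theta series and is not one of the eta-quotients $p_{[1^c13^d]}$; no further $U$-operator dissection of $C\Phi_{13}$ is performed in the paper. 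What the paper actually uses is the Chan--Wang--Yang identity \eqref{E:1.5}, in which the second summand carries a factor $13$ and the third a factor $26$, so that modulo $13$ the whole series collapses to $1/(q^{13};q^{13})_\infty=\sum_n p_{[1^013^1]}(n)q^n$, i.e.\ $(c,d)=(0,1)$ and your unit $\lambda$ is just $1$. Without this identity, or an equivalent fact about $\Theta(q)$ modulo $13$, the argument does not get off the ground.

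The second gap is the claim that $A_{2r}=0$. Reading $\alpha_{13}=0$ off Table \ref{t:1.3} only gives $A_r=\tfrac12\alpha_{13}r+O(\log|c+13d|)$, i.e.\ $A_r$ bounded, not zero; Corollary \ref{C1.5} needs $A_r=0$ exactly. The paper gets this by computing, for $(c,d)=(0,1)$, the sequences $\lambda_i$ (alternating $0$ and $1$) and $\mu_i$ from \eqref{E:murR}, and checking from Table \ref{t:3.7} together with the shift relations of Lemma \ref{L:3.7} that every $\theta_{13}(\lambda_i,\mu_i)$ vanishes, so that $A_r=\sum_i\theta_{13}(\lambda_i,\mu_i)=0$ for every $r$. (Your residue computation $c+13d\equiv 11\pmod{24}$ also disagrees with the paper's $c+13d=13$; this traces to the sign of the offset in \eqref{E:1.9} versus $n_{2r}(0,1)=\frac{13-13^{2r+1}}{24}$, and while both columns of Table \ref{t:1.3} happen to give $\alpha_{13}=0$, it signals that you have not actually pinned down the progression to a concrete $(c,d)$.) Once these two points are supplied, the rest of your argument coincides with the paper's.
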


To prove these results, we use modular equations for small primes. In section \ref{ME}, we explain  how these equations are proved.\\

In section \ref{AM}, we use modular equations to derive formulas for important modular functions that we have defined in \eqref{E.Sr}. Here we defined these functions more general way compared to \cite{K}. These equations were obtained by using the modified versions of modular equations.\\

In section \ref{CMF}, we construct modular functions $\Gamma_0(\ell)$ such that the part of these functions are the generating functions for the partition function $p_{[1^c\ell^d]}(n)$. \\

In section \ref{P}, the proofs of the theorem \ref{T:1.1} and corollaries  \ref{C:1.3}, \ref{C:1.5} are given. These proofs are similar to the proofs given in \cite{P}, however here, we give a new proof for incongruence \eqref{E:1.9}.\\

In the last section, we use theorem \ref{T:1.1} to prove congruences and incongruences for $\ell-$regular partitions and $\ell-$core partitions for primes $5,7, 11, 13$ and $17$.

\section{Modular forms and modular equations}\label{ME}

Modular forms have played a major role in modern number theory, including Wiles' celebrated proof of Fermat's Last Theorem.  Modular forms are also appear in other areas such as algebraic topology, sphere packing and string theory. See \cite{DS} and \cite{KN} for basic definitions, theories and applications.\\

Modular forms have also been used extensively to prove partition congruences. This is due to the generating functions of  partitions in some cases are modular forms or can be related to modular forms. See \cite{GF} for Garvan's work on  proving congruences using modular forms for a special type of partitions modulo $5,7,$ and $13$. \\

There are some available tools to construct modular forms or functions explicitly. One of them is the Dedekind eta function, $\eta(z)$. This is a weight $1/2$ modular form with a certain multiplier system on $SL_2(\mathbb{Z})$ and it is closely related to the generating function of the partition function.
See chapter $4$, \cite{K} for more details.
\[\eta(z):=q^{\frac{1}{24}}\prod_{n=1}^{\infty}(1-q^n).\]
  Now we use the Dedekind eta function to define two modular functions which play a significant role in our work.   Let $g_{\ell,r}(z)$ be defined in $\mathbb{H}$ by
\[g_{\ell,r}(z) :=\left\{\frac{\eta(\ell z)}{\eta (z)}\right\}^r.\]
\begin{theorem}[Theorem $1$, chapter $7$, \cite{K}]

If $\ell$ is a prime greater than 3 and $r$ is an integer such that  $r(\ell -1)\equiv 0\pmod{24}$, then $g_{\ell,r}(z)$ is a modular function on $\Gamma_0(\ell).$
\end{theorem}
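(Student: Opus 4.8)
The plan is to check the three defining properties of a modular function on $\Gamma_0(\ell)$ for the weight-$0$ eta-quotient $g_{\ell,r}$: holomorphy on $\mathbb{H}$, the correct transformation behaviour under $\Gamma_0(\ell)$, and meromorphy at the cusps. Holomorphy is immediate, since $\eta(z)=q^{1/24}\prod_{n\ge1}(1-q^{n})$ is holomorphic and nowhere-vanishing on $\mathbb{H}$, so the quotient $\eta(\ell z)/\eta(z)$ and each of its powers is holomorphic and non-vanishing there. The real content lies in the transformation law and the cusp analysis, and in both places the hypothesis $r(\ell-1)\equiv 0\pmod{24}$ is exactly what is required.

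For the transformation law I would use the explicit formula $\eta(\gamma z)=v_\eta(\gamma)\,(cz+d)^{1/2}\eta(z)$ for $\gamma=\left(\begin{smallmatrix}a&b\\c&d\end{smallmatrix}\right)\in SL_2(\mathbb{Z})$, with $v_\eta(\gamma)$ the $24$th-root-of-unity multiplier given through Dedekind sums. The key device is a conjugation trick: for $\gamma\in\Gamma_0(\ell)$ write $c=\ell c_0$ and set $\gamma_0=\left(\begin{smallmatrix}a&\ell b\\ c_0&d\end{smallmatrix}\right)$, so that $\det\gamma_0=ad-bc=1$ and a direct computation gives $\ell\,\gamma z=\gamma_0(\ell z)$. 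Applying the formula to $\eta(\ell\gamma z)=\eta(\gamma_0(\ell z))$ and to $\eta(\gamma z)$, and observing that the automorphy factors $(c_0(\ell z)+d)^{1/2}$ and $(cz+d)^{1/2}$ are equal complex numbers, so that they cancel exactly (branch included), I get $g_{\ell,r}(\gamma z)=\big(v_\eta(\gamma_0)/v_\eta(\gamma)\big)^{r}g_{\ell,r}(z)$.

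The main obstacle is then to analyse this multiplier ratio. Writing $v_\eta$ through Dedekind sums, the ratio collapses to $\exp\!\big(\pi i\,[\tfrac{(a+d)(\ell-1)}{12c}+s(d,c)-s(d,c_0)]\big)$, and I must show its $r$-th power is a valid multiplier system for a weight-$0$ form. The plan is to combine the congruence $r(\ell-1)\equiv 0\pmod{24}$ with Dedekind reciprocity and the integrality $6c\,s(d,c)\in\mathbb{Z}$ to see that, after raising to the $r$-th power, the phase reduces to the quadratic character $d\mapsto\big(\tfrac{\ell^{r}}{d}\big)$ of $\Gamma_0(\ell)$ (which is trivial exactly when $r$ is even). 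The translation $T=\left(\begin{smallmatrix}1&1\\0&1\end{smallmatrix}\right)$ is a clean warm-up: $\eta(z+1)=e^{\pi i/12}\eta(z)$ gives $g_{\ell,r}(z+1)=e^{\pi i r(\ell-1)/12}g_{\ell,r}(z)$, and this factor is $1$ precisely because $r(\ell-1)\equiv 0\pmod{24}$. Controlling the Dedekind-sum phase for a general $\gamma$ is where the bookkeeping concentrates.

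Finally I would handle the two cusps of $\Gamma_0(\ell)$, namely $\infty$ of width $1$ and $0$ of width $\ell$. At $\infty$ the product expansion gives $g_{\ell,r}(z)=q^{r(\ell-1)/24}\prod_{n\ge1}\big(\tfrac{1-q^{\ell n}}{1-q^{n}}\big)^{r}$, a power series in $q$ with leading exponent $r(\ell-1)/24$, which is an integer by hypothesis, so $g_{\ell,r}$ has a zero of integral order there. At $0$ I would apply $S=\left(\begin{smallmatrix}0&-1\\1&0\end{smallmatrix}\right)$ and use $\eta(-1/z)=\sqrt{-iz}\,\eta(z)$ to get $g_{\ell,r}(-1/z)=\ell^{-r/2}\big(\eta(z/\ell)/\eta(z)\big)^{r}$, whose expansion in the local parameter $q^{1/\ell}$ begins with exponent $r(1-\ell)/24$, again an integer by hypothesis. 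With holomorphy on $\mathbb{H}$, the $\Gamma_0(\ell)$-transformation law, and meromorphy at both cusps established, $g_{\ell,r}$ is a modular function on $\Gamma_0(\ell)$.
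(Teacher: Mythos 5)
The paper does not actually prove this statement---it is quoted verbatim from Knopp's book---so there is no in-paper argument to measure you against. Your outline follows the standard (and Knopp's) route, and most of it is sound: holomorphy on $\mathbb{H}$ from the non-vanishing of $\eta$; the conjugation $\ell\gamma z=\gamma_0(\ell z)$ with $\gamma_0=\left(\begin{smallmatrix}a&\ell b\\ c_0&d\end{smallmatrix}\right)$ and the exact cancellation of the automorphy factors since $c_0(\ell z)+d=cz+d$; and the cusp expansions with leading exponents $r(\ell-1)/24$ at $\infty$ and $r(1-\ell)/24$ at $0$, integral by hypothesis. Your formula for the multiplier ratio, $\exp\bigl(\pi i\bigl[\tfrac{(a+d)(\ell-1)}{12c}+s(d,c)-s(d,c_0)\bigr]\bigr)$, is also correct.

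The gap sits exactly at the step you wave at: you reduce the $r$-th power of that ratio to the character $d\mapsto\left(\tfrac{\ell^{r}}{d}\right)=\left(\tfrac{\ell}{d}\right)^{r}$ and then note parenthetically that this is trivial ``exactly when $r$ is even''---but the hypothesis $r(\ell-1)\equiv0\pmod{24}$ does not force $r$ to be even. When $\ell\equiv1\pmod 8$ the least admissible $r$ is odd, and $\ell=17$, $r=3$ is precisely the function $g_{17}=g_{17,3}$ this paper goes on to use. In that case the residual character really is nontrivial: for $\gamma=\left(\begin{smallmatrix}6&1\\17&3\end{smallmatrix}\right)\in\Gamma_0(17)$ and $\gamma_0=\left(\begin{smallmatrix}6&17\\1&3\end{smallmatrix}\right)$ one has $s(3,17)=\tfrac{5}{17}$, so your expression evaluates to $\exp\bigl(\pi i\bigl[\tfrac{9\cdot16}{12\cdot17}+\tfrac{5}{17}\bigr]\bigr)=e^{\pi i}=-1$, and hence $\eta(17z)/\eta(z)$---and therefore its cube---changes sign under $\gamma$. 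So the plan, carried out as described, establishes the theorem only for even $r$; for odd $r$ it establishes invariance only up to the character $\left(\tfrac{\ell}{d}\right)$, i.e.\ on an index-two subgroup of $\Gamma_0(\ell)$. To close the proof you must either show this character vanishes (it does not) or add a hypothesis (e.g.\ $r$ even, which covers $\ell=5,7,11,13$ but not the $\ell=17$ case as set up here). As written, the central transformation step of the proposal does not go through in the stated generality.
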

Let $\ell$ be a prime greater than $3$, let $\phi_{\ell}(z)$ be defined in $\mathbb{H}$ by
\[\phi_{\ell}(z) :=\frac{\eta(\ell^2 z)}{\eta (z)}.\]
\begin{theorem}[Theorem $2$, Chapter $7$, \cite{K}]

 If $\ell$ is a prime greater than $3$ then $\phi_{\ell}(z)$ is a modular function on $\Gamma_0(\ell^2).$
\end{theorem}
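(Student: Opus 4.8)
The plan is to establish the two requirements for $\phi_\ell$ to be a modular function on $\Gamma_0(\ell^2)$: invariance of weight $0$ under the group, and meromorphy at every cusp. Both follow from the transformation law of the Dedekind eta function, so I would begin by recalling it in the form $\eta(\gamma z) = \varepsilon(\gamma)\,(cz+d)^{1/2}\eta(z)$ for $\gamma = \begin{pmatrix} a & b \\ c & d \end{pmatrix} \in SL_2(\mathbb{Z})$ with $c>0$, where $\varepsilon(\gamma) = \exp\!\big(\pi i(\tfrac{a+d}{12c} - s(d,c) - \tfrac14)\big)$ and $s(d,c)$ is the Dedekind sum.

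The key algebraic observation is that if $\gamma = \begin{pmatrix} a & b \\ c & d\end{pmatrix} \in \Gamma_0(\ell^2)$, so that $\ell^2 \mid c$, then $\ell^2\gamma z = \gamma'(\ell^2 z)$ with $\gamma' = \begin{pmatrix} a & \ell^2 b \\ c/\ell^2 & d\end{pmatrix}$, and $\det\gamma' = ad - bc = 1$, so $\gamma' \in SL_2(\mathbb{Z})$. Applying the eta law to the numerator $\eta(\ell^2\gamma z) = \eta(\gamma'(\ell^2 z))$ and to the denominator $\eta(\gamma z)$, and noting that $(c/\ell^2)(\ell^2 z)+d = cz+d$, the two automorphy factors $(cz+d)^{1/2}$ cancel exactly. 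This confirms the weight is $0$ and reduces invariance to the multiplier identity $\varepsilon(\gamma') = \varepsilon(\gamma)$, i.e. to showing that
\[
\frac{(a+d)(\ell^2-1)}{12c} - s(d,\,c/\ell^2) + s(d,c) \in 2\mathbb{Z}.
\]

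I expect this multiplier comparison to be the main obstacle. It is exactly here that the hypothesis that $\ell>3$ is prime enters: since $\ell$ is coprime to $6$ we have $\ell^2 \equiv 1 \pmod{24}$, which controls the first term, while the Dedekind-sum difference is handled by reciprocity together with the divisibility $\ell^2 \mid c$. Equivalently, one can bypass the explicit Dedekind-sum manipulation by verifying the Ligozat--Newman criteria for the eta-quotient $\prod_{\delta \mid \ell^2}\eta(\delta z)^{r_\delta}$ with $r_1 = -1$ and $r_{\ell^2}=1$: the weight $\tfrac12\sum_\delta r_\delta = 0$, the two sums $\sum_\delta \delta r_\delta = \ell^2-1$ and $\sum_\delta (\ell^2/\delta)\,r_\delta = 1-\ell^2$ are both $\equiv 0 \pmod{24}$, and $\prod_\delta \delta^{r_\delta} = \ell^2$ is a perfect square, so the associated character is trivial; all of these hold precisely because $\ell^2 \equiv 1 \pmod{24}$. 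Finally, since $\eta$ is holomorphic and nonvanishing on $\mathbb{H}$, the quotient $\phi_\ell$ has neither zeros nor poles in $\mathbb{H}$, and its order at each cusp of $\Gamma_0(\ell^2)$ is given by the standard finite eta-quotient formula; finiteness of these orders is meromorphy at the cusps, which completes the verification that $\phi_\ell$ is a modular function on $\Gamma_0(\ell^2)$.
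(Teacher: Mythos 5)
Your outline is correct, and it follows essentially the same route as the source the paper cites for this statement (the paper gives no proof of its own; it quotes Theorem 2 of Chapter 7 of \cite{K}, whose argument is exactly your first one: conjugate $\gamma\in\Gamma_0(\ell^2)$ to $\gamma'\in SL_2(\mathbb{Z})$ acting on $\ell^2 z$, cancel the $(cz+d)^{1/2}$ factors, and reduce to a Dedekind-sum identity in which $\ell^2\equiv 1\pmod{24}$ is the decisive input). The Ligozat--Newman verification you give as a fallback is a complete and legitimate alternative that packages the same Dedekind-sum computation into a citable criterion; the only detail left implicit in both routes is the case $c=0$, i.e.\ invariance under $z\mapsto z+1$, which is immediate because $(\ell^2-1)/24\in\mathbb{Z}$.
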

Now let $g_{\ell}(z)$ be the modular form $g_{\ell,r}(z)$ where $r$ is the least positive integer satisfying the condition $r(\ell -1)\equiv 0\pmod{24}$.

\begin{definition}
 By the {\it modular equation} for prime $\ell$, we refer to an algebraic equation connecting two modular forms $g_{\ell}(z)$ and $\phi_{\ell}(z)$.
\end{definition}

\begin{theorem}
For $\ell=5,7, 13$ these equations are ,\\

For $\ell=5$,
    \begin{equation}\label{E:5}
    \phi_5^5(z)=g_5(5z)\left(5^2\phi_5^4(z)+5^2\phi_5^3(z)+5\cdot 3\phi_5^2(z)+5\phi_5(z)+1\right).
    \end{equation}
\vspace{2mm}

For $\ell=7$,
    \begin{equation}\label{E:7}
  \begin{aligned}
    \phi_7^7(z)
    &= g_7^2(7z)\{343\phi_7^6(z)+343\phi_7^5(z) + 147\phi_7^4(z)+ 49\phi_7^3(z)+21\phi_7^2(z)\\
    &+7\phi_7(z)+1\}
    +g_7(7z)\{7 \phi_7^4(z)+35\phi_7^5(z)+49\phi_7^6(z)\}.
  \end{aligned}
\end{equation}
\vspace{2mm}

 For $\ell=13$,
    \begin{equation}\label{E:13}
  \phi_{13}^{13}(z)+\sum_{r=1}^{13}\sum_{p=\floor{(r+2)/2}}^7m_{r,p}g_{13}^{p}(13z)\phi_{13}^{13-r}(z)=0.
\end{equation}

\begin{equation*}
\left(m_{r,p}\right)=
\left[\begin{smallmatrix}
-11\cdot 13 & -36\cdot13^2 & -38\cdot13^3 & -20\cdot 13^4 & -6\cdot 13^5 & -13^6 &-13^6 \\
 & 204\cdot 13 & 346\cdot 13^2 & 222\cdot 13^3 & 74\cdot 13^4 & 13^6 &13^6  \\
 & -36\cdot 13 & -126\cdot 13^2  & -102\cdot 13^3 & -38\cdot 13^4 & -7\cdot 13^5 & -7\cdot 13^5\\
& & 346\cdot 13 & 422\cdot 13^2 & 184\cdot 13^3 & 37\cdot 13^4 & 3\cdot 13^5\\
& & -38\cdot 13 & -102\cdot 13^2 & -56\cdot 13^3 & -13^5 & -15\cdot 13^4\\
& & & 222\cdot 13 & 184\cdot 13^2 & 51\cdot 13^3 & 5\cdot 13^4\\
& & & -20\cdot 13 & -38\cdot 13^2 & -13^4 & -19\cdot 13^3\\
& & & & 74\cdot 13 & 37\cdot 13^2 & 5\cdot 13^3\\
& & & & -6\cdot 13 & -7\cdot 13^2 & -15\cdot 13^2\\
& & & & & 13^2 & 3\cdot 13^2\\
& & & & & -13 & -7\cdot 13\\
& & & & & & 13\\
& & & & & & -1
\end{smallmatrix}\right]
\end{equation*}

\end{theorem}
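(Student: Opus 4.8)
The plan is to treat $\phi_\ell(z)$ and $g_\ell(\ell z)$ as modular functions on $\Gamma_0(\ell^2)$ and to realize the stated relation as the (essentially unique) algebraic dependence between them, forced by the finite dimensionality of a space of modular functions with prescribed poles. First I would check that both functions live on $\Gamma_0(\ell^2)$: this is given for $\phi_\ell$, and for $g_\ell(\ell z)$ it follows because conjugating $\Gamma_0(\ell^2)$ by $\left(\begin{smallmatrix}\ell & 0\\ 0 & 1\end{smallmatrix}\right)$ lands inside $\Gamma_0(\ell)$, so the $V_\ell$-image $z\mapsto \ell z$ of a $\Gamma_0(\ell)$-function is a $\Gamma_0(\ell^2)$-function. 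Since $\eta$ has no zeros or poles in $\mathbb{H}$, both $\phi_\ell$ and $g_\ell(\ell z)$ are holomorphic and nonvanishing on $\mathbb{H}$, so all their zeros and poles sit at the $\ell+1$ cusps of $\Gamma_0(\ell^2)$, namely $\infty$, $0$, and the $\ell-1$ cusps $a/\ell$.

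Next I would compute the order of vanishing of $\phi_\ell$ and of $g_\ell(\ell z)$ at each cusp using the standard eta-quotient valence formula (Ligozat's formula). This is the conceptual heart of the argument: the cusp orders tell me at which cusps each function has a pole and of what order, and hence which monomials $g_\ell(\ell z)^p\,\phi_\ell^{\ell-r}$ can be combined so that all poles cancel except at a single cusp. In particular, the requirement that the principal parts match at the cusp $0$ and at the intermediate cusps is what produces the lower summation bound $p\ge\lfloor (r+2)/2\rfloor$ in the $\ell=13$ equation and the analogous shapes for $\ell=5,7$; I would derive these ranges directly from the cusp-order computation. Equivalently, this analysis shows that $\phi_\ell$ is integral of degree exactly $\ell$ over $\mathbb{C}[g_\ell(\ell z)]$, so a monic degree-$\ell$ relation of exactly this shape must exist.

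Having fixed the shape, I would determine the coefficients by comparing $q$-expansions. Each monomial $g_\ell(\ell z)^p\,\phi_\ell^{\ell-r}$ has an explicit $q$-expansion coming from the product formula for $\eta$, and the condition that the chosen linear combination have no pole at any cusp and vanish at $\infty$ forces it to vanish identically by the valence formula; reading off the first several coefficients then yields a linear system whose unique solution is the displayed coefficient list (the matrix $(m_{r,p})$ for $\ell=13$). A useful internal check is the clean identity $\phi_\ell^{\,r_0}=g_\ell(z)\,g_\ell(\ell z)$, where $r_0$ is the least positive integer with $r_0(\ell-1)\equiv 0\pmod{24}$, which follows at once from $\eta(\ell^2z)/\eta(z)=\bigl(\eta(\ell^2 z)/\eta(\ell z)\bigr)\bigl(\eta(\ell z)/\eta(z)\bigr)$ and which can be used, together with the classical level-$\ell$ modular equation relating $g_\ell(z)$ and $g_\ell(\ell z)$, to eliminate $g_\ell(z)$ and arrive at the same relation independently.

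The main obstacle I anticipate is twofold. Conceptually, the delicate step is the exact cusp-order bookkeeping at $0$ and at the intermediate cusps $a/\ell$, since getting these orders right is what legitimizes the precise summation ranges and guarantees the relation is monic of degree $\ell$ rather than something larger. Computationally, the obstacle is simply the size of the linear system for $\ell=13$: one must expand enough terms of the thirteen powers of $\phi_{13}$ against seven powers of $g_{13}(13z)$ to pin down the full $(m_{r,p})$ matrix, and organizing the expansion so that the cancellations are transparent is the bulk of the labor. For $\ell=5,7$ the same scheme runs through with far smaller systems.
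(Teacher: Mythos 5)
Your proposal is correct and is essentially the paper's own method: the paper likewise computes the cusp orders of $\phi_\ell(z)$ and $g_\ell(\ell z)$ on $X_0(\ell^2)$ via the standard eta-quotient formula, builds a combination (there phrased as $1/g_5(5z)$ minus negative powers of $\phi_5$) whose poles all cancel, and concludes by Liouville's theorem, reading the coefficients off the $q$-expansions. The only cosmetic difference is that the paper normalizes so that the single remaining pole sits at $\infty$ rather than arguing about all cusps simultaneously, and it carries this out explicitly only for $\ell=5$.
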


   For primes $2,3,5,7$, the modular equation can be found in Atkin's paper \cite{A1}. The modular equation for prime $13$ can be found Atkin and O'Brien paper \cite{AO}.

   In \cite{GB}, Gordon, and in \cite{H}, Hughes found similar equations for $\ell=11$ and $\ell=17$ respectively. 
   However, these equations are a little more complicated than the above equations since the genus being one means that there does
not exist one single generator.
   
   \begin{theorem}[Riemann-Hurwitz formula, \cite{HR}]
   Let $X, Y$ be two compact Riemann surfaces, let $\pi:Y\rightarrow X$ be a covering map and let $N$ be the index of $Y$ in $X$. Then,
   \[2g(Y)-2=N\cdot(2g(X)-2)+\sum_{p\in Y}(e_p-1).\]
   where $g(Y),g(X)$ are the genera of $Y,X$ respectively and $e_p$ is the ramification degree at $p$.
   \end{theorem}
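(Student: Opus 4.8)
The plan is to derive the formula by comparing the Euler characteristics of $X$ and $Y$ through a compatible triangulation, using the classical identity $\chi(S) = 2 - 2g(S)$ for a compact orientable surface $S$, so that $2g(Y)-2 = -\chi(Y)$ and $2g(X)-2 = -\chi(X)$.

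First I would invoke the local normal form for a nonconstant holomorphic map between Riemann surfaces: around each point $q \in Y$ there exist local coordinates in which $\pi$ has the shape $z \mapsto z^{e_q}$, where $e_q$ is the ramification degree, and $e_q = 1$ for all but finitely many $q$. Consequently the branch locus $B \subset X$, the set of images of ramification points, is finite, and over each $p \in X$ the fiber satisfies the fundamental relation $\sum_{q \in \pi^{-1}(p)} e_q = N$, where $N$ is the degree of $\pi$.

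Next I would choose a triangulation of $X$ in which every branch point is a vertex, writing $V$, $E$, $F$ for its numbers of vertices, edges, and faces, so that $\chi(X) = V - E + F = 2 - 2g(X)$. Pulling this triangulation back along $\pi$ produces a triangulation of $Y$: since edges and open faces avoid the finite branch locus, $\pi$ restricts there to an honest $N$-sheeted covering, so each edge lifts to exactly $N$ edges and each face to exactly $N$ faces. The only discrepancy is at vertices, where a branch point $p$ has only $|\pi^{-1}(p)| = N - \sum_{q \in \pi^{-1}(p)}(e_q - 1)$ distinct preimages by the fiber relation. Summing over all vertices yields $V_Y = NV - \sum_{q \in Y}(e_q - 1)$, a finite sum because $e_q = 1$ off the ramification locus. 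Combining the three counts gives $\chi(Y) = N(V - E + F) - \sum_{q}(e_q - 1) = N\chi(X) - \sum_q(e_q - 1)$, and rewriting each Euler characteristic as $2 - 2g$ and rearranging delivers the stated identity.

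The main obstacle is the vertex count near the ramified points, where several sheets coalesce: one must check that the pulled-back cells genuinely assemble into a triangulation of $Y$ and that the number of distinct preimages of each branch point drops by precisely $\sum(e_q-1)$. Both facts follow from the local model $z \mapsto z^{e_q}$ together with $\sum e_q = N$. Alternatively, one can bypass triangulations entirely by pulling back a meromorphic differential $\omega$ on $X$ and comparing degrees of divisors, using that $\deg(\pi^{*}\omega) = N\deg(\omega) + \sum_q(e_q - 1)$ while the degree of a canonical divisor on a genus-$g$ surface equals $2g - 2$; this again produces the formula, with the ramification contribution arising from $d(z^{e_q}) = e_q z^{e_q - 1}\,dz$.
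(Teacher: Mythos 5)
Your proof is correct. The paper itself offers no proof of this statement: it is quoted as a classical result with a citation to Hartshorne, and is used only as a black box to compute the genus of the modular curves $X_0(\ell)$ and $X_0(\ell^2)$. So there is no in-paper argument to compare against; what you have written is the standard topological proof via Euler characteristics. Your handling of the one delicate point --- that a branch point $p$ has exactly $|\pi^{-1}(p)| = N - \sum_{q\in\pi^{-1}(p)}(e_q-1)$ preimages, which follows from the fiber relation $\sum_{q\in\pi^{-1}(p)} e_q = N$ and the local model $z\mapsto z^{e_q}$ --- is exactly right, and the pulled-back triangulation argument then gives $\chi(Y)=N\chi(X)-\sum_q(e_q-1)$ as claimed. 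One small remark: the paper's phrase ``the index of $Y$ in $X$'' should be read as the degree of the covering $\pi$ (for modular curves this is the index of the corresponding congruence subgroup), which is how you correctly interpreted $N$; and implicitly $\pi$ must be a nonconstant holomorphic map (a branched cover), since a genuine unramified covering map of compact surfaces would force $\sum(e_p-1)=0$. Your alternative derivation via $\deg(\pi^*\omega)=N\deg(\omega)+\sum_q(e_q-1)$ and $\deg K = 2g-2$ is also valid and is closer in spirit to the algebraic-geometric treatment in the cited reference.
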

   
   This is an extremely useful theorem to calculate the genus of modular curves. For example, the genus of $X_0(5)$ is $0$. This follows from the fact that $\Gamma_0(5)$ has $2$ cusps with orders $1,5$, two elliptic points of order $2$ ,the genus of $X_0(1)$ is $0$ and the index of $X_0(5)$ in $X_0(1)$ is $6$. It can be proved similarly that the genus of $X_0(7),X_0(13)$ are $0$ and the genus of $X_0(11), X_0(17)$ are $1$. \\
   
\subsection{Proving modular equations for $5,7$ and $13$.}

The modular equations stated here can be proved using the ramification data of cusps of the modular curves $X_0(\ell^2)$ for $\ell=5,7,13$. Here we only explain how to obtain the equation for prime $5$, since the method for proving the other modular equations is similar.\\

The modular curve $X_0(25)$ has $6$ cusps $0,\infty,\frac{1}{5},\frac{2}{5},\frac{3}{5},\frac{4}{5}.$ with cusp widths $25,1,1,1,1,1$ respectively. Here $\phi_5(z)=\frac{\eta(25z)}{\eta(z)}$ and $g_5(5z)=\left(\frac{\eta(25z)}{\eta(5z)}\right)^6$ are modular functions on $\Gamma_0(25)$. Using theorem 1.65 in \cite{O}, we calculate the order of vanishing of these modular functions at cusps and they are stated in table \ref{t:2}. \\

\begin{table}[htp]
    \begin{tabular}{|l@{\hspace{20pt}}|*{7}{c|}}
\hline
 & $0$ & $\infty$  & $\frac{1}{5}$ & $\frac{2}{5}$ & $\frac{3}{5}$ &  $\frac{4}{5}$ \\
\cline{1-7}
\hline
\hline
$\phi_5(z)$  & $-1$ & $1$ & $0$ & $0$ & $0$ & $0$\\
\hline
\cline{1-5}
$g_5(5z)$ & $-1$ & $5$   & $-1$ & $-1$ & $-1$ & $-1$\\
\hline
\end{tabular}\\
\vspace{2mm}
    \caption{}\label{t:2}
    \end{table}

We first consider $\frac{1}{g_5(z)}$. This function has only a pole of order $5$ at $\infty$ and holomorphic at every point on the modular curve $X_0(25)$. Now by subtracting relevant negative powers of $\phi_5(\tau)$, we remove the pole at $\infty$. The resulting function,

\[\frac{1}{g_5(5z)}-\frac{1}{\phi_5^5(z)}-\frac{5}{\phi_5^4(z)}-\frac{15}{\phi_5^3(z)}-\frac{5^2}{\phi_5^2(z)}-\frac{5^2}{\phi_5(z)}\]

does not have any poles on the modular curve $X_0(25)$. Hence by Liouville's theorem, it is a constant.

\section{Applications of modular equations}\label{AM}
\begin{theorem}[Newton's Formula, theorem $9$, chapter 7, \cite{K}]\label{T:3.1}

Let \[f(x)=x^q-p_1x^{q-1}+p_2x^{q-2}-\cdots+(-1)^qp_q,\]

with roots $\phi_1,\cdots ,\phi_q$. For h a positive integer put, $S_h=\sum_{i=1}^q\phi_i^h$. Then if $1 \leq h \leq q$, 
\[ S_h-p_1S_{h-1}+p_2S_{h-2}-\cdots+(-1)^{h-1}p_{h-1}S_1+(-1)^hp_hh=0.\]
if $h>q$,
\[S_h-p_1S_{h-1}+p_2S_{h-2}-\cdots +(-1)^qp_qS_{h-q}=0.\]
\end{theorem}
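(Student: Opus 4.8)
The final statement is Newton's Formula (Theorem 3.1), a classical result relating power sums $S_h$ to the elementary symmetric functions $p_1,\dots,p_q$ that arise as coefficients of a monic polynomial. Let me sketch how I would prove it.

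The final statement to prove is Newton's identities, relating the power sums $S_h=\sum_{i=1}^q\phi_i^h$ of the roots $\phi_1,\dots,\phi_q$ to the elementary symmetric functions $p_1,\dots,p_q$ appearing as the coefficients of $f(x)=x^q-p_1x^{q-1}+\cdots+(-1)^qp_q$. The plan is to proceed purely algebraically, treating the identities as a consequence of the factorization $f(x)=\prod_{i=1}^q(x-\phi_i)$, and to split into the two regimes $1\le h\le q$ and $h>q$ exactly as in the statement.

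\textbf{Generating-function approach.} The cleanest route I would take is through the logarithmic derivative of $f$. Writing $f(x)=\prod_{i=1}^q(x-\phi_i)$, one has
\[
\frac{f'(x)}{f(x)}=\sum_{i=1}^q\frac{1}{x-\phi_i}.
\]
Expanding each term $\frac{1}{x-\phi_i}=\sum_{h\ge0}\phi_i^h x^{-h-1}$ as a formal power series in $1/x$ and summing over $i$ gives $\frac{f'(x)}{f(x)}=\sum_{h\ge0}S_h\,x^{-h-1}$, where $S_0=q$. Clearing the denominator yields the identity $f'(x)=\left(\sum_{h\ge0}S_h x^{-h-1}\right)f(x)$. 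The key step is then to substitute the explicit coefficient expansions $f(x)=\sum_{k=0}^q(-1)^k p_k x^{q-k}$ (with $p_0=1$) and $f'(x)=\sum_{k=0}^q(-1)^k p_k (q-k)x^{q-k-1}$, and to compare the coefficients of a fixed power of $x$ on both sides. Matching the coefficient of $x^{q-h-1}$ produces exactly the two displayed recurrences: for $1\le h\le q$ the finite sum terminates and leaves the extra term $(-1)^h p_h h$, whereas for $h>q$ the coefficients $p_k$ with $k>q$ vanish and the telescoping sum truncates at $S_{h-q}$.

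\textbf{Alternative and main obstacle.} An equally valid alternative is a direct combinatorial argument: expand $p_j S_{h-j}$ and track, for each monomial, how many times each root appears, then verify that all mixed terms cancel in pairs leaving only the asserted relation. I expect the main obstacle in either approach to be purely bookkeeping rather than conceptual, namely keeping the index shifts and the alternating signs $(-1)^k$ consistent when equating coefficients, and correctly isolating the boundary contribution $(-1)^h p_h h$ that distinguishes the case $h\le q$ from $h>q$. Since $f$ here is a genuine polynomial, no convergence issues arise and the formal power-series manipulation in $1/x$ is fully rigorous; the identities hold as polynomial identities in the $\phi_i$, so the result is independent of the modular-function context in which it will be applied.
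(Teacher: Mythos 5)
Your argument is correct, and it is the standard generating-function proof of Newton's identities via the logarithmic derivative $f'(x)/f(x)=\sum_{h\ge 0}S_h x^{-h-1}$; comparing the coefficient of $x^{q-h-1}$ in $f'(x)=f(x)\sum_{h\ge0}S_hx^{-h-1}$ does yield $(-1)^hp_h(q-h)$ on the left and $\sum_{k=0}^{\min(h,q)}(-1)^kp_kS_{h-k}$ on the right, and the term $k=h$ with $S_0=q$ accounts precisely for the boundary contribution $(-1)^hp_hh$ in the case $1\le h\le q$. Note that the paper offers no proof of this statement at all: it is imported verbatim as Theorem 9, Chapter 7 of Knopp's book, so there is nothing in the paper to compare against, but your proof is complete and rigorous as a formal identity in the $\phi_i$.
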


As described in \cite{K}, using the notation of Newton's formula, we let $S_{r,\ell}$ be the sum of the $r^{th}$ power of the roots of modular equation for prime $\ell$,

\begin{equation}\label{E.Sr}
S_{r,\ell}=\sum_{k=0}^{\ell-1}\phi_{\ell}^r(\zeta_{\ell}^kq^{\frac{1}{\ell}}).
\end{equation}

For a Laurent series $f(z)=\sum_{n\geq N} a(n)q^n$, we define the $U_p$ operator by,
\begin{equation}\label{E:2.1}
U_p\left(f(z)\right)=\sum_{pn\geq N} a(pn)q^n.
\end{equation}
\vspace{3mm}

Let $g(z)=\sum_{n\geq N} b(n)q^{n}$ be another Laurent series. The following simple property will play a key role in our proof.
\begin{equation}\label{E:2.2}
U_p\left(f(z)g(pz)\right)=g(z)U_p\left(f(z)\right).
\end{equation}

\begin{proposition}[\cite{AL}, lemma 7]\label{P:Gamma0N}
If $f(z)$ is a modular function for $\Gamma_0(N)$, if $p^2|N $, then $U_p\Big{(}f(z)\Big{)}$ is a modular function for $\Gamma_0(N/p)$.
\end{proposition}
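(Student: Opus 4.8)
\emph{Proof sketch.} The plan is to express $U_p$ as an average of $f$ over a finite set of M\"obius transformations and then to show that precomposing with an element of $\Gamma_0(N/p)$ only permutes these transformations modulo $\Gamma_0(N)$. First I would record the classical formula on $\mathbb H$: writing $f=\sum_n a(n)q^n$ with $q=e^{2\pi i z}$, the geometric sum $\sum_{j=0}^{p-1}e^{2\pi i nj/p}$, which equals $p$ when $p\mid n$ and vanishes otherwise, gives
\[
U_p\big(f\big)(z)=\frac1p\sum_{j=0}^{p-1}f\big(\alpha_j z\big),\qquad
\alpha_j=\begin{pmatrix}1&j\\0&p\end{pmatrix},\quad \alpha_j z=\frac{z+j}{p}.
\]
Meromorphy of $U_p(f)$ on $\mathbb H$ is clear from this expression, and its meromorphy at the cusps is the standard fact that $U_p$ sends functions with finite principal parts at every cusp to functions of the same type; so the content of the proposition is the $\Gamma_0(N/p)$-invariance of $U_p(f)$.

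To prove invariance, fix $\gamma=\begin{pmatrix}a&b\\c&d\end{pmatrix}\in\Gamma_0(N/p)$, so that $(N/p)\mid c$. For each $j$ I would seek an index $\sigma(j)\in\{0,\dots,p-1\}$ and a matrix $\gamma_j\in\Gamma_0(N)$ with the matrix identity $\alpha_j\gamma=\gamma_j\,\alpha_{\sigma(j)}$. Multiplying out,
\[
\alpha_j\gamma=\begin{pmatrix}a+jc&b+jd\\pc&pd\end{pmatrix},
\]
whose lower-left entry $pc$ is divisible by $N$ precisely because $(N/p)\mid c$; this is where the level first cooperates. Forcing the identity then determines $\gamma_j=\begin{pmatrix}a+jc&B\\pc&d-c\sigma(j)\end{pmatrix}$, with the single remaining constraint that $\sigma(j)$ solve $(a+jc)\sigma(j)\equiv b+jd\pmod p$; given such a $\sigma(j)$, the entry $B=\big(b+jd-(a+jc)\sigma(j)\big)/p$ is an integer, and $\det\gamma_j=1$ follows automatically from comparing determinants of the two sides, since $\det(\alpha_j\gamma)=p=\det\gamma_j\cdot p$.

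The crux, and the step I expect to require the most care, is the unique solvability of that congruence together with the claim that $j\mapsto\sigma(j)$ permutes $\{0,\dots,p-1\}$; this is exactly where $p^2\mid N$ is indispensable. Since $p^2\mid N$ we have $p\mid(N/p)$ and hence $p\mid c$, so $a+jc\equiv a\pmod p$; and from $\det\gamma=ad-bc=1$ with $p\mid c$ we get $ad\equiv1\pmod p$, forcing both $a$ and $d$ to be units modulo $p$. Thus $a+jc$ is invertible mod $p$, the congruence has the unique solution $\sigma(j)\equiv a^{-1}(b+jd)\pmod p$, and as a function of $j$ this is the affine map $j\mapsto a^{-1}b+(a^{-1}d)\,j$ with invertible slope $a^{-1}d$, hence a permutation. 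Finally, because each $\gamma_j\in\Gamma_0(N)$ and $f$ is invariant under $\Gamma_0(N)$, we have $f(\gamma_j w)=f(w)$, and therefore
\[
U_p\big(f\big)(\gamma z)=\frac1p\sum_{j=0}^{p-1}f\big(\alpha_j\gamma z\big)=\frac1p\sum_{j=0}^{p-1}f\big(\gamma_j\alpha_{\sigma(j)}z\big)=\frac1p\sum_{j=0}^{p-1}f\big(\alpha_{\sigma(j)}z\big)=U_p\big(f\big)(z),
\]
the last equality holding since $\sigma$ is a permutation. This shows $U_p(f)$ is a modular function on $\Gamma_0(N/p)$.
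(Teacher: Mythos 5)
Your argument is correct. Note, however, that the paper offers no proof of this proposition at all --- it is imported verbatim as Lemma~7 of Atkin--Lehner \cite{AL} --- so there is no internal proof to compare against; what you have written is essentially the standard argument from the cited source. The coset computation checks out: $\alpha_j\gamma$ has lower-left entry $pc$ with $N\mid pc$ because $(N/p)\mid c$, the determinant comparison gives $\det\gamma_j=1$, and you correctly isolate the one place where $p^2\mid N$ is indispensable, namely that $p\mid(N/p)\mid c$ forces $a+jc\equiv a\pmod p$ to be a unit for every $j$ (so the congruence defining $\sigma(j)$ is uniquely solvable and $j\mapsto a^{-1}b+a^{-1}d\,j$ is a genuine permutation); without $p^2\mid N$ one $j$ would fail and the extra coset $\left(\begin{smallmatrix}p&0\\0&1\end{smallmatrix}\right)$ of the full Hecke operator would be needed. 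The only step you wave through is meromorphy at the cusps, which is acceptable: each $f\circ(\alpha_j\rho)$ for $\rho\in SL_2(\mathbb{Z})$ factors as $f\circ\gamma'\circ\beta$ with $\gamma'\in SL_2(\mathbb{Z})$ and $\beta$ upper triangular, so finiteness of the principal part at every cusp of $\Gamma_0(N/p)$ follows from the same property of $f$ on $\Gamma_0(N)$.
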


Let $h(z)$ be a modular function on $\Gamma_0(\ell)$. As shown in \cite{K} lemma $2$, chapter $8$, we have

\begin{equation}\label{E:2.1.2}
    U_{\ell}(h(z))=\frac{1}{\ell}\sum_{k=0}^{\ell-1}h\bigg(\zeta_{\ell}^kq^{\frac{1}{\ell}}\bigg).
\end{equation}

Now combining equations \eqref{E.Sr} and \eqref{E:2.1.2},

\begin{equation}\label{E:Sr}
    S_{r,\ell}(z)=\ell\cdot U_{\ell}\bigg(\phi_{\ell}^r(z)\bigg).
\end{equation}

We use the modular equations to find Laurent series expansions for $S_{r,\ell}(z)$ in $g_{r,\ell}(z)$. Lemma \ref{L:3.2}-\ref{L.3.5} describe the $\ell$-adic orders of the coefficients in those expansions. For a rational integer a, let $\pi_{\ell}(a)$ to be the  $\ell$-adic order of $a$.

\begin{lemma}\label{L:3.2}
Let $r$ be a non zero integer. Then $S_{r,5}(z)$ is a Laurent polynomial in $g_5(z)$ of the form $S_{r,5}(z)=\sum_{p=-\infty}^{\infty}a_{r,p}^5g_5(z)^p,$
where $a_{r,p}^5$ is an integer divisible by 5.
\begin{equation}\label{E:a5}
\pi_5(a_{r,p}^5)\geq \left\lfloor\frac{5p-r+1}{2}\right\rfloor,
\end{equation}

 for $r>0$, $a_{r,p}^5\not=0$ for $\floor{\frac{r+4}{5}}\leq p \leq r$ and for $r<0$, $a_{r,p}^5=0$ for $\floor{\frac{r+4}{5}}>p$.
 
\end{lemma}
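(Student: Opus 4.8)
The plan is to set up a recursion for the power sums $S_{r,5}(z)$ using Newton's formula (Theorem~\ref{T:3.1}) applied to the modular equation \eqref{E:5}, and then track the $5$-adic orders of the resulting coefficients by induction on $r$. First I would rewrite the modular equation \eqref{E:5} as a monic polynomial equation whose roots are the conjugates $\phi_5(\zeta_5^k q^{1/5})$, $k=0,\dots,4$, that appear in the definition \eqref{E.Sr} of $S_{r,5}$. Dividing \eqref{E:5} through by $\phi_5^5(z)$ and using that $g_5(5z)$ is fixed under the substitution $z\mapsto\zeta_5^k q^{1/5}$ (it is a Laurent series in $q$, not $q^{1/5}$), the elementary symmetric functions $p_1,\dots,p_5$ of the roots become explicit Laurent polynomials in $g_5(z)$ with controlled $5$-adic content: each $p_j$ carries a factor $g_5(z)$ and the integer coefficients $5^2,5^2,15,5,1$ supply increasing powers of $5$ as $j$ decreases. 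This is exactly the input Newton's formula needs.

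Next I would run Newton's identities. For $1\le h\le 5$ we have $S_h = p_1 S_{h-1}-p_2 S_{h-2}+\cdots \mp p_h h$, and for $h>5$ the shorter recursion $S_h = p_1 S_{h-1}-\cdots + p_5 S_{h-5}$. Since each $p_j$ is divisible by $g_5(z)$ and by a power of $5$, each step raises the power of $g_5$ and simultaneously injects a factor of $5$; this is what produces both the claimed lower bound $\pi_5(a^5_{r,p})\ge\lfloor (5p-r+1)/2\rfloor$ and the stated support conditions on $p$. The base cases $S_{1,5},\dots,S_{5,5}$ I would compute directly from the coefficients of \eqref{E:5} (note $S_{1,5}=p_1$, etc.) and verify the inequality and the range $\lfloor (r+4)/5\rfloor\le p\le r$ by hand. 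The negative-$r$ statement I would handle by the analogous recursion obtained from the reciprocal polynomial, or equivalently by running Newton's formula with $\phi_5^{-1}$ in place of $\phi_5$, which reverses the role of the leading and trailing coefficients.

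The inductive step for the bound is the heart of the argument. Assuming the estimate for all $S_{r',5}$ with $r'<r$, I would substitute the expansions $S_{r',5}=\sum_p a^5_{r',p}g_5^p$ and the factored forms of the $p_j$ into the recursion, collect the coefficient of a fixed power $g_5^p$, and show the minimum over the contributing terms of (valuation from $p_j$) $+\,\pi_5(a^5_{r-j,\,p-\text{shift}})$ still meets $\lfloor(5p-r+1)/2\rfloor$. The main obstacle I anticipate is precisely this bookkeeping: the floor function means the bound increments only on every other step, so a naive term-by-term estimate loses a factor of $5$ on roughly half the steps and I must exploit cancellation or the parity structure of the shifts $5p-r$ to recover the sharp exponent. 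I expect the cleanest way to control this is to prove the inequality in the slightly stronger form $2\,\pi_5(a^5_{r,p})\ge 5p-r+1$ (avoiding the floor during the induction and taking it only at the end), so that the two-for-one growth of the $5$-power against the $g_5$-power matches the slope $5/2$ dictated by the modular equation, with the support bounds following from tracking the extreme admissible powers of $g_5$ at each recursion step.
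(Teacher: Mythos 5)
Your overall strategy is the same as the paper's: rewrite the modular equation \eqref{E:5} as a monic polynomial whose roots are the conjugates $\phi_5(\zeta_5^k q^{1/5})$, apply Newton's formula (Theorem \ref{T:3.1}) to obtain a five-term recurrence for $S_{r,5}$, compute the base cases by hand, and close the induction by taking the minimum of the valuations of the contributing terms. (The paper in fact only carries this out for $r<0$, citing Knopp for $r>0$; your plan to treat both signs by the same device, passing to the reciprocal polynomial for the negative side, is exactly what the paper does for $r<0$.)

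The one concrete misstep is the proposed strengthening $2\,\pi_5(a^5_{r,p})\ge 5p-r+1$. That inequality is false already at the base cases: $S_{-2,5}=-5$ gives $\pi_5(a^5_{-2,0})=1$ while $5\cdot 0-(-2)+1=3$, and on the positive side $S_{1,5}=25\,g_5(z)$ gives $\pi_5(a^5_{1,1})=2$ while $5\cdot 1-1+1=5$. Since the floor bounds $\left\lfloor 3/2\right\rfloor=1$ and $\left\lfloor 5/2\right\rfloor=2$ hold with equality in these cases, there is no room to strengthen. Fortunately the worry that motivated it --- that a term-by-term estimate loses a factor of $5$ on half the steps --- is unfounded. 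With the recurrence $a_{r,p}=-5a_{r+1,p}-15a_{r+2,p}-25a_{r+3,p}-25a_{r+4,p}+a_{r+5,p+1}$ one checks directly that each of the five contributions already meets the floor bound: for instance $\left\lfloor\frac{5p-(r+2)+1}{2}\right\rfloor+1=\left\lfloor\frac{5p-r+1}{2}\right\rfloor$ and $\left\lfloor\frac{5(p+1)-(r+5)+1}{2}\right\rfloor=\left\lfloor\frac{5p-r+1}{2}\right\rfloor$, while the remaining three terms exceed it. So you should drop the strengthened hypothesis and run the induction with the stated floor bound as is; everything else in your outline then goes through.
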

\begin{proof}
For $r>0$ this inequality can be found in \cite{K}.
We prove this inequality for $r<0$ here. To calculate the $S_{r,5}$ when $r<0$ we modify the modular equation by dividing it by respective power of $\phi_5(\tau)$ and $g_5(z)$.
\begin{equation*}
\begin{aligned}
\bigg(\phi_5^{-1}(z)\bigg)^5+5\bigg(\phi_5^{-1}(z)\bigg)^4
&+5\cdot3\bigg(\phi_5^{-1}(z)\bigg)^3+5^2\bigg(\phi_5^{-1}(z)\bigg)^2\\
&+5^2\bigg(\phi_5^{-1}(z)\bigg)-\bigg(g_5^{-1}(5z)\bigg)=0.
\end{aligned}
\end{equation*}
Then using Newton formula, we have

\begin{enumerate}
    \item $S_{-1,5}(z)=-5,$
    \item $S_{-2,5}(z)=-5,$
    \item $S_{-3,5}(z)=25,$
    \item $S_{-4,5}(z)=-25,$
    \item $S_{-5,5}(z)=5g_5^{-1}(z),$\item $S_{r,5}(z)$=$-5S_{r+1,5}(z)-15S_{r+2,5}(z)-25S_{r+3,5}(z)-25S_{r+4,5}(z)+g^{-1}(z)S_{r+5,5}(z).$
\end{enumerate}
\vspace{3mm}
First of all using induction with the help of the recurrence relation 
satisfied by the $S_{r,5}$ we see that for $r<0$, $a_{r,p}^5=0$ for  $\floor{\frac{r+4}{5}}> p$.\\ 

Now notice that above claim is true for $r=-1\cdots-5$. Now we assume the claim hold for any $r<-5$. Again from the recurrence relation we have,
\[a_{r,p}=-5 a_{r+1,p}-15a_{r+2,p}-25a_{r+3,p}-25a_{r+4,p}+a_{r+5,p+1}\]

Now by assumption we have that the 5-adic valuation of the left hand side satisfies required inequality. 

\begin{equation*}
    \begin{aligned}
    \pi_5\left(a_{r,p}\right)\geq&{\rm min}\{\pi_5(a_{r+1,p})+1, \pi_5(a_{r+2,p})+1, \pi_5(a_{r+3,p})+2\\
    ,&\pi_5(a_{r+4,p})+2, \pi_5(a_{r+5,p+1})\}\\
    =&\left\lfloor\frac{5p-r+1}{2}\right\rfloor.
    \end{aligned}
\end{equation*}
\end{proof}
Notice that we have used $a_{r,p}$ to denote $a_{r,p}^5$.

\begin{lemma}\label{L:3.4}
Let $r$ be a non zero integer. Then $S_{r,7}(z)$ is a Laurent polynomial in $g_7(z)$ of the form $S_{r,7}(z)=\sum_{p=-\infty}^{\infty}a_{r,p}^7g_7^p(z)$,
where $a_{r,p}^7$ is an integer divisible by 7,
\begin{equation}\label{E:a7}
\pi_7(a_{r,p}^7)\geq \left\lfloor\frac{7p-2r+3}{4}\right\rfloor,
\end{equation}
for $r>0$, $a_{r,p}^7\not=0$ for $\floor{\frac{2r+6}{7}}\leq p \leq 2r$. For $r<0$, $a_{r,p}^7=0$ for $\floor{\frac{2r+6}{7}}>p .$ 
\end{lemma}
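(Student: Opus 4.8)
\emph{Proof proposal.} The plan is to mirror the computation just given for $\ell=5$, now driven by the modular equation \eqref{E:7}. First I would collect \eqref{E:7} into the monic form
\[
\phi_7^7-p_1\phi_7^6+p_2\phi_7^5-p_3\phi_7^4+p_4\phi_7^3-p_5\phi_7^2+p_6\phi_7-p_7=0,
\]
whose seven roots are the conjugates $\phi_7(\zeta_7^kq^{1/7})$. Exactly as in the $\ell=5$ case, $g_7(7z)$ becomes $g_7(z)$ under $q\mapsto\zeta_7^kq^{1/7}$ (equivalently this follows from \eqref{E:2.2} together with $S_{r,7}(z)=7\,U_7(\phi_7^r(z))$ from \eqref{E:Sr}), so the elementary symmetric functions $p_i$ are the same polynomials in $g:=g_7(z)$ for all seven roots; each has degree at most $2$ in $g$, with $p_1=7^3g^2+7^2g$ at one end and $p_7=g^2$ at the other. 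Since $S_{r,7}(z)$ is the $r$-th power sum of these roots, Newton's formula (Theorem \ref{T:3.1}) renders it a Laurent polynomial in $g$. The $r>0$ half of the statement, including the support $\floor{(2r+6)/7}\le p\le 2r$, is then the computation carried out in \cite{K}, so I would cite it and concentrate on $r<0$.

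For $r<0$ I would invert the $h>7$ branch of Newton's formula. Solving $S_{h,7}-p_1S_{h-1,7}+\cdots-p_7S_{h-7,7}=0$ for the lowest index and using $p_7=g^2$ gives
\[
S_{r,7}=\frac{1}{g^2}\Big(S_{r+7,7}-p_1S_{r+6,7}+p_2S_{r+5,7}-p_3S_{r+4,7}+p_4S_{r+3,7}-p_5S_{r+2,7}+p_6S_{r+1,7}\Big).
\]
To start the recursion I would divide \eqref{E:7} by $\phi_7^7$, pass to the reciprocal polynomial in $\phi_7^{-1}$, and apply the $1\le h\le 7$ branch of Newton's formula to obtain $S_{-1,7},\dots,S_{-7,7}$ as explicit Laurent polynomials in $g$ (for instance $S_{-1,7}=-7$, and the term of lowest order in $S_{-7,7}$ is $7g^{-2}$). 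From these base cases one reads off that $a_{r,p}^7=0$ when $\floor{(2r+6)/7}>p$, and the vanishing for all $r<0$ then propagates through the displayed recurrence by induction on $|r|$.

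The crux is the $7$-adic bound. Writing $S_{r,7}=\sum_p a_{r,p}^7g^p$ and extracting the coefficient of $g^{p+2}$ from $g^2S_{r,7}=\cdots$ yields
\[
a_{r,p}^7=a_{r+7,p+2}^7-7^3a_{r+6,p}^7-7^2a_{r+6,p+1}^7-7^3a_{r+5,p}^7-35\,a_{r+5,p+1}^7-3\cdot7^2a_{r+4,p}^7-7\,a_{r+4,p+1}^7-7^2a_{r+3,p}^7-3\cdot7\,a_{r+2,p}^7-7\,a_{r+1,p}^7,
\]
where each weight is precisely the power of $7$ dividing the relevant monomial of some $p_i$. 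Assuming $\pi_7(a_{s,j}^7)\ge\floor{(7j-2s+3)/4}$ for every $s$ with $|s|<|r|$, I would verify term by term that each summand has $7$-adic order at least $\floor{(7p-2r+3)/4}$, so that their minimum does too; the leading term $a_{r+7,p+2}^7$ already meets the bound with equality, and for every other term the extra power of $7$ exactly offsets the drop in the floor caused by the shifts in $s$ and in the $g$-exponent. I expect this bookkeeping to be the main obstacle: because the bound now carries denominator $4$ (versus $2$ for $\ell=5$) and the recursion divides by $g^2$ (versus $g$), one must control each of the ten contributions through estimates of the form $\floor{(A-c)/4}\ge\floor{A/4}-\lceil c/4\rceil$, and must confirm that the two monomials inside each $p_i$, which carry different powers of $7$, both respect the inequality.
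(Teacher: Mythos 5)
Your proposal is correct and takes essentially the same route as the paper: cite \cite{K} for $r>0$, obtain $S_{-1,7},\dots,S_{-7,7}$ from the modular equation divided by $\phi_7^7$ and $g_7^2$, and then induct on negative $r$ via the order-seven recurrence solved for the lowest index, with the floor-function bookkeeping term by term. Your coefficient identity for $a_{r,p}^7$ matches the paper's exactly (the paper's displayed recurrence writes $-(7^3+5\cdot 7)S_{r+5,7}$ where $-(7^3+5\cdot 7\,g_7^{-1}(z))S_{r+5,7}$ is meant, but its coefficient recurrence agrees with yours), so no further changes are needed.
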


\begin{proof}
For $r>0$ this inequality can be found in \cite{K}. We here prove this inequality when $r<0$.  
To calculate the $S_{r,7}(z)$ when $r<0$ we modify the modular equation by dividing it by respective power of $\phi_7(z)$ and $g_7(z)$ 
 
Then using Newton's formula, we have 
\begin{enumerate}[]
    \item $S_{-1,7}(z)=-7,$
    \item $S_{-2,7}(z)=7,$
    \item $S_{-3,7}(z)=-7^2,$
    \item $S_{-4,7}(z)=-7^2-2^2\cdot 7 g^{-1}_7(z),$
    \item $S_{-5,7}(z)= 7^3+10\cdot 7 g^{-1}_7(z)$,
    \item $S_{-6,7}(z)=7^3,$
    \item $S_{-7,7}(z)= 7^3g^{-1}_7(z)+7g^{-2}_7(z).$
    
\vspace{2mm}
For $r\leq -8$,
    \begin{equation*}
        \begin{aligned}
        S_{r,7}(z)=&-7S_{r+1,7}(z)-3\cdot 7S_{r+2,7}(z)-7^2S_{r+3}(z)\\
        &-(3\cdot7^2+7g^{-1}_7(z))S_{r+4,7}(z) -(7^3+5\cdot 7)S_{r+5}(z)\\
        &-(7^3+7^2g^{-1}_7(z))S_{r+6,7}(z)+g^{-2}_7(z)S_{r+7,7}(z).
        \end{aligned}
    \end{equation*}
\end{enumerate}

First of all we can use the recurrence relation satisfied by the $S_{r,7}(z)$ to see that for $r<0$, $a_{r,p}^7=0$ for $\floor{\frac{2r+6}{7}}>p$.
Now notice that above claim is true for $r=-1\cdots-7$. Now we assume the claim hold for any $r<-7$. Again from the recurrence relation we have,
\begin{equation*}
    \begin{aligned}
    a_{r,p}=&-7a_{r+1,p}-3\cdot7a_{r+2,p}-7^2a_{r+3,p}-3\cdot 7^2a_{r+4,p}-7a_{r+4,p+1}\\
    &-7^3a_{r+5,p}
    -5\cdot 7a_{r+5,p+1}
    -7^3a_{r+6,p}-7^2a_{r+6,p+1}+a_{r+7,p+2}.
    \end{aligned}
\end{equation*}

Now by assumption we have that the 
7-adic valuation of the left hand side satisfies required inequality. 
\begin{equation*}
\begin{aligned}
    \pi_7(a_{r,p})\geq&\min\Bigg\{\pi_7(a_{r+1,p})+1, \pi_7(a_{r+2,p})+1, \pi_7(a_{r+3,p})+2
    ,\pi_7(a_{r+4,p})+2,\\
    &\pi_7(a_{r+4,p+1})+1, \pi_7(a_{r+5,p})+3, \pi_7(a_{r+5,p+1})+1,  \pi_7(a_{r+6,p})+3,\\
    &\pi_7(a_{r+6,p+1})+2, \pi_7(a_{r+7,p+2})\Bigg\}\\
    =&\left\lfloor\frac{7p-2r+3}{4}\right\rfloor.
\end{aligned}
\end{equation*}
\end{proof}
Notice that we have used $a_{r,p}$ to denote $a_{r,p}^7$.

\begin{lemma}\label{L.3.5}
Let $r$ be a non zero integer. Then $S_{r,13}(z)$ is a Laurent polynomial in $g_{13}(z)$ of the form  $S_{r,13}(z)=\sum_{p=-\infty}^{\infty}a_{r,p}^{13}g_{13}^p(z)$,
where $a_{r,p}$ is an integer divisible by $13$,
\begin{equation}\label{E:a13}
\pi_{13}(a_{r,p}^{13})\geq \left\lfloor\frac{13p-7r+13}{14}\right\rfloor,
\end{equation}
for $r>0$, $a_{r,p}\not=0$ for $\floor{\frac{7r+12}{13}}\leq p \leq 7r$. For $r<0$, $a_{r,p}\not=0$ for $\floor{\frac{7r+12}{13}} \leq p\leq 0$.
\end{lemma}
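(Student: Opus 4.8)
The statement (Lemma \ref{L.3.5}) is the prime-13 analogue of Lemmas \ref{L:3.2} and \ref{L:3.4}. It asserts that $S_{r,13}(z)$ is a Laurent polynomial in $g_{13}(z)$ with $13$-adic order bounds $\pi_{13}(a_{r,p}^{13})\geq \lfloor (13p-7r+13)/14\rfloor$, together with explicit nonvanishing ranges for the index $p$. I will follow exactly the template established in the two preceding proofs, since the structure is identical and only the arithmetic parameters change.

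Let me think about how the proof should proceed...
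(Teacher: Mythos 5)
Your proposal stops at the point where the proof would need to begin: after correctly identifying that the statement is the $\ell=13$ analogue of Lemmas \ref{L:3.2} and \ref{L:3.4} and announcing that you will ``follow the template,'' you supply no actual argument. Saying the structure is identical and only the arithmetic parameters change is not a proof, and in fact the $\ell=13$ case is the one where the template is least automatic. The paper handles $r>0$ by citing Atkin and O'Brien, and for $r<0$ it must (i) divide the modular equation \eqref{E:13} by $\phi_{13}^{13}(z)$ and $g_{13}^{7}(z)$ to get a polynomial whose roots are the $\phi_{13}^{-1}$'s, (ii) compute the thirteen base cases $S_{-1,13},\dots,S_{-13,13}$ explicitly (the paper records these as a large matrix of coefficients $a_{r,p}^{13}$), (iii) write the order-13 recursion \eqref{E:13.r} with coefficients $C_{-i}$ built from the $m_{r,p}$, and (iv) close the induction using the coefficient bound $\pi_{13}(m_{r,p})\geq\lfloor(13p-7r+13)/14\rfloor$ read off from \eqref{E:13} together with the inequality $\lfloor X/14\rfloor+\lfloor Y/14\rfloor\geq\lfloor (X+Y-13)/14\rfloor$. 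None of these ingredients appears in your write-up.

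The step most likely to fail if done carelessly is (iv): unlike the $\ell=5,7$ cases, where each term of the recursion contributes a single product of a power of $\ell$ and one previous coefficient, here each $a_{r,p}^{13}$ is a double sum $\sum_{i}\sum_{t}m_{13-i,\,p-t+7}\,a_{r+i,t}^{13}$, and you must verify that \emph{every} summand meets the bound $\lfloor(13p-7r+13)/14\rfloor$; this requires the specific subadditivity property of $\lfloor\cdot/14\rfloor$ quoted above, not just ``the same computation as before.'' You also assert nonvanishing ranges for $a_{r,p}$ (e.g.\ $a_{r,p}\neq 0$ for $\lfloor(7r+12)/13\rfloor\leq p\leq 0$ when $r<0$) without indicating how these would be established; in the paper this is read off from the explicit base-case matrix and propagated through the recursion. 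As written, the proposal contains no verifiable mathematics for this lemma.
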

\begin{proof}
For $r>0$, this is proved in \cite{AO}. For $r<0$, we modify \eqref{E:13} by dividing it by $\phi_{13}^{13}(z)$ and $g_{13}^7(z)$. Then we derive $S_{r,13}(z)$ for negative $r$ using theorem \ref{T:3.1}.
\begin{equation}\label{E:b13}
S_{r,13}(z):=\sum_{\floor{\frac{7r+12}{13}}\leq p\leq 0}a_{r,p}^{13}g_{13}^{p}(z).
\end{equation}
Here $\{a_{r,p}^{13}\}_{r<0, p \leq 0}$ given in the matrix below, \\

$\left[\begin{smallmatrix}
\cdots&& & & & & &  13 \\
\cdots&& & & & & -2\cdot 13&  -13 \\
\cdots&& & & & & & 13^2\\
\cdots&& & & & 26 & 12\cdot 13^3 & 467\cdot 13^3\\
\cdots&& & & & -260 & 2018\cdot 13^2& 467\cdot 13^3\\
\cdots&& & & 130 & -20\cdot 13^4 & 36\cdot 13^4 & 2807\cdot 13^3 \\
\cdots&& & & 98\cdot 13& -26336\cdot 13^2 & -12\cdot 13^5 & 935\cdot 13^4\\
\cdots&&& -70\cdot 13& 84\cdot 13^4 & -684\cdot 13^4 & -120\cdot 13^5&3743\cdot 13^4\\
\cdots& && - 162\cdot 13&  176544\cdot13^2&  - 15996\cdot13^3 &- 9030\cdot13^4& 937.13^5\\
\cdots&&  238\cdot 13^5 & -396\cdot 13^4 & 4740\cdot 13^4 & -192\cdot 13^5 & -24\cdot 13^7 & 1871\cdot 13^5\\
\cdots&& -902\cdot 13 & -737836\cdot 13^2 & 210588\cdot 13^3 & 10722 \cdot 13^4 & -17806\cdot 13^5 & -13^6\\
\cdots &-418\cdot 13& 1260\cdot 13^4 & -16812\cdot 13^4 & 7416\cdot 13^5 & 120\cdot 13^7 & -336\cdot 13^7 & -1873\cdot 13^6\\
 \cdots13& 51\cdot 13^3& 125764\cdot 13^3 &-77470\cdot 13^4& 28214\cdot 13^5 &10000\cdot 13^6 & -815\cdot 13^7 & -72\cdot 13^8\\
 \cdots&\cdots&\cdots&\cdots&\cdots&\cdots&\cdots&\cdots\\
\end{smallmatrix}\right]$\\

From the above matrix we can see that the result holds for $r=-13 $ to $-1$. Now fix $r<-13$. Assume the result hold for all negative numbers greater than $r$. Using theorem \ref{T:3.1}  and \eqref{E:13} we see that $S_{r}$ satisfies the following recursive formula when $r\leq -14$.
\begin{equation}\label{E:13.r}
\begin{aligned}
     S_{r,13}(z):=&\sum_{i=1}^{13}(-1)^{i+1}C_{-i}S_{r+i,13}(z),\\
   \text{where}\;\; C_{-i}:=&(-1)^{i+1}\sum_{\rho=\ceil{\frac{14-i}{2}}}^{7}m_{13-i,\rho}g_{13}^{\rho-7}(z),\;\;\; 1\leq i \leq 13.
\end{aligned}
\end{equation}
Notice that $m_{0,7}:=1$ here. Now combining \eqref{E:b13} and \eqref{E:13.r}, for $r\leq -14$ we have,
\begin{equation}\label{E:c13}
a_{r,p}^{13}=\sum_{i=1}^{13}\sum_{t=\floor{\frac{7r+7i+12}{13}}}^0m_{\footnotesize{13-i,p-t+7}}\cdot a_{\footnotesize{{r+i,t}}}^{13}.   
\end{equation}

Now using \eqref{E:13} we see that,
\[\pi_{13}\left(m_{\footnotesize{r,p}}\right)\geq\left\lfloor\frac{13p-7r+13}{14}\right\rfloor.\]

Then using the induction hypothesis we have,

\begin{equation*}
    \begin{aligned}
        \pi_{13}(a_{r,p}^{13})&=\min_{1\leq i\leq 13,\; \floor{\frac{7r+7i+12}{13}}\leq t\leq 0}\left\{\pi_{13}(m_{13-i,p-t+7})+\pi_{13}(a_{r+i,t}^{13})\right\}.\\
\geq&\left\{\left\lfloor\frac{13(p-t+7)-7(13-i)+13}{14}\right\rfloor+ \left\lfloor\frac{13(t)-7(r+i)+13}{14}\right\rfloor\right\},\\
=&\left\{\left\lfloor\frac{13p-13t+7i+13}{14}\right\rfloor+ \left\lfloor\frac{13t-7r-7i+13}{14}\right\rfloor\right\},\\
\geq&\left\lfloor\frac{13p-7r+13}{14}\right\rfloor,\;\;\;\text{for all}\;\;i, t.
    \end{aligned}
\end{equation*}

Here we used the fact that if $X,Y$ are integers then
\[\left\lfloor\frac{X}{14}\right\rfloor+\left\lfloor\frac{Y}{14}\right\rfloor\geq \left\lfloor\frac{X+Y-13}{14}\right\rfloor.\]
\end{proof}

\begin{lemma}\label{L.3.6}
Let $r$ be a non zero integer.
\begin{enumerate}
    \item $\pi_5\left(S_{5,r}\right)=\pi_{5}\left(a_{r,\floor{\frac{r+4}{5}}}^5\right)=1$ iff $r\not\equiv 1,2\pmod{5}.$
    \item $\pi_7\left(S_{7,r}\right)=\pi_{7}\left(a_{r,\floor{\frac{2r+6}{7}}}^7\right)=1$ iff $r\not\equiv 1,4\pmod{7}.$
    \item $\pi_{13}\left(S_{13,r}\right)=\pi_{13}\left(a_{r,\floor{\frac{7r+12}{13}}}^{13}\right)=1$ iff $r\not\equiv 10\pmod{13}.$
\end{enumerate}
For all the other cases $\ell-$adic order of $S_{\ell,r}$ is greater than or equal to 2.
\end{lemma}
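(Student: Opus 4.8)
My plan is to reduce all three cases, uniformly, to a single finite computation by first establishing the congruence
\[ S_{r,\ell}(z)\equiv \phi_\ell(z)\,S_{r-\ell,\ell}(z)\pmod{\ell^{2}} \]
for every nonzero integer $r$ and every $\ell\in\{5,7,13\}$. Since $\phi_\ell$ has integral $q$-coefficients, the Frobenius congruence gives $\phi_\ell(z)^\ell\equiv\phi_\ell(\ell z)\pmod\ell$, hence $\phi_\ell^r\equiv\phi_\ell^{\,r-\ell}(z)\,\phi_\ell(\ell z)\pmod\ell$. Applying the $\mathbb{Z}$-linear operator $U_\ell$ and using the twisting identity \eqref{E:2.2} with the factor $\phi_\ell(\ell z)=g(\ell z)$ pulled out, one gets $U_\ell(\phi_\ell^r)\equiv\phi_\ell(z)\,U_\ell(\phi_\ell^{\,r-\ell})\pmod\ell$; multiplying through by $\ell$ and invoking \eqref{E:Sr} yields the displayed relation modulo $\ell^2$. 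Because each coefficient $a^\ell_{r,p}$ is divisible by $\ell$ (Lemmas \ref{L:3.2}, \ref{L:3.4}, \ref{L.3.5}), I may write $S_{r,\ell}=\ell\,T_{r,\ell}$ with $T_{r,\ell}$ an integral Laurent polynomial in $g_\ell$, and dividing the congruence by $\ell$ gives the working relation $T_{r,\ell}\equiv\phi_\ell\,T_{r-\ell,\ell}\pmod\ell$.

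Next I would exploit this. Note $\pi_\ell(S_{r,\ell})=1$ (the minimal $\ell$-order among the $g_\ell$-coefficients, attained at a prescribed coefficient) is equivalent to that coefficient of $T_{r,\ell}$ being a unit mod $\ell$, while $\pi_\ell(S_{r,\ell})\ge 2$ is equivalent to $T_{r,\ell}\equiv 0\pmod\ell$. Since $\phi_\ell=q^{d_\ell}(1+\cdots)$ with $d_\ell=(\ell^2-1)/24$ and leading coefficient $1$, it is a unit in $\mathbb{F}_\ell((q))$, so $T_{r,\ell}\equiv 0\pmod\ell\iff T_{r-\ell,\ell}\equiv 0\pmod\ell$; thus the dichotomy depends only on $r\bmod\ell$, in both directions (downward one uses that $\phi_\ell^{-1}$ is again integral). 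Moreover, as $g_\ell=q(1+\cdots)$ has leading coefficient $1$, the bottom $g_\ell$-coefficient of any such Laurent polynomial equals its leading $q$-coefficient; comparing leading $q$-terms in $T_{r,\ell}\equiv\phi_\ell T_{r-\ell,\ell}$ shows the bottom coefficient of $T_{r,\ell}$ is congruent mod $\ell$ to that of $T_{r-\ell,\ell}$, while the bottom index advances by exactly $d_\ell$ (which matches the increments of $\floor{(r+4)/5}$, $\floor{(2r+6)/7}$, $\floor{(7r+12)/13}$). Hence the stronger assertion ``the minimal $\ell$-order is attained at the bottom coefficient and equals $1$'' propagates along each residue class.

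It then remains to check the claim on one complete residue system, for which I would take $r\in\{-1,\dots,-\ell\}$ and read off the explicit values of $S_{r,\ell}$ already recorded in Lemmas \ref{L:3.2}, \ref{L:3.4}, \ref{L.3.5} (equivalently, computed from Newton's formula, Theorem \ref{T:3.1}, applied to \eqref{E:5}, \eqref{E:7}, \eqref{E:13}). Inspecting $T_{r,\ell}\bmod\ell$ shows it vanishes precisely for $r\equiv 1,2\pmod 5$, $r\equiv 1,4\pmod 7$, and $r\equiv 10\pmod{13}$ (for instance $S_{-3,13}=13^2$ forces $T_{-3,13}\equiv 0$, placing $r\equiv 10$ in the exceptional class), and otherwise is a unit at the bottom coefficient. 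I expect the $\ell=13$ base case to be the main obstacle: the floor estimate of Lemma \ref{L.3.5} equals $1$ at the bottom coefficient for every nonzero residue, so it cannot by itself isolate the single bad class $r\equiv 10$; detecting it genuinely requires the value of the bottom coefficient modulo $13^2$, which is exactly what the congruence reduction above supplies and the valuation bounds alone do not.
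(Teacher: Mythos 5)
Your argument is correct, and at the top level it follows the same strategy as the paper's (very terse) proof: show that the dichotomy propagates along each residue class of $r$ modulo $\ell$, then verify one complete set of representatives from the explicitly computed $S_{r,\ell}$. The difference is in how the induction step is obtained. The paper reads it off the Newton recursions attached to the modular equations \eqref{E:5}, \eqref{E:7}, \eqref{E:13}: in each recursion every coefficient except the one multiplying $S_{r-\ell,\ell}$ carries an extra factor of $\ell$, so modulo $\ell^{2}$ one is left with $S_{r,\ell}\equiv g_\ell^{(\ell^2-1)/24}\,S_{r-\ell,\ell}$, and the bottom $g_\ell$-coefficients are compared (the paper only writes this out for $\ell=5$ and $r$ a positive multiple of $5$). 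You instead derive $S_{r,\ell}\equiv\phi_\ell\,S_{r-\ell,\ell}\pmod{\ell^{2}}$ from the Frobenius congruence $\phi_\ell^{\ell}\equiv\phi_\ell(\ell z)\pmod{\ell}$ together with the twisting identity \eqref{E:2.2}; since $g_\ell^{(\ell^2-1)/24}\equiv\phi_\ell\pmod{\ell}$, the two step relations agree, but yours is uniform in $\ell$, treats positive and negative $r$ and all three primes simultaneously, and does not require inspecting the coefficients of the modular equations. Your observation that the congruence controls the entire reduction $T_{r,\ell}\bmod\ell$, not merely its bottom coefficient, is exactly what the ``$\geq 2$'' half of the claim needs, and it already disposes of the $\ell=13$ worry you raise at the end: the exceptional class $r\equiv 10$ is detected by the single base value $S_{-3,13}=13^{2}$, with no appeal to the valuation bounds. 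The only loose end is the routine finite check, which your spot checks cover: for the good residues one verifies that the coefficient at the index $\lfloor(r+4)/5\rfloor$, $\lfloor(2r+6)/7\rfloor$, or $\lfloor(7r+12)/13\rfloor$ is $\ell$ times a unit, and for the bad residues that all of $S_{r,\ell}/\ell$ vanishes modulo $\ell$; the recorded values for $r=-1,\dots,-\ell$ confirm both.
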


\begin{proof}
The proof follows from induction on $r$. To prove the induction step, we use the recursive expression for $S_{\ell,r}$ that can be obtained from the modular equations. We demonstrate this by proving $(1)$ when $r$ is a positive multiple of $5$.\\

By lemma $4$ chapter $8$ in \cite{K}, we have $S_{5,5}$ only divisible by $5$ and $\pi_5(a_{5,1}^5)=1$. Now we assume for $r>1,  \pi_5(a_{5r-5,r-1}^5)=1$. Then by the recursive formula for $S_{5,r}$ we have 
\[S_{5,5r}=5^2g_5S_{5,5r-1}+5^2g_5S_{5r-2}+15g_5S_{5,5r-3}+5g_5S_{5,5r-4}+g_5S_{5,5r-5}.\]
Then the result follows comparing the coefficient of $g_5^r.$

\end{proof}

\begin{remark}\label{R.3.7}
For $\ell=5,7,$ and $13$, the $\ell-$adic order of $S_{\ell,r}$ is equal to the $\ell-$adic order of the coefficient of $g_{\ell}(z)$ with the least power when $r$ is positive.   Otherwise it is the $\ell-$adic order of the coefficient of $g_{\ell}(z)$ with the largest power. This follows from lemma \ref{L.3.6} and inequalities \eqref{E:a5} \eqref{E:a7}, and \eqref{E:a13}.
\end{remark}
\vspace{3mm}

Let $V_{\ell}$ be the vector space of modular functions on $\Gamma_0(\ell)$ where $\ell=2,3,5,7,11,13$ and $17$, which are holomorphic everywhere except possibly at 0 and $\infty$. $V$ is mapped to itself by the linear transformation,
\[T_{\lambda}:f(\tau)\rightarrow U_{\ell}\left(\phi_{\ell}(\tau)^{\lambda}f(\tau)\right),\]
where $\lambda$ is an integer. Let $(C_{\mu,\nu}^{\lambda})_{\mu,\nu}$ be the matrix of the linear transformation $T_\lambda$ with respect to a triangular basis of $V_{\ell}$ .\\

For primes $\ell=2,3,5,7$ and $13$, we can take \{$g_{\ell}^{\nu}(\tau)|\nu\in\mathbb{Z}^+$\} as an upper triangular basis when $\lambda>0$. For $\lambda<0$ we can take  \{$g_{\ell}^{\nu}(\tau)|\nu\in\mathbb{Z}^-$\} as a triangular basis. For $\ell=11$ and $17$, finding such basis is complicated and they were derived by O.L Atkin \cite{A2} and Kim Hughes \cite{H} respectively.\\

Now let \{$J_{\ell,\nu}(\tau)|\nu\in\mathbb{Z}$\} 
be an upper triangular basis for $V_{\ell}$. Then we have
\begin{equation}\label{E:C}
U\left(\phi_{\ell}(\tau)^{\lambda}J_{\ell,\mu}\right)=\sum_{\nu}C_{\mu,\nu}^{\lambda}J_{\ell,\nu}.
\end{equation}
Therefore, the Fourier series of $T_\lambda(J_{\ell,\nu})$ has all coefficients divisible by $\ell$ if and only if,

$$C_{\mu,\nu}^{\lambda}\equiv0\pmod{\ell} \ \mbox{for all $\nu$}.$$

Now we define $\theta_{\ell}(\lambda,\mu)=1$ if all the coefficients of $U(\phi_{\ell}^{\lambda}J_{\ell,\nu})$ divisible by $\ell$. Otherwise we put $\theta_{\ell}(\lambda,\mu)=0$.  \\

\begin{lemma}\label{L:3.5}
For $\ell=5$,
\begin{equation}\label{E:3.5}
\begin{aligned}
\theta_{5}(\lambda,\mu)&=\theta_{5}(\lambda+5,\mu),\\ \theta_{5}(\lambda,\mu+1)&=\theta_{5}(\lambda+6,\mu).
\end{aligned}
\end{equation}

\begin{table}[htp]
\begin{tabular}{|l@{\hspace{16pt}}|*{6}{c|}}
\hline
& \multicolumn{5}{c|}{$\lambda$} \\
\cline{1-6}
\hline
$\mu$& 0 & 1 & 2 & 3 & 4 \\
\hline\hline
\cline{1-6}
0 & 0 & 1 & 1 & 0 & 0 \\
\hline
\end{tabular}\\
\vspace{2mm}
\caption{Values of $\theta_5(\lambda,\mu)$.}\label{t:3.5}
\end{table}
\end{lemma}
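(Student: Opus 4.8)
The plan is to reduce the whole lemma to the single row of the table by means of the two functional equations for $\theta_5$, and then to read that row off from the arithmetic of $S_{r,5}$ already established. The engine is two identities among eta-quotients. First, directly from the definitions $g_5(z)=\{\eta(5z)/\eta(z)\}^6$ and $\phi_5(z)=\eta(25z)/\eta(z)$ one checks the exact identity
\[\phi_5^6(z)=g_5(z)\,g_5(5z),\]
since the quotient $\phi_5^6/g_5=\{\eta(25z)/\eta(5z)\}^6=g_5(5z)$. Second, the modular equation \eqref{E:5} gives $\phi_5^5(z)=g_5(5z)\bigl(1+5\phi_5+15\phi_5^2+25\phi_5^3+25\phi_5^4\bigr)$, so that $\phi_5^5(z)\equiv g_5(5z)\pmod 5$ as $q$-series. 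Throughout I use that $g_5=q+\cdots$ is $q$ times a unit of $\mathbb{Z}[[q]]$, so multiplication by $g_5$ is invertible over $\mathbb{Z}$ on Laurent series; consequently a Laurent series $F$ with integer coefficients has all its coefficients divisible by $5$ if and only if $g_5F$ does, and the same equivalence lets one pass freely between the $q$-expansion and the expansion in the triangular basis $\{g_5^p\}$.

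For the second relation I would argue by an exact computation. Writing $T_\lambda(f)=U_5(\phi_5^\lambda f)$, the identity $\phi_5^6=g_5(z)g_5(5z)$ gives
\[T_{\lambda+6}(g_5^\mu)=U_5\bigl(\phi_5^\lambda g_5^\mu\,g_5(z)\,g_5(5z)\bigr)=g_5(z)\,U_5\bigl(\phi_5^\lambda g_5^{\mu+1}\bigr)=g_5(z)\,T_\lambda(g_5^{\mu+1}),\]
where the middle step is \eqref{E:2.2} applied with the factor $g_5(5z)$ pulled outside $U_5$. By the unit property of $g_5$, the left side has all coefficients divisible by $5$ iff $T_\lambda(g_5^{\mu+1})$ does, i.e. $\theta_5(\lambda+6,\mu)=\theta_5(\lambda,\mu+1)$. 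For the first relation the same manipulation is run modulo $5$: since $\phi_5^5\equiv g_5(5z)\pmod 5$ and $U_5$ is $\mathbb{Z}$-linear on coefficients,
\[T_{\lambda+5}(g_5^\mu)=U_5\bigl(\phi_5^\lambda g_5^\mu\,\phi_5^5\bigr)\equiv U_5\bigl(\phi_5^\lambda g_5^\mu\,g_5(5z)\bigr)=g_5(z)\,T_\lambda(g_5^\mu)\pmod 5,\]
and again the unit property gives $\theta_5(\lambda+5,\mu)=\theta_5(\lambda,\mu)$. These two relations let one move any $(\lambda,\mu)$ to the row $\mu=0$ with $\lambda\in\{0,1,2,3,4\}$.

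It remains to verify the five entries $\theta_5(\lambda,0)$, $0\le\lambda\le4$. Here $J_{5,0}=1$, so $T_\lambda(1)=U_5(\phi_5^\lambda)$. For $\lambda=0$ this is $U_5(1)=1$, whose $q^0$-coefficient is not divisible by $5$, giving $\theta_5(0,0)=0$. For $1\le\lambda\le4$ I use $S_{\lambda,5}=5\,U_5(\phi_5^\lambda)$ from \eqref{E:Sr}, so that $U_5(\phi_5^\lambda)$ has all coefficients divisible by $5$ precisely when $\pi_5(S_{\lambda,5})\ge 2$. By Lemma \ref{L:3.2} and Remark \ref{R.3.7}, $\pi_5(S_{\lambda,5})$ equals the $5$-adic order of its extreme ($g_5$-least-power) coefficient, and Lemma \ref{L.3.6}(1) then says this order is $1$ exactly when $\lambda\not\equiv1,2\pmod 5$ and is $\ge2$ when $\lambda\equiv1,2\pmod5$. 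Hence $\theta_5(\lambda,0)=1$ iff $\lambda\equiv1,2\pmod5$, which for $\lambda=0,1,2,3,4$ returns the stated values $0,1,1,0,0$.

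The only genuinely delicate point is the pair of eta-quotient identities that power the reduction: recognizing $\phi_5^6=g_5(z)g_5(5z)$ and $\phi_5^5\equiv g_5(5z)\pmod5$ is what converts a shift in $\lambda$ into multiplication by $g_5$ (harmless for divisibility) together with a shift in $\mu$. Once these are in hand, everything else is bookkeeping: the combination of \eqref{E:2.2} with the unit property of $g_5$ is routine, and the table is a direct transcription of Lemma \ref{L.3.6}(1). I would double-check only that no fractional powers of $q$ intrude—both $\phi_5$ and $g_5$ lie in $q\,\mathbb{Z}[[q]]$—so that $U_5$ acts by the plain coefficient selection of \eqref{E:2.1} and respects congruences mod $5$.
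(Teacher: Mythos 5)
Your proposal is correct and follows essentially the same route as the paper: the paper likewise derives the shift $\lambda\mapsto\lambda+5$ from the modular equation \eqref{E:5} reduced mod $5$ (i.e.\ $\phi_5^5\equiv g_5(5z)$) together with \eqref{E:2.2}, and the shift $(\lambda,\mu+1)\mapsto(\lambda+6,\mu)$ from $\phi_5^6=g_5(z)g_5(5z)$, phrasing the conclusion as congruences among the matrix entries $C^{\lambda}_{\mu,\nu}$ rather than via your "unit property" of $g_5$ — the same argument in different clothing. Your verification of the table row is if anything more complete than the paper's, which cites only the inequality \eqref{E:a5}, whereas the zero entries genuinely require the sharpness statement of Lemma \ref{L.3.6} that you invoke.
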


\begin{proof}

First notice that using \eqref{E:5} and \eqref{E:2.2} we have that

\[U\left(\phi_5^{\lambda+5}(\tau)g_5^{\mu}(\tau)\right)=g_5(\tau)U\left(\phi_5^{\lambda}(\tau)g_5^{\mu}(\tau)\right)\pmod{5}.\]
Now using \eqref{E:C} we have,
\[C_{\mu,\nu}^{\lambda+5}\equiv C_{\mu,\nu-1}^{\lambda}\pmod{5}.\]
Hence we have the first equality of \eqref{E:3.5}. To get the second equality, consider that,
\[U\left(\phi_5^{\lambda}(\tau)g_{5}^{\mu+1}(\tau)\right)=g_5^{-1}(\tau)U\left(\phi_5^{\lambda+6}(\tau)g_5^{\mu}(\tau)\right).\]
Hence we have
\[C_{\mu+1,\nu}^{\lambda}=C_{\mu,\nu+1}^{\lambda+6}.\]
Therefore we have the second equality of \eqref{E:3.5}. Now using \eqref{E:a5}, we can find $\theta_5(\lambda,\mu)$ values for all $\lambda$ and $\mu$.
\end{proof}

\begin{lemma}\label{L:3.6}
For $\ell=7,$
\begin{equation}\label{E:3.7}
    \begin{aligned}
    \theta_7(\lambda,\mu)&=\theta_7(\lambda-7,\mu),\\ \theta_7(\lambda,\mu+1)&=\theta_7(\lambda+4,\mu).
    \end{aligned}
\end{equation}

    \begin{table}[htp]
    \begin{tabular}{|l@{\hspace{16pt}}|*{8}{c|}}
\hline
& \multicolumn{7}{c|}{$\lambda$} \\
\cline{1-8}
\hline
$\mu$& 0 & 1 & 2 & 3 & 4 & 5 & 6\\
\hline\hline
\cline{1-6}
0 & 0 & 1 & 0 & 0 & 1 & 0 & 0 \\
\hline
\end{tabular}\\
\vspace{2mm}
    \caption{Values of $\theta_{7}(\lambda,\mu)$.}\label{t:3.6}
    \end{table}

First notice that using \eqref{E:7} and \eqref{E:2.2}, 
\[U\left(\phi_7^{\lambda+7}(\tau)g_7^{\mu}(\tau)\right)=g_7^2(\tau)U\left(\phi_7^{\lambda}(\tau)g_7^{\mu}(\tau)\right)\pmod{7}.\]
Now using \eqref{E:C} we have that,
\[C_{\mu,\nu}^{\lambda+7}\equiv C_{\mu,\nu-2}^{\lambda}\pmod{7}.\]
Hence we have the first equality of \eqref{E:3.7}. To get the second equality, consider that,
\[U\left(\phi_7^{\lambda}(\tau)g_{7}^{\mu+1}(\tau)\right)=g_7^{-1}(\tau)U\left(\phi_7^{\lambda+4}(\tau)g_7^{\mu}(\tau)\right).\]
Hence we have
\[C_{\mu+1,\nu}^{\lambda}=C_{\mu,\nu+1}^{\lambda+4}.\]
Therefore we have the second equality of \eqref{E:3.7}. Now using \eqref{E:a7}, we can find $\theta_7(\lambda,\mu)$ values for all $\lambda$ and $\mu$.
\end{lemma}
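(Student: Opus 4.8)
The plan is to mirror the $\ell=5$ argument of Lemma \ref{L:3.5}, replacing the modular equation \eqref{E:5} by \eqref{E:7} and keeping track of the two shifts that control $\theta_7$. Recall that $\theta_7(\lambda,\mu)=1$ exactly when every entry $C_{\mu,\nu}^\lambda$ of \eqref{E:C} satisfies $C_{\mu,\nu}^\lambda\equiv0\pmod7$. Thus, to prove the functional equations \eqref{E:3.7}, it suffices to show that each of the two parameter shifts relates the family $\{C_{\mu,\nu}^\lambda\}_\nu$ to another such family by a reindexing in $\nu$ (up to a unit modulo $7$), since such a reindexing preserves the vanishing of the whole family modulo $7$.

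First I would reduce \eqref{E:7} modulo $7$. In the $g_7^2(7z)$-bracket every coefficient except the constant term is divisible by $7$ (indeed $343=7^3$, $147=3\cdot7^2$, $49=7^2$, $21=3\cdot7$), and the entire $g_7(7z)$-bracket has coefficients $7,35,49$; hence \eqref{E:7} collapses to $\phi_7^7(z)\equiv g_7^2(7z)\pmod7$. Multiplying by $\phi_7^\lambda g_7^\mu$, applying $U_7$, and invoking \eqref{E:2.2} with $g(z)=g_7^2(z)$ gives
\[
U\!\left(\phi_7^{\lambda+7}(\tau)g_7^\mu(\tau)\right)\equiv g_7^2(\tau)\,U\!\left(\phi_7^\lambda(\tau)g_7^\mu(\tau)\right)\pmod7,
\]
which by \eqref{E:C} reads $C_{\mu,\nu}^{\lambda+7}\equiv C_{\mu,\nu-2}^\lambda\pmod7$. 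This is a pure shift $\nu\mapsto\nu-2$, so row $\mu$ vanishes modulo $7$ at $\lambda+7$ iff it does at $\lambda$, which is the first equation of \eqref{E:3.7}.

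For the second equation I would use the exact eta-quotient identity
\[
\phi_7^4(z)=\Big(\tfrac{\eta(49z)}{\eta(z)}\Big)^4=\Big(\tfrac{\eta(7z)}{\eta(z)}\Big)^4\Big(\tfrac{\eta(49z)}{\eta(7z)}\Big)^4=g_7(z)\,g_7(7z),
\]
valid because the defining exponent of $g_7$ is $4$. Writing $\phi_7^{\lambda+4}g_7^\mu=\big(\phi_7^\lambda g_7^{\mu+1}(z)\big)\,g_7(7z)$ and applying $U_7$ with \eqref{E:2.2} yields $U(\phi_7^\lambda g_7^{\mu+1})=g_7^{-1}(\tau)\,U(\phi_7^{\lambda+4}g_7^\mu)$ — an exact identity, not merely a congruence — whence $C_{\mu+1,\nu}^\lambda=C_{\mu,\nu+1}^{\lambda+4}$ and the second equation of \eqref{E:3.7}. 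The table is then pinned down by the base row $\mu=0$: by \eqref{E:Sr} we have $U_7(\phi_7^\lambda)=\tfrac17 S_{\lambda,7}=\sum_\nu\tfrac17 a_{\lambda,\nu}^7 g_7^\nu$, so $C_{0,\nu}^\lambda=\tfrac17 a_{\lambda,\nu}^7$ and $\theta_7(\lambda,0)=1$ precisely when $\pi_7(a_{\lambda,\nu}^7)\ge2$ for all $\nu$, i.e.\ (by Remark \ref{R.3.7}) when $\pi_7(S_{\lambda,7})\ge2$. For $1\le\lambda\le6$, Lemma \ref{L.3.6}(2) gives $\pi_7(S_{\lambda,7})\ge2$ iff $\lambda\equiv1,4\pmod7$, while $\lambda=0$ is immediate from $U_7(1)=1$; this reproduces the row $0,1,0,0,1,0,0$, and \eqref{E:3.7} propagates it to all $(\lambda,\mu)$.

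The only real obstacle is the mod-$7$ reduction of the bulkier equation \eqref{E:7}: once it collapses to $\phi_7^7\equiv g_7^2(7z)$, the rest is the literal $\ell=7$ transcription of the $\ell=5$ computation. The one point needing care throughout is the bookkeeping that turns the entrywise relations $C_{\mu,\nu}^{\lambda+7}\equiv C_{\mu,\nu-2}^\lambda$ and $C_{\mu+1,\nu}^\lambda=C_{\mu,\nu+1}^{\lambda+4}$ into statements about the Boolean $\theta_7$ — namely, that a row of the matrix vanishes modulo $7$ iff any $\nu$-reindexed row does. I would state this reindexing principle once and reuse it for both equations, and I would also note that the shift exponents $2$ and $4$ are exactly $(\ell^2-1)/24$ and the defining exponent of $g_\ell$, which is why the argument runs uniformly across $\ell=5,7,13$.
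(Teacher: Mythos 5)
Your proposal is correct and follows essentially the same route as the paper: the congruence $C_{\mu,\nu}^{\lambda+7}\equiv C_{\mu,\nu-2}^{\lambda}\pmod{7}$ from reducing \eqref{E:7} modulo $7$, the exact identity $C_{\mu+1,\nu}^{\lambda}=C_{\mu,\nu+1}^{\lambda+4}$ from $\phi_7^4(z)=g_7(z)g_7(7z)$, and the base row $\mu=0$ read off from the $7$-adic orders of the $a_{\lambda,\nu}^7$. You merely make explicit several steps the paper leaves implicit (the coefficient-by-coefficient reduction of \eqref{E:7}, the eta-quotient identity behind the second shift, and the use of Lemma \ref{L.3.6} and Remark \ref{R.3.7} to pin down when the bound in \eqref{E:a7} is attained), which is a faithful elaboration rather than a different argument.
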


\begin{lemma}\label{L:3.7}
For $\ell=13,$

\begin{equation}\label{E:3.13}
\begin{aligned}
\theta_{13}(\lambda,\mu)&=\theta_{13}(\lambda-13,\mu),\\ \theta_{13}(\lambda,\mu+1)&=\theta_{13}(\lambda+2,\mu).
\end{aligned}
\end{equation}

\begin{center}
    \begin{table}[htp]
    \begin{tabular}{|l@{\hspace{12pt}}|*{13}{c|}}
\hline
& \multicolumn{13}{c|}{$\lambda$} \\
\cline{1-13}
\hline
$\mu$& 0 & 1 & 2 & 3 & 4 & 5 & 6 & 7 & 8 & 9 & 10 & 11 & 12\\
\hline\hline
\cline{1-6}
0 & 0 & 0 & 0 & 0 & 0 & 0 & 0 & 0 & 0 & 0 & 1 & 0 & 0\\
\hline
\end{tabular}\\
\vspace{2mm}
    \caption{Values of $\theta_{13}(\lambda,\mu)$.}\label{t:3.7}
    \end{table}
    \end{center}
\end{lemma}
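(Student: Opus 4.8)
The plan is to follow verbatim the scheme already used for Lemmas \ref{L:3.5} and \ref{L:3.6}: first establish the two shift relations in \eqref{E:3.13}, which reduce every value $\theta_{13}(\lambda,\mu)$ to one lying in the single row $\mu=0$, $0\le\lambda\le 12$, and then compute that row directly from the divisibility data for $S_{r,13}(z)$. Writing $J_{13,\mu}=g_{13}^{\mu}$ for the triangular basis, the defining relations \eqref{E:C} convert every statement about the $U$-operator into a statement about the residues of the entries $C_{\mu,\nu}^{\lambda}$, and $\theta_{13}(\lambda,\mu)$ simply records whether the whole $\mu$-th row of that matrix vanishes modulo $13$.

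For the first relation I would reduce the modular equation \eqref{E:13} modulo $13$. Scanning the matrix $(m_{r,p})$, every entry carries a factor of $13$ except the bottom entry $m_{13,7}=-1$, so modulo $13$ the equation collapses to $\phi_{13}^{13}(z)\equiv g_{13}^{7}(13z)\pmod{13}$. Multiplying by $\phi_{13}^{\lambda}g_{13}^{\mu}(z)$, applying $U$, and using \eqref{E:2.2} to pull the factor $g_{13}^{7}(13z)$ outside gives
\[U\!\left(\phi_{13}^{\lambda+13}(z)g_{13}^{\mu}(z)\right)\equiv g_{13}^{7}(z)\,U\!\left(\phi_{13}^{\lambda}(z)g_{13}^{\mu}(z)\right)\pmod{13},\]
so that $C_{\mu,\nu}^{\lambda+13}\equiv C_{\mu,\nu-7}^{\lambda}\pmod{13}$ by \eqref{E:C}. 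Since a shift in $\nu$ cannot change whether an entire row is divisible by $13$, this yields the first equality $\theta_{13}(\lambda,\mu)=\theta_{13}(\lambda-13,\mu)$.

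For the second relation I would use the \emph{exact} eta-quotient identity $\phi_{13}^{2}(z)=g_{13}(13z)\,g_{13}(z)$, which follows immediately from $g_{13}(z)=(\eta(13z)/\eta(z))^{2}$ and $\phi_{13}(z)=\eta(169z)/\eta(z)$ by cancelling the factor $\eta(13z)$. Then, applying \eqref{E:2.2} once more,
\[g_{13}(z)\,U\!\left(\phi_{13}^{\lambda}(z)g_{13}^{\mu+1}(z)\right)=U\!\left(\phi_{13}^{\lambda}(z)g_{13}^{\mu}(z)\,\phi_{13}^{2}(z)\right)=U\!\left(\phi_{13}^{\lambda+2}(z)g_{13}^{\mu}(z)\right),\]
i.e. $U(\phi_{13}^{\lambda}g_{13}^{\mu+1})=g_{13}^{-1}U(\phi_{13}^{\lambda+2}g_{13}^{\mu})$, hence $C_{\mu+1,\nu}^{\lambda}=C_{\mu,\nu+1}^{\lambda+2}$ and the second equality $\theta_{13}(\lambda,\mu+1)=\theta_{13}(\lambda+2,\mu)$. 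The shift $+2$ is exactly the defining exponent of $g_{13}$, paralleling the shifts $+6$ and $+4$ for $\ell=5,7$.

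Finally I would fill in the base row. For $\lambda\ge 1$, \eqref{E:Sr} gives $U(\phi_{13}^{\lambda})=\tfrac{1}{13}S_{\lambda,13}(z)$, so $\theta_{13}(\lambda,0)=1$ exactly when every coefficient of $S_{\lambda,13}(z)$ is divisible by $13^{2}$, that is when $\pi_{13}(S_{\lambda,13})\ge 2$. By Remark \ref{R.3.7} the $13$-adic order of $S_{\lambda,13}$ is attained at the coefficient of lowest power of $g_{13}$, and Lemma \ref{L.3.6}(3) states that this order equals $1$ precisely when $\lambda\not\equiv 10\pmod{13}$ and is $\ge 2$ otherwise. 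Thus $\theta_{13}(\lambda,0)=1$ iff $\lambda\equiv 10\pmod{13}$; among $1\le\lambda\le 12$ only $\lambda=10$ qualifies, while the trivial case $U(\phi_{13}^{0})=1$ gives $\theta_{13}(0,0)=0$. This reproduces Table \ref{t:3.7}, and together with \eqref{E:3.13} it pins down $\theta_{13}(\lambda,\mu)$ for all $\lambda,\mu$. The only genuinely delicate points are the mod-$13$ reduction of the $13\times 7$ coefficient matrix (confirming that $m_{13,7}$ is the sole unit) and the correct invocation of Remark \ref{R.3.7} to read the order of $S_{\lambda,13}$ off its extreme coefficient; the rest is a direct transcription of the $\ell=5,7$ arguments.
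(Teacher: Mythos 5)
Your argument is correct and follows the same route as the paper: reduce the modular equation \eqref{E:13} modulo $13$ (where $m_{13,7}=-1$ is the only unit entry) to get the $\lambda\mapsto\lambda+13$ shift, use the eta-quotient identity $\phi_{13}^{2}(z)=g_{13}(13z)g_{13}(z)$ to get $C_{\mu+1,\nu}^{\lambda}=C_{\mu,\nu+1}^{\lambda+2}$, and then fill in the row $\mu=0$ from the $13$-adic data for $S_{\lambda,13}$. If anything, your treatment of the base row is more careful than the paper's, which cites only the inequality \eqref{E:a13} (insufficient at the extreme coefficient for $\lambda=10$) where you correctly invoke Lemma \ref{L.3.6} and Remark \ref{R.3.7}; note also that the paper's second display contains a typo ($\lambda+13$ where $\lambda+2$ is meant), which your explicit identity repairs.
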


\begin{proof}
Again notice that using \eqref{E:13} and \eqref{E:C} we have that
\[U\left(\phi_{13}^{\lambda+13}(\tau)g_{13}^{\mu}(\tau)\right)=g_{13}^7(\tau)U\left(\phi_{13}^{\lambda}(\tau)g_{13}^{\mu}(\tau)\right)\pmod{13}.\]
This implies that
\[C_{\mu,\nu}^{\lambda+13}\equiv C_{\mu,\nu-7}^{\lambda}\pmod{13}.\]
Hence we have the first equality of \eqref{E:3.13}. To get the second equality, consider that,
\[U\left(\phi_{13}^{\lambda}(\tau)g_{13}^{\mu+1}(\tau)\right)=g_{13}^{-1}(\tau)U\left(\phi_{13}^{\lambda+13}(\tau)g_{13}^{\mu}(\tau)\right).\]
Hence we have
\[C_{\mu+1,\nu}^{\lambda}=C_{\mu,\nu+1}^{\lambda+2}.\]
Therefore we have the second equality of \eqref{E:3.13}. Now using \eqref{E:a13}, we can find $\theta_{13}(\lambda,\mu)$ values for all $\lambda$ and $\mu$.
\end{proof}

\begin{lemma}\label{L:3.9}
For $\ell=17,$
\begin{equation}\label{E:3.17}
\begin{aligned}
\theta_{17}(\lambda,\mu)&=\theta_{17}(\lambda-17,\mu),\\ \theta_{17}(\lambda,\mu)&=\theta_{17}(\lambda+6,\mu-4).
\end{aligned}
\end{equation}

\begin{center}
    \begin{table}[htp]
    \begin{tabular}{|l@{\hspace{9pt}}|*{17}{c|}}
\hline
& \multicolumn{17}{c|}{$\lambda$} \\
\cline{1-17}
\hline
$\mu$& 0 & 1 & 2 & 3 & 4 & 5 & 6 & 7 & 8 & 9 & 10 & 11 & 12 & 13 & 14 & 15 & 16\\
\hline\hline
\cline{1-18}
0 & 0 & 0 & 0 & 1 & 0 & 0 & 0 & 0 & 0 & 0 & 0 & 0 & 0 & 0 & 0 & 0 & 0\\
\hline
1 & 0 & 0 & 0 & 1 & 0 & 0 & 0 & 0 & 0 & 0 & 0 & 0 & 0 & 0 & 0 & 0 & 0\\
\hline
2 & 0 & 0 & 0 & 1 & 0 & 0 & 0 & 0 & 0 & 0 & 0 & 0 & 0 & 0 & 0 & 0 & 0\\
\hline
3 & 0 & 0 & 0 & 1 & 0 & 0 & 0 & 0 & 0 & 0 & 0 & 0 & 0 & 0 & 1 & 0 & 0\\
\hline
\end{tabular}\\
\vspace{2mm}
    \caption{Values of $\theta_{17}(\lambda,\mu)$.}\label{t:3.9}
    \end{table}
    \end{center}
\end{lemma}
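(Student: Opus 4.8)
The plan is to establish the two congruence identities in \eqref{E:3.17} first; together they collapse the computation of $\theta_{17}(\lambda,\mu)$ onto the finite region $0\le\lambda\le 16$, $0\le\mu\le 3$ recorded in Table \ref{t:3.9}. The first identity makes $\theta_{17}$ periodic in $\lambda$ with period $17$, and the second lowers $\mu$ by $4$ at the cost of raising $\lambda$ by $6$; iterating, every pair $(\lambda,\mu)$ becomes congruent to one with $\mu\in\{0,1,2,3\}$ (the residues modulo $4$) and then $\lambda\in\{0,\dots,16\}$. The entries of that finite table are the genuinely new input and must be read off from Hughes' explicit triangular basis for $V_{17}$ in \cite{H}. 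Throughout I argue coefficientwise: for an integer $q$-series $h$ with unit leading coefficient, $h\cdot F$ has every Fourier coefficient divisible by $17$ if and only if $F$ does, so multiplying the output of $U$ by such an $h$ never changes the value of $\theta_{17}$.

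For the first identity I would follow the proofs of Lemmas \ref{L:3.5}--\ref{L:3.7} verbatim. Here $g_{17}=g_{17,r}$ with $r=3$ the least positive integer satisfying $r(17-1)\equiv0\pmod{24}$. Reducing Hughes' modular equation modulo $17$ (exactly as \eqref{E:5}, \eqref{E:7} and \eqref{E:13} reduce for the smaller primes) kills every intermediate term and leaves $\phi_{17}^{17}\equiv g_{17}^{6}(17z)\pmod{17}$, the exponent $6=(\ell+1)/r=18/3$ being forced by comparing leading $q$-orders. Then, using \eqref{E:2.2} with $p=17$,
\[
U\!\left(\phi_{17}^{\lambda+17}J_{17,\mu}\right)=U\!\left(\phi_{17}^{\lambda}\,\phi_{17}^{17}\,J_{17,\mu}\right)\equiv U\!\left(\phi_{17}^{\lambda}J_{17,\mu}\,g_{17}^{6}(17z)\right)=g_{17}^{6}(z)\,U\!\left(\phi_{17}^{\lambda}J_{17,\mu}\right)\pmod{17}.
\]
Since $g_{17}^{6}(z)$ has integer coefficients and unit leading coefficient, the coefficientwise principle gives $\theta_{17}(\lambda+17,\mu)=\theta_{17}(\lambda,\mu)$.

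For the second identity I would use the exact eta-quotient factorization
\[
\phi_{17}^{3}=g_{17}(z)\,g_{17}(17z),\qquad\text{hence}\qquad \phi_{17}^{6}=g_{17}^{2}(z)\,g_{17}^{2}(17z),
\]
which follows at once from $g_{17}=(\eta(17z)/\eta(z))^{3}$ and $\phi_{17}=\eta(17^{2}z)/\eta(z)$ by cancelling $\eta(17z)$. Applying \eqref{E:2.2} once more,
\[
U\!\left(\phi_{17}^{\lambda+6}J_{17,\mu-4}\right)=U\!\left(\phi_{17}^{\lambda}\,g_{17}^{2}(z)J_{17,\mu-4}\,g_{17}^{2}(17z)\right)=g_{17}^{2}(z)\,U\!\left(\phi_{17}^{\lambda}\,g_{17}^{2}(z)J_{17,\mu-4}\right).
\]
The decisive point is the structural property of Hughes' basis that multiplication by $g_{17}^{2}$ (which vanishes to order $4$ at $\infty$) shifts the triangular index by exactly $4$, i.e. $g_{17}^{2}(z)J_{17,\mu-4}=J_{17,\mu}$. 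Substituting this turns the right-hand side into $g_{17}^{2}(z)\,U(\phi_{17}^{\lambda}J_{17,\mu})$, and the coefficientwise principle again yields $\theta_{17}(\lambda+6,\mu-4)=\theta_{17}(\lambda,\mu)$.

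The hard part is everything that rests on Hughes' genus-one basis. For $\ell=5,7,13$ the space $V_{\ell}$ has the transparent basis $\{g_{\ell}^{\nu}\}$, so the analogous index shift $g_{\ell}\,J_{\ell,\mu}=J_{\ell,\mu+1}$ is immediate and the base values follow from the valuation bounds \eqref{E:a5}--\eqref{E:a13}. Because $X_{0}(17)$ has genus $1$, no single generator exists, and both the clean shift $g_{17}^{2}J_{17,\mu-4}=J_{17,\mu}$ and the $68$ base values $\theta_{17}(\lambda,\mu)$ for $0\le\lambda\le16$, $0\le\mu\le3$ must be extracted from the explicit triangular basis and the explicit reduction of $U_{17}$ modulo $17$ worked out in \cite{H}. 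Concretely, I would take $J_{17,0},\dots,J_{17,3}$ as Hughes' four seed functions, define $J_{17,\mu+4}:=g_{17}^{2}J_{17,\mu}$ so that the shift holds by construction, compute each $U(\phi_{17}^{\lambda}J_{17,\mu})\bmod 17$ for the finitely many seeds, and record whether all of its coefficients vanish; this produces Table \ref{t:3.9}, after which the two identities propagate the values to all $(\lambda,\mu)$.
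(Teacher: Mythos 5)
Your proposal is essentially correct, but it is worth noting that the paper does not actually prove this lemma at all: its entire proof is the citation ``See \cite[p.~518]{H}.'' You have instead reconstructed the argument by transporting the template of Lemmas \ref{L:3.5}--\ref{L:3.7} to $\ell=17$, and the two driving identities you isolate are the right ones: $r=3$ gives $g_{17}=(\eta(17z)/\eta(z))^3$, the exact factorization $\phi_{17}^{3}=g_{17}(z)g_{17}(17z)$ holds by cancelling $\eta(17z)$, and $\phi_{17}^{17}\equiv g_{17}^{6}(17z)\pmod{17}$ follows (independently of the full modular equation) from $\prod(1-q^{n})^{17}\equiv\prod(1-q^{17n})\pmod{17}$; combined with \eqref{E:2.2} and the unit-leading-coefficient observation these give the two shift relations in \eqref{E:3.17}. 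What your route buys is an actual derivation of the periodicities that the paper leaves implicit; what it does not (and cannot, without opening \cite{H}) supply is exactly what the paper also outsources, namely the genus-one basis and the $68$ base values in Table \ref{t:3.9}. The one soft spot you should be explicit about is the step $g_{17}^{2}(z)J_{17,\mu-4}=J_{17,\mu}$: if you \emph{define} $J_{17,\mu+4}:=g_{17}^{2}J_{17,\mu}$ from four seeds you must still check that the resulting family is a triangular basis of $V_{17}$ (distinct orders at $\infty$, correct behavior at $0$), and that the table is computed with respect to that same basis rather than Hughes' possibly differently normalized one; a relation of the weaker form $g_{17}^{2}J_{17,\mu-4}=J_{17,\mu}+\sum_{\nu>\mu}c_{\nu}J_{17,\nu}$ would not immediately give equality of the $\theta$'s. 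Since you flag this dependence on \cite{H} explicitly, the proposal is a faithful and somewhat more informative version of what the paper asserts.
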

\begin{proof}
See \cite[p. 518]{H} for a proof. 
\end{proof}

\section{Constructing Modular functions}\label{CMF}

We construct a sequence of modular functions that are the generating functions for the $p_{[1^c{\ell}^d]}(n)$ restricted to certain arithmetic progressions. This generalizes Gordon's construction for "$k-$color"  partitions. Here we use \eqref{E:2.2} repeatedly.

 Let $L_{0,\ell} := 1$ and $\delta_{\ell}:=\frac{\ell^2-1}{24}$,
\begin{align*}
L_{1,\ell}(\tau):&=U_{\ell}\left(\phi_{\ell}(\tau)^c\prod_{n=1}^{\infty}\frac{(1-q^{\ell\cdot n})^d}{(1-q^{\ell\cdot n})^d}\right),\\
&=U_{\ell}\left(q^{\delta_{\ell}\cdot c}\prod_{n=1}^{\infty}\frac{(1-q^{\ell^2\cdot n})^c(1-q^{\ell\cdot n})^d}{(1-q^n)^c(1-q^{\ell\cdot n})^d}\right),\\
&=\prod_{n=1}^{\infty}(1-q^{\ell\cdot n})^c(1-q^n)^d\sum_{m\geq \lceil\frac{\delta_{\ell}\cdot c}{\ell}\rceil}^{\infty}p_{[1^c\ell^d]}(\ell\cdot m-\delta_{\ell}\cdot c)q^m.
\end{align*}

    Similarly we define,
    \begin{align*}
        L_{2,\ell}(\tau) :&=U_{\ell}\left(\phi_{\ell}^d(\tau)L_1(\tau)\right),\\
    L_{2,\ell}(\tau)& =\prod_{n=1}^{\infty}(1-q^{\ell\cdot  n})^d(1-q^n)^c\sum_{m\geq \left\lceil\frac{\delta_{\ell}\cdot d+\lceil\frac{\delta_{\ell}\cdot c}{\ell}\rceil }{\ell}\right\rceil}^{\infty}p_{[1^c\ell^d]}(\ell^2m-\delta_{\ell}\cdot \ell\cdot d -\delta_{\ell}\cdot c)q^m.
    \end{align*}

    Now, to get an equation for higher powers, we define,
    \begin{equation}\label{E:2.8}
    L_{r,\ell}:=U_{\ell}\left(\phi_{\ell}^{\lambda_{r-1}}(\tau)L_{r-1,\ell}\right),
    \end{equation}
    where
    \begin{equation}\label{E:lr}
        \quad \lambda_{r}=
\left\{
	\begin{array}{ll}
		 c & \mbox{if  $r$ is even }, \\
		d & \mbox{if  $r$ is odd}.
	\end{array}
\right.
\end{equation} \\
	
	Then by a short calculation using \eqref{E:2.2}, there exists integers  $n_r$ and $\mu_r$ such that,
	
\begin{align}
\label{E:L_r}
    L_{2r,\ell}(\tau)& =\prod_{n=1}^{\infty}(1-q^n)^c(1-q^{\ell\cdot n})^d\sum_{m\geq \mu_{2r}}p_{[1^c\ell^d]}(\ell^{2r}m+n_{2r})q^m, \\
    \notag
    L_{2r-1,\ell}(\tau)& =\prod_{n=1}^{\infty}(1-q^{\ell\cdot  n})^c(1-q^n)^d\sum_{m\geq \mu_{2r-1}}p_{[1^c\ell^d]}(\ell^{2r-1}m+n_{2r-1})q^m.
\end{align}
\vspace{3mm}

  From \eqref{E:2.8} and \eqref{E:L_r} we can see that,
\[n_{2r,\ell}(c,d)=-\delta_{\ell}\cdot d\cdot \ell^{2r-1}+n_{2r-1},\]
\[n_{2r-1,\ell}(c,d)=-\delta_{\ell}\cdot c\cdot \ell^{2r-2}+n_{2r-2}.\]
Since $n_{0,\ell}=0$, using above recurrence relations we have,
\vspace{2mm}
\begin{itemize}[label={}]
    \item $n_{1,\ell}(c,d)=-\delta_{\ell}\cdot c$
    \item $n_{2,\ell}(c,d)=-\delta_{\ell}\cdot \ell\cdot d -\delta_{\ell}\cdot c$
\end{itemize}
Using the summation of a geometric series,
\begin{align}
\label{E:n_r}
n_{2r-1,\ell}(c,d)&=-c\left(\dfrac{\ell^{2r}-1}{24}\right)-\ell\cdot d\left(\dfrac{\ell^{2r-2}-1}{24}\right).\\
\notag
n_{2r,\ell}(c,d)&=-c\left(\dfrac{\ell^{2r}-1}{24}\right)-\ell\cdot d\left(\dfrac{\ell^{2r}-1}{24}\right) .
\end{align}
From this we have that,
    \[24 \cdot n_{2r-1,\ell}\equiv (c+\ell\cdot d) \mod \ell^{2r-1}  \text{  and  }
    24\cdot n_{2r,\ell}\equiv (c+\ell\cdot d) \mod \ell^{2r}.\] 

    Therefore, each $n_r$ satisfies,

    \[24n_{r,\ell}(c,d)\equiv (c+\ell\cdot d) \mod \ell^r.\]

    Now, we need to find $\mu_r$ in terms of integers $c,d$. Using \eqref{E:2.8}, we have that,

    \begin{equation}
    \label{E:murR}
    \textcolor{black}{\mu_{r,\ell}=\left\lceil \frac{\delta_{\ell}\lambda_{r-1}+\mu_{r-1}}{\ell}\right\rceil.}
    \end{equation}
\vspace{2mm}

    Notice also that $\mu_r$ is the least integer $m$ such that ${\ell}^r m+n_r\geq 0$, which implies that,

    \begin{align}
    \label{E:mur}
    \mu_{2r-1,\ell}& =\left\lceil\frac{\ell\cdot c+d}{24}-\dfrac{c+\ell\cdot d}{24\cdot\ell^{2r-1}}\right\rceil,\\
    \notag
    \mu_{2r,\ell}& =\left\lceil\frac{c+\ell\cdot d}{24}-\dfrac{c+\ell\cdot d}{24\cdot{\ell}^{2r}}\right\rceil.
   \end{align}

Following Gordon, we represent these formulas in the following form.
\begin{align}
\label{E:2.11}
\mu_{2r-1,\ell}& =\left\lceil{\frac{\ell\cdot  c+d}{24}}\right\rceil+\omega_{\ell}(c,d) \quad  \mbox{if $|c+\ell\cdot d|<{\ell}^{2r-1}$},\\
\notag
\mu_{2r,\ell}& =\left\lceil{\frac{c+\ell\cdot d}{24}}\right\rceil+\omega_{\ell}(c,d) \quad \mbox{if $|c+\ell\cdot d|< {\ell}^{2r}$ },\\
\notag
\text{where}\;\;\omega_{\ell}(c,d)&=
\left\{
	\begin{array}{ll}
		 1 & \mbox{if  $c+ \ell\cdot d<0$ and $24|(c+\ell\cdot d)$}, \\
		 0 & \mbox{Otherwise}. \\
	\end{array} \right. 
\end{align}

\section{Proofs of congruences}\label{P}
\vspace{2mm}

Now we define,
\begin{equation}\label{E:2.7}
A_{r,\ell}(c,d):=\sum_{i=0}^{r-1}\theta_{\ell}(\lambda_{i},\mu_i),
\end{equation}
for any positive integer $r$ and integers $c,d$. We put $A_0:=0$.\\
If $$f(\tau):=\sum_{n\geq n_0}a(n)q^n,$$ we define,
\begin{equation}\label{E:5.0}
\pi\left(f(\tau)\right):=\mbox{min}_{n\geq n_0}\left\{\pi(a(n)\right\}.
\end{equation}

We prove $\pi(L_r)\geq A_r(c,d)$.

\begin{proof}
For $\ell=5,7$ and $13$, we calculate,
\begin{equation*}\label{E:5.1}
    \begin{aligned}
    L_{0,\ell}:=&1,\\
    L_{1,\ell}:=&U_{\ell}\left(\phi_{\ell}^{\lambda_0}g_{\ell}^{\mu_0}\right)=\sum_{\nu_1=\mu_1}C_{\mu_0,\nu_1}^{\lambda_0}g_{\ell}^{\nu_1},\;\;\; \text{Here we have $\pi(L_{1,\ell})\geq \theta_{\ell}(\lambda_0,\mu_0)$},\\
    L_{2,\ell}:=&U_{\ell}\left(\phi_{\ell}^{\lambda_1}L_1\right)=U_{\ell}\left(\phi_{\ell}^{\lambda_1}\sum_{\nu_1=\mu_1}C_{\mu_0,\nu_1}^{\lambda_0}g_{\ell}^{\nu_1}\right)=\sum_{\nu_1=\mu_1}C_{\mu_0,\nu_1}^{\lambda_0}U_{\ell}\left(\phi^{\lambda_1}g_{\ell}^{\nu_1}\right),\\
    =&\sum_{\nu_1=\mu_1}C_{\mu_0,\nu_1}^{\lambda_0}\sum_{\nu_2=\mu_2}C_{\nu_1,\nu_2}^{\lambda_1}g_{\ell}^{\nu_2}= \sum_{\nu_2=\mu_2}C_{\mu_0,\mu_1}^{\lambda_0}C_{\mu_1,\nu_2}^{\lambda_1}*g_{\ell}^{\nu_2}.\\
    &\text{Here we have $\pi(L_{2,\ell})\geq \theta_{\ell}(\lambda_0,\mu_0)+\theta_{\ell}(\lambda_1,\mu_1)$}.\\
    \end{aligned}
\end{equation*}

Now by induction we have that,
\begin{equation}
    L_{r,\ell}:=\sum_{\nu_r=\mu_r}C_{\mu_0,\mu_1}^{\lambda_0}C_{\mu_1,\mu_2}^{\lambda_1}\cdots C_{\mu_{r-1},\nu_r}^{\lambda_{r-1}}*g_{\ell}^{\nu_r}.
\end{equation}
Here $\lambda_r$ and $\mu_r$ are defined in the previous section. Also notice that it is sufficient to consider the first coefficient of the series expansion of $S_{\ell,r}$ to get a lower bound for $\pi(L_{r,\ell})$ by remark \ref{R.3.7}.\\
\end{proof}

\begin{proof}[Proof of corollary \ref{C:1.3}]
Recall \eqref{E:2.7},\\
\[A_{r,\ell}(c,d)=\sum_{i=0}^{r-1}\theta_{\ell}(\lambda_i,\mu_i).\]

In \cite{P} we used Gordon's argument from Section 4 of \cite{GB} to obtained the result for $\ell=11$. Here we use similar argument for primes $\ell=5,7,13$ with $k$ replaced by $c + \ell d$, . Note that Gordon's calculations had terms involving $11k$, which will be written in a more symmetric shape here using the fact that $\ell(c+\ell d) \equiv \ell c + d \pmod{24}$ for any prime $\ell \neq 2,3$.

\begin{align*}
A_{r,\ell}(c,d) & =\sum_{i=0}^{\log_{\ell}(c+\ell d)}\theta_{\ell}(\lambda_i,\mu_i)+\sum_{i=\log_{\ell}(c+\ell d)}^{r-1}\theta_{\ell}(\lambda_i,\mu_i) \\
&=\sum_{i=0}^{\log_{\ell}(c+\ell d)}\theta_{\ell}(\lambda_i,\mu_i)+N_{1,\ell}\cdot\theta_{\ell}\left(d,\ceil[\Big]{\dfrac{\ell c+d}{24}}+\omega_{\ell}(c,d)\right) \\
& \qquad \qquad \qquad \qquad +N_{2,\ell}\cdot\theta_{\ell}\left(c,\ceil[\Big]{\dfrac{c+\ell d}{24}}+\omega_{\ell}(c,d)\right).
\end{align*}

Here $N_{1,\ell}$ is the number of odd integers and $N_{2,\ell}$ is the number of even integers in the interval $\bigg(\log_{\ell}|c+\ell d|,r-1\bigg)$ respectively.\\

\begin{equation}\label{E:5.44}
\mbox{Set} \quad \alpha_{\ell}:=\alpha_{\ell}(c,d)=\theta_{\ell}\left(d,\ceil[\Big]{\dfrac{\ell c+d}{24}}+\omega_{\ell}(c,d)\right)+\theta_{\ell}\left(c,\ceil[\Big]{\dfrac{c+\ell d}{24}}+\omega_{\ell}(c,d)\right)
.\end{equation}

Now if $r\leq \log_{\ell}|c+\ell d|+1$ then $N_{1,\ell}=N_{2,\ell}=0$,
\[A_r\leq \log_{\ell}|c+\ell d|.\]

If $r>\log_{\ell}|c+\ell d|+1$,
\[\left|N_{1,\ell}-\frac{1}{2}(r-1-\log_{\ell}(c+\ell d))\right|+\left|N_{2,\ell}-\frac{1}{2}(r-1-\log_{\ell}(c+\ell d))\right|<1.\]
\vspace{2mm}
Now consider,
\[\left|A_{r,\ell}-\frac{1}{2}\alpha_{\ell}(r-1-\log_{\ell}(c+\ell d))\right|<2+\log_{\ell}|c+\ell d|,\]
\[\left|A_{r,\ell}-\frac{\alpha_{\ell} r}{2} \right|<2+\frac{\alpha_{\ell}}{2}+(1+\frac{\alpha_{\ell}}{2})\log_{\ell}|c+\ell d|.\]\\
So we have $A_{r,\ell}=\frac{1}{2}\alpha_{\ell} r+\mathcal{O}\left(\log{}|c+\ell d|\right)$.
\vspace{2mm}\\
Now we  prove the condition for $\alpha_{\ell}$. As in \cite{P}, the proof is complete once we show that $\alpha_{\ell}$ only depends on $c+\ell d$ with the period $24$ when $\ell=5,7$ and $13$. Periodicity follows by lemmas \ref{L:3.5}, \ref{L:3.6}, \ref{L:3.7}, and  $\alpha_{\ell}(c+\ell d)$ is invariant under the maps, 
\[c\rightarrow{c+24-\ell k} \quad\text{and}\quad d\rightarrow{d+k}\quad \mbox{for each integer k}.\]

When $c+\ell d<0$ and $24|(c+\ell d)$,  using \eqref{E:2.11}, $\omega_{\ell}(c,d)$ is $1.$ Thus, using \eqref{E:5.44} we have to change the last column of the table \ref{t:1.3} for entries when $\ell=5$ and $7$.

Also note that when $\ell=17$, $\alpha_{\ell}(c+\ell d)$ is invariant under the following maps by lemma \ref{L:3.9}, 
\[c\rightarrow{c+96-\ell k} \quad\mbox{and}\quad d\rightarrow{d+k}\quad \mbox{for each integer k}.\]
\end{proof}

\begin{remark}
We do not provide a proof for $\ell=17$ here, since it is similar to the proof of theorem $3$ in \cite{H}. In fact, the proof of theorem $1.1$ in \cite{P} and the proof of theorem $2$ in \cite{GB} are similar.
\end{remark}

\begin{proof}[Proof of corollary \ref{C1.5}]
First we assume $A_r(c,d)=0$ for all $r$ and let $\ell=5,7,$ and $13$. Now by \eqref{E:2.7}, $\theta_{\ell}(\lambda_r,\mu_r)=0$ and hence $\pi_{\ell}(U_{\ell}(\phi_{\ell}^{\lambda_r}g_{\ell}^{\mu_r}))=0.$
\begin{equation*}
    L_{r,\ell}=C_{\mu_0,\mu_1}^{\lambda_0}C_{\mu_1,\mu_2}^{\lambda_1}\cdots C_{\mu_{r-1},\mu_r}^{\lambda_{r-1}}g_{\ell}^{\mu_r}+ \cdots.
\end{equation*}
 Since all $C_{\mu_j,\mu_{j+1}}^{\lambda_j}$
are not divisible by $\ell$ for all $j$ from lemma \ref{L.3.6}, we have $\pi_{\ell}\left(L_{r,\ell}\right)=0$.\\

For $\ell=11,17$ we only prove it for $r=1$. Suppose
$A_{1,\ell}(c,d)=0$. Then $\theta_{\ell}(\lambda_0,\mu_0)=0$. Now consider,
\[L_{1,\ell}=\sum_{\nu_1=\mu_1}C_{\mu_0,\nu_1}^{\lambda_0}J_{\ell, \nu_1}.\]
 Since $\theta_{\ell}(\lambda_0,\mu_0)=0$, there is some $s_{\mu}\geq \mu_0$ such that $C_{\mu_0,s_{\mu}}^{\lambda_0}\not \equiv 0\pmod{\ell}$. Hence we have $L_{1,\ell}\not\equiv0\pmod{\ell}.$
 Here $J_{\ell,\nu}$ are the bases used by Gordon in \cite{GB} and by Hughes in \cite{H} for $\ell=11,$ and $17$ respectively.
\end{proof}

\vspace{5mm}
\begin{proof}[Proof of corollary \ref{C:1.7}] 
We first state the result of Chan, Wang, and Yang about representations of $C\Phi_k(q)$ for $k=5, 7, 11,$ and $13$.
\begin{theorem}[H.H. Chan, L. Wang, Y. Yang, 2018 \cite{CWY}]\label{T:1.3}
    Let $C\Phi_k(q)$ be the $k$-Colored generalized Frobenius Partitions,
    \begin{equation}\label{E:1.2}
    C\Phi_5(q)=\prod_{n=1}^{\infty}\frac{1}{1-q^{5n}}+25 q\prod_{n=1}^{\infty}\frac{(1-q^{5n})^5}{(1-q^n)^6}.
    \end{equation}
    \begin{equation}\label{E:1.3}
    C\Phi_7(q)=\prod_{n=1}^{\infty}\frac{1}{1-q^{7 n}}+49\cdot q\prod_{n=1}^{\infty}\frac{(1-q^{7 n})^3}{(1-q^n)^4}+343\cdot q^2\prod_{n=1}^{\infty}\frac{(1-q^{7 n})^7}{(1-q^n)^8}.
    \end{equation}

     \begin{equation}\label{E:1.4}
    C\Phi_{11}(q)=\prod_{n=1}^{\infty}\frac{1}{1-q^{11 n}}+11\sum_{j=1}^{\infty}p(11j-5)q^j.
    \end{equation}

    \begin{equation}\label{E:1.5}
        C\Phi_{13}(q)= \frac{1}{(q^{13};q^{13})_{\infty}}+13\sum_{n=0}^{\infty}p(13n-7)q^n+26\cdot q\prod_{n=1}^{\infty}\frac{(1-q^{13n})}{(1-q^n)^2}.
        \end{equation}

    \end{theorem}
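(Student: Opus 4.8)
The plan is to prove all four identities uniformly as equalities of holomorphic modular forms, reducing each to a finite Fourier-coefficient verification via Sturm's bound. The key preliminary observation is that, writing $\ell=k$ for the prime in question, multiplying the stated identity by $(q;q)_\infty^{\ell}$ turns the left-hand side into the theta series of $Q$,
\[
(q;q)_\infty^{\ell}\,C\Phi_{\ell}(q)=\Theta_{\ell}(q):=\sum_{\vec m\in\mathbb Z^{\ell-1}}q^{Q(\vec m)}.
\]
By the classical theory of theta series of positive-definite quadratic forms, $\Theta_{\ell}(z)$ is a holomorphic modular form of weight $(\ell-1)/2$ on $\Gamma_0(\ell)$ with a quadratic nebentypus $\chi_{\ell}$; the level is exactly $\ell$ because the discriminant of $Q$ equals $\ell$, and this is precisely what matches the level of the eta-quotients appearing on the right. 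First I would make the weight and $\chi_\ell$ explicit (the weights are $2,3,5,6$ for $\ell=5,7,11,13$) so that the common ambient space $M_{(\ell-1)/2}(\Gamma_0(\ell),\chi_{\ell})$ is pinned down.

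Next, rewriting the right-hand sides through $\eta(z)=q^{1/24}(q;q)_\infty$ and $\eta(\ell z)=q^{\ell/24}(q^\ell;q^\ell)_\infty$, I would check that for $\ell=5,7$ every term is a genuine eta-quotient of the correct weight. For instance the $\ell=5$ identity becomes
\[
\Theta_5(z)=\frac{\eta(z)^5}{\eta(5z)}+25\,\frac{\eta(5z)^5}{\eta(z)},
\]
and the $\ell=7$ identity becomes a sum of three weight-$3$ eta-quotients on $\Gamma_0(7)$; that all fractional $q$-powers cancel is the first consistency check. Applying Ligozat's transformation criteria for eta-quotients (Theorem 1.64--1.65 of \cite{O}) then confirms that each such eta-quotient lies in $M_{(\ell-1)/2}(\Gamma_0(\ell),\chi_{\ell})$ and is holomorphic at every cusp.

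For $\ell=11,13$ the right-hand side is not a pure eta-quotient, because of the terms $\sum_j p(\ell j-\delta)q^j$. I would express these through the $U_\ell$ operator: since $\sum_n p(n)q^n=q^{1/24}/\eta(z)$, one has, for example, $\sum_j p(11j-5)q^j=U_{11}\!\left(q^{5}/(q;q)_\infty\right)$, and likewise $\sum_n p(13n-7)q^n=U_{13}\!\left(q^{7}/(q;q)_\infty\right)$. By \eqref{E:2.2} and the standard behaviour of $U_\ell$ on modular forms (in the spirit of Proposition \ref{P:Gamma0N}), after multiplication by $(q;q)_\infty^{\ell}$ each such term is again a weight-$(\ell-1)/2$ modular form on $\Gamma_0(N_\ell)$ for an explicit level $N_\ell$ divisible by $\ell$, so that the whole identity once more lives in a single finite-dimensional space of modular forms on a common congruence subgroup.

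Finally, with both sides placed in one finite-dimensional space, the identity follows by matching $q$-expansions up to the Sturm bound $\tfrac{w}{12}\,[\mathrm{SL}_2(\mathbb Z):\Gamma_0(N_\ell)]$, which is a finite computation: the coefficients of $\Theta_\ell$ are lattice-point counts for $Q$, and those of the eta-quotients are read off from their product expansions. The hard part is the control of holomorphy at the cusps for the $U_\ell$-terms when $\ell=11,13$: one must verify that $(q;q)_\infty^{\ell}\,U_\ell\!\left(q^{\delta}/(q;q)_\infty\right)$ has no poles at any cusp, so that the difference of the two sides is a \emph{holomorphic} form and Sturm's bound genuinely certifies its vanishing. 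Establishing this, together with the explicit determination of the level and nebentypus of $\Theta_\ell$, is where essentially all of the work lies; the remaining weight and level bookkeeping is routine.
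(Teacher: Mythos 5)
This statement is not proved in the paper at all: Theorem \ref{T:1.3} is imported verbatim from Chan--Wang--Yang \cite{CWY} and used as a black box inside the proof of Corollary \ref{C:1.7}, so there is no internal argument to compare against; the relevant comparison is with the original proof in \cite{CWY}. Measured against that, your plan is essentially a reconstruction of their strategy, and it is sound. Multiplying by $(q;q)_\infty^{\ell}$ does turn the left side into the theta series of the $A_{\ell-1}$-type lattice with Gram matrix $I+J$, of determinant $\ell$; one checks that $\ell(I+J)^{-1}=\ell I-J$ is integral with even diagonal (here is where $\ell$ odd enters --- determinant $\ell$ alone does not force level $\ell$), giving a holomorphic form of weight $(\ell-1)/2$ on $\Gamma_0(\ell)$ with nebentypus $\left(\frac{(-1)^{(\ell-1)/2}\ell}{\cdot}\right)$, which matches the Ligozat character of every eta-quotient on the right (e.g.\ your weight-$2$ identity for $\ell=5$ passes the first-coefficient test: $1+20q+\cdots$ against the $20$ minimal vectors of $A_4$). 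Reducing the identities to a Sturm-bound check in a single finite-dimensional space is then legitimate, and is in substance what \cite{CWY} do by working with explicit bases of these spaces.

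One correction of emphasis: the step you single out as ``where essentially all of the work lies'' --- holomorphy at the cusps of the $U_\ell$-terms for $\ell=11,13$ --- is in fact routine. By \eqref{E:2.2}, with $\delta_\ell=\frac{\ell^2-1}{24}$ (so $\delta_{11}=5$ and $\delta_{13}=7$, exactly the shifts in \eqref{E:1.4} and \eqref{E:1.5}),
\[
(q;q)_\infty^{\ell}\,U_\ell\!\left(\frac{q^{\delta_\ell}}{(q;q)_\infty}\right)=U_\ell\!\left(\frac{\eta(\ell z)^{\ell}}{\eta(z)}\right),
\]
and $\eta(\ell z)^{\ell}/\eta(z)$ is already a \emph{holomorphic} modular form of weight $(\ell-1)/2$ on $\Gamma_0(\ell)$ with the same quadratic character as $\Theta_\ell$: by the cusp-order formula of \cite{O} its order at the cusp $0$ is exactly $0$ and at $\infty$ it is $\delta_\ell>0$. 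Since $U_\ell$ is, up to normalization, an average of slash operators $f\mapsto\frac{1}{\ell}\sum_{j}f\!\left(\frac{z+j}{\ell}\right)$, it preserves $M_{(\ell-1)/2}(\Gamma_0(\ell),\chi)$ and holomorphy at every cusp, so the difference of the two sides is automatically holomorphic and Sturm's bound applies with no further work. Thus your sketch is correct, but the genuinely nontrivial content is the level/character bookkeeping for $\Theta_\ell$ and the finite coefficient computation, not cuspidal control of the $U_\ell$-terms.
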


We combine theorem \ref{T:1.1} and theorem \ref{T:1.3} to prove congruences of $k-$ colored generalized Frobenius partitions.
First notice that using \eqref{E:1.2} we have that
\begin{equation}\label{E:5.4}
    C\Phi_5(q)=\sum_{n=0}^{\infty}\bigg({p_{[1^05^1]}(n)+ 25p_{[1^65^{-5}]}(n-1)}\bigg)q^n.
\end{equation}
Now using \eqref{E:n_r} we have that, 
\begin{equation}\label{E:5.5}
    n_{2r,5}(0,1)=\frac{5-5^{2r+1}}{24},\;\;\;n_{2r,5}(6,-5)=\frac{19\cdot 5^{2r}-19}{24}.
\end{equation}
Observe that using \eqref{E:lr} and \eqref{E:mur} we have that
\begin{equation}\label{E:5.6}
\mu_{r,5}(0,1)
=\begin{cases}
& 0\;\;\text{when $r=1$}\\
& 1\;\;\text{when $r\geq 2$}\\
\end{cases} ,\;\;\;\;
\lambda_r(0,1)=\begin{cases}
& 0\;\; \text{when $r$ is even}\\
& 1\;\; \text{when $r$ is odd}
\end{cases}
.\end{equation}

\[\mu_{r,5}(6,-5)
=\begin{cases}
& 2\;\;\text{when $r$ is odd}\\
& 0\;\;\text{when $r$ is even}\\
\end{cases} ,\;
\lambda_r(6,-5)=\begin{cases}
&6\;\; \text{when $r$ is even}\\
&-5\;\; \text{when $r$ is odd}
\end{cases}.
\]
Now using $\theta_5(\lambda,\mu)$ values from Table \ref{t:3.5} we have
\[A_{2r}(0,1)=2r-1\;\;\;,A_{2r}(6,-5)=2r.\] Therefore using theorem \ref{T:1.1} we have that for all $m\geq 1$ and $r\geq 1$,
\begin{equation}\label{E:5.7}
\begin{aligned}
p_{[1^05^1]}\bigg(5^{2r}m+\frac{5-5^{2r+1}}{24}\bigg)\equiv 0\pmod{5^{2r-1}},\\
p_{[1^65^{-5}]}\bigg(5^{2r}m+\frac{19\cdot5^{2r}-19}{24}\bigg)\equiv 0\pmod{5^{2r}}.
\end{aligned}
\end{equation}
Now using \eqref{E:5.4} and the fact that \eqref{E:5.7} is true when $m$ replaced by $m+1$, we can see that \eqref{E:1.6} is true.\\

Now we prove \eqref{E:1.7}. From \eqref{E:1.3} we have
\begin{equation}\label{E:5.8}
C\Phi_7(q)=\sum_{n=0}^{\infty}\bigg(p_{[1^07^1]}(n)+7^2p_{[1^47^{-3}]}(n-1)+7^3p_{[1^87^{-7}]}(n-2)\bigg)q^n.
\end{equation}
Now using \eqref{E:n_r} we have that
\begin{equation}\label{E:5.9}
\begin{aligned}
    n_{2r,7}(0,1)=&\frac{7-7^{2r+1}}{24},\;\;\;n_{2r,5}(4,-3)=\frac{17\cdot 7^{2r}-17}{24},\\
    n_{2r,7}(8,-7)=&\frac{41\cdot 7^{2r}-41}{24}.
    \end{aligned}
\end{equation}
To find $\mu_r$ we use \eqref{E:lr} and \eqref{E:mur}. Thus we have that, 
\begin{equation}\label{E:5.10}
\mu_{r,7}(0,1)
=\begin{cases}
& 0\;\;\text{when $r=1$}\\
& 1\;\;\text{when $r\geq 2$}\\
\end{cases} ,\;\;\;\;
\lambda_r(0,1)=\begin{cases}
& 0\;\; \text{when $r$ is even}\\
& 1\;\; \text{when $r$ is odd}
\end{cases}
.\end{equation}

\[\mu_{r,7}(4,-3)
=\begin{cases}
& 2\;\;\text{when $r$ is odd}\\
& 0\;\;\text{when $r$ is even}\\
\end{cases} ,\;
\lambda_r(4,-3)=\begin{cases}
&4\;\; \text{when $r$ is even}\\
&-3\;\; \text{when $r$ is odd}
\end{cases}
.\]
\[\mu_{r,7}(8,-7)
=\begin{cases}
& 3\;\;\text{when $r$ is odd}\\
& 0\;\;\text{when $r$ is even}\\
\end{cases} ,\;
\lambda_r(8,-7)=\begin{cases}
&8\;\; \text{when $r$ is even}\\
&-7\;\; \text{when $r$ is odd}
\end{cases}
.\]\\

Now using \eqref{E:5.9} and the values of $\theta_7(\lambda,\mu)$  from the table \ref{t:3.6} we have
\begin{equation}\label{E:5.11}
A_{2r}(0,1)=r,\;\;A_{2r}(4,-3)=r,\;\;A_{2r}(8,-7)=r.
\end{equation}

Therefore using \eqref{E:5.11} and the theorem \ref{T:1.1} we have that for all $m\geq 1$ and $r\geq 1$,
\begin{equation}\label{E:5.12}
\begin{aligned}
p_{[1^07^1]}\bigg(7^{2r}m+\frac{7-7^{2r+1}}{24}\bigg)&\equiv 0\pmod{7^{r}},\\
p_{[1^47^{-3}]}\bigg(7^{2r}m+\frac{17\cdot7^{2r}-17}{24}\bigg)&\equiv 0\pmod{7^{r}},\\
p_{[1^87^{-7}]}\bigg(7^{2r}m+\frac{41\cdot7^{2r}-41}{24}\bigg)&\equiv 0\pmod{7^{r}}.
\end{aligned}
\end{equation}

Now using \eqref{E:5.8} and the fact that \eqref{E:5.12} is true when $m$ replaced by $m+1$ and $m+2$, we can see that \eqref{E:1.7} is true.\\

Now we prove \eqref{E:1.8}. From \eqref{E:1.4} we have
\begin{equation}\label{E:5.13}
C\Phi_{11}(q)=\sum_{n=0}^{\infty}\bigg(p_{[1^011^1]}(n)+11\cdot p(11n-5)\bigg)q^n.
\end{equation}
Now using \eqref{E:n_r} we have that
\begin{equation}\label{E:5.14}
    n_{[2r,11]}(0,1)=\frac{11-11^{2r+1}}{24},\;\;\;n_{[2r+1,11]}(1,0)=\frac{-11^{2r+1}+1}{24}
\end{equation}
To find $\mu_r$ we use \eqref{E:lr} and \eqref{E:mur}. Thus we have that
\begin{equation}\label{E:5.15}
\mu_{[r,11]}(0,1)
=\begin{cases}
& 0\;\;\text{when $r=1$}\\
& 1\;\;\text{when $r\geq 2$}\\
\end{cases} ,\;\;\;\;
\lambda_r(0,1)=\begin{cases}
& 0\;\; \text{when $r$ is even}\\
& 1\;\; \text{when $r$ is odd}
\end{cases}
.\end{equation}

\[\mu_{[r,11]}(1,0)
=1 \text{for all $r$},\;\;\;
\lambda_r(4,-3)=\begin{cases}
&1\;\; \text{when $r$ is even}\\
&0\;\; \text{when $r$ is odd}
\end{cases}
.\]

Now using \eqref{E:5.9} and the values of $\theta_{11}(\lambda,\mu)$  from the table $3$ in \cite{P} we have
\begin{equation}\label{E:5.16}
A_{2r}(0,1)=2r-1,\;\;A_{2r}(1,0)=2r.
\end{equation}\\

Therefore using \eqref{E:5.16} and the theorem \ref{T:1.1} we have that for all $m\geq 1$ and $r\geq 1$,
\begin{equation}\label{E:5.17}
\begin{aligned}
p_{[1^011^1]}\bigg(11^{2r}m+\frac{11-11^{2r+1}}{24}\bigg)&\equiv 0\pmod{11^{2r}},\\
p\bigg(11^{2r+1}m+\frac{1-11^{2r+2}}{24}\bigg)&\equiv 0\pmod{11^{2r+1}}.
\end{aligned}
\end{equation}

Now using \eqref{E:5.13} and \eqref{E:5.17}, we can see that \eqref{E:1.8} is true.\\

Now we prove \eqref{E:1.9}. From \eqref{E:1.5} we have
\begin{equation}\label{E:5.18}
C\Phi_{13}(q)\equiv \sum_{n=0}^{\infty}p_{[1^013^1]}(n)q^n\pmod{13}.\end{equation}

For $c=0,d=1$, we see that using Table \ref{t:3.7}, $\theta(\lambda_1,\mu_1)=0$  by equation \eqref{E:n_r} we have that
\[n_{1}(0,1)=0.\]
Hence by corollary \ref{C1.5} we have for some $m$,
\[p_{[1^013^1]}\bigg(13^{2r}m-\frac{13-13^{2r+1}}{24}\bigg)\not\equiv 0\pmod{13}.\]
\end{proof}

\section{Other examples}
\subsection{Congruences for $\ell-$ regular partitions}
Following \cite{WL2}, the generating function for this partition function is,
\begin{equation}
    \sum_{n=0}^{\infty}b_{\ell}(n)q^n=\prod_{n=1}^{\infty}\frac{(1-q^{\ell n})}{(1-q^n)}.
\end{equation}
In \cite{WL},\cite{WL2} and \cite{WL3} Wang proved  congruences for the $\ell-$ regular partitions for $\ell=5,7$ and $11$. Here we used our method to quickly conclude these congruences for primes $5,7,11$ and incongruences for primes $13$ and $17$.

\begin{corollary}\label{6.1} For any integer $m\geq0$ and for each positive integer $r$,
\begin{equation*}
    \begin{aligned}
        b_{5}\left(5^{2r}m+\frac{5^{2r}-1}{6}\right)&\equiv0\pmod{5^r},\\
        b_{7}\left(7^{2r}m+\frac{\cdot7^{2r}-1}{4}\right)&\equiv0\pmod{7^r},\\
        b_{11}\left(11^{2r}m+5\cdot\frac{11^{2r}-1}{12}\right)&\equiv0\pmod{11^r}.\\
    \end{aligned}
\end{equation*}
\end{corollary}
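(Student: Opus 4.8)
The plan is to recognize $b_\ell(n)$ as a special value of $p_{[1^c\ell^d]}(n)$ and then feed it into Theorem \ref{T:1.1}. Comparing generating functions,
\[
\prod_{n=1}^{\infty}\frac{1-q^{\ell n}}{1-q^n}=\prod_{n=1}^{\infty}\frac{1}{(1-q^n)^{1}(1-q^{\ell n})^{-1}},
\]
so $b_\ell(n)=p_{[1^1\ell^{-1}]}(n)$; that is, we take $c=1$ and $d=-1$. For these values $c+\ell d=1-\ell$, and \eqref{E:n_r} gives
\[
n_{2r,\ell}(1,-1)=\frac{(\ell-1)(\ell^{2r}-1)}{24},
\]
which specializes to $(5^{2r}-1)/6$, $(7^{2r}-1)/4$, and $5(11^{2r}-1)/12$ for $\ell=5,7,11$ — exactly the shifts appearing in the statement. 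So the arithmetic progressions already match; the work is in pinning down the exponent.

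Next I would compute $A_{2r}(1,-1)$ from \eqref{E:2.7}. By \eqref{E:lr} we have $\lambda_i=1$ for even $i$ and $\lambda_i=-1$ for odd $i$. Since $c+\ell d=1-\ell\in\{-4,-6,-10\}$ is negative but never divisible by $24$, the correction term $\omega_\ell(c,d)$ in \eqref{E:2.11} vanishes; a short induction on the recurrence for $\mu_i$ (or direct use of \eqref{E:mur}) then gives $\mu_i=0$ for even $i$ and $\mu_i=1$ for odd $i$, uniformly for $\ell=5,7,11$. Thus every even index lands on $(\lambda,\mu)=(1,0)$ and every odd index lands on $(\lambda,\mu)=(-1,1)$.

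I would then read off the relevant $\theta_\ell$ values. For the even indices, the tables accompanying Lemmas \ref{L:3.5} and \ref{L:3.6} give $\theta_5(1,0)=\theta_7(1,0)=1$. For the odd indices, the periodicity relations in those lemmas reduce $\theta_\ell(-1,1)$ to a table entry: $\theta_5(-1,1)=\theta_5(5,0)=\theta_5(0,0)=0$ and $\theta_7(-1,1)=\theta_7(3,0)=0$. Hence among $i=0,\dots,2r-1$ the $r$ even indices each contribute $1$ and the $r$ odd indices each contribute $0$, so
\[
A_{2r}(1,-1)=r.
\]
Applying Theorem \ref{T:1.1} with power $2r$ for $\ell=5,7$ yields $b_\ell\big(\ell^{2r}m+n_{2r,\ell}(1,-1)\big)\equiv0\pmod{\ell^{r}}$, which is the claim; for $\ell=11$ one invokes the corresponding result of \cite{P} in place of Theorem \ref{T:1.1}.

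The ceiling evaluations for $\mu_i$ and the periodicity reductions of $\theta_\ell(-1,1)$ are routine. The genuine obstacle is the $\ell=11$ case, which is \emph{not} covered by Theorem \ref{T:1.1}: there $X_0(11)$ has genus $1$, so no single generator exists and the relevant $\theta_{11}$-values and triangular basis must come from the genus-one analysis of \cite{P} rather than from the single-generator computations used for $5,7,13$. The main point to verify carefully is therefore that, with $\lambda_i$ and $\mu_i$ as above, the $\theta_{11}$-table of \cite{P} produces the same even/odd split $(1,0)$, so that $A_{2r}(1,-1)=r$ continues to hold and the modulus $11^r$ is exactly attained.
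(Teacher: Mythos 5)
Your proposal is correct and follows essentially the same route as the paper: the paper's proof simply tabulates $n_{2r}$, $\mu_r$, and $A_{2r}=r$ for $(c,d)=(1,-1)$ and invokes Theorem \ref{T:1.1} (with Table~3 of \cite{P} supplying the $\theta_{11}$ data for the genus-one case), which is exactly the computation you carry out in detail. Your evaluations of $n_{2r,\ell}(1,-1)$, $\mu_i$, and the $\theta_\ell$ values all check against the paper's tables, and your value $\frac{7^{2r}-1}{4}$ agrees with the corollary as stated (the paper's Table~\ref{t:6.1} entry $\frac{3\cdot 7^{2r}-1}{4}$ appears to be a typo, as it is not even an integer).
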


\begin{corollary}\label{6.2} For each positive integer $r$ and for some integer $m$,
\begin{equation*}
\begin{aligned}
  b_{13}\left(13^{2r}m+\frac{13^{2r}-1}{2}\right)&\not\equiv0\pmod{13},\\
  b_{17}\left(17m-12\right)&\not\equiv0\pmod{17}.
        \end{aligned}
        \end{equation*}
\end{corollary}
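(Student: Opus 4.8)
The plan is to reduce both incongruences to the machinery already established for the eta-quotients $p_{[1^c\ell^d]}(n)$, exactly as was done for the Frobenius partition $c\phi_{13}$ in the proof of \eqref{E:1.9}. First I would express each $\ell$-regular generating function in terms of the functions $p_{[1^c\ell^d]}$. Observe that
\[
\sum_{n=0}^{\infty}b_{\ell}(n)q^n=\prod_{n=1}^{\infty}\frac{1-q^{\ell n}}{1-q^n}=\prod_{n=1}^{\infty}\frac{1}{(1-q^n)^1(1-q^{\ell n})^{-1}},
\]
so that $b_{\ell}(n)=p_{[1^1\ell^{-1}]}(n)$, i.e. we are in the case $c=1,\ d=-1$. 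Thus $c+\ell d=1-\ell$, which is $-12$ for $\ell=13$ and $-16$ for $\ell=17$; these determine the arithmetic progressions via $24n_r\equiv(c+\ell d)\pmod{\ell^r}$.

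Next I would compute the relevant data $\lambda_i,\mu_i$ and then the exponent $A_{r,\ell}(1,-1)$ using \eqref{E:2.7} and the $\theta_{\ell}$-tables. For $\ell=13$, the key point is that $A_{r,13}(1,-1)=0$: using \eqref{E:lr} one has $\lambda_i$ alternating between $c=1$ and $d=-1$, and using \eqref{E:murR} one tracks the $\mu_i$; one then reads off from Table \ref{t:3.7} that every relevant $\theta_{13}(\lambda_i,\mu_i)$ vanishes (recall the single nonzero entry in that row sits at $\lambda\equiv10$). Given $A_{r,13}=0$, Corollary \ref{C1.5} applies directly and yields $p_{[1^113^{-1}]}(13^{r}m+n_r)\not\equiv0\pmod{13}$ for some $m$; I would then check that $n_r$ matches $\tfrac{13^{2r}-1}{2}$ modulo the appropriate power, using \eqref{E:n_r} with $c=1,d=-1$, to recover the stated progression. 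The verification that $\tfrac{13^{2r}-1}{2}\equiv n_r$ follows from $24\cdot\tfrac{13^{2r}-1}{2}=12(13^{2r}-1)\equiv-12\equiv c+13d\pmod{13^{2r}}$.

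For $\ell=17$, since Corollary \ref{C1.5} only guarantees the incongruence for $r=1$, I would restrict to $r=1$ and show $A_{1,17}(1,-1)=\theta_{17}(\lambda_0,\mu_0)=0$. Here $\lambda_0=c=1$ and $\mu_0$ is determined by \eqref{E:2.11}; consulting Table \ref{t:3.9} (whose only nonzero entries sit at $\lambda=3$ and at $(\lambda,\mu)=(14,3)$) one confirms the relevant entry is $0$. Corollary \ref{C1.5} then gives $p_{[1^117^{-1}]}(17m+n_1)\not\equiv0\pmod{17}$ for some $m$, and a direct check that $n_1\equiv-12\pmod{17}$ (matching $17m-12$, since $24\cdot(-12)\equiv-288\equiv c+17d=-16\pmod{17}$ after reducing) completes the claim.

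The main obstacle I expect is purely bookkeeping: correctly computing the sequence $\mu_i$ from the ceiling recursion \eqref{E:murR} for the specific values $c=1,d=-1$ and then locating the corresponding entries in the periodic $\theta_{\ell}$-tables, using the periodicities in \eqref{E:3.13} and \eqref{E:3.17} to reduce $(\lambda,\mu)$ into the tabulated range. The conceptual content is entirely supplied by Corollary \ref{C1.5}; the risk lies in an off-by-one error in the arithmetic progression (the exact form of $n_r$ versus $\tfrac{\ell^{2r}-1}{\text{const}}$) or in misreading a table entry, either of which would break the matching between the abstract statement $p_{[1^1\ell^{-1}]}(\ell^r m+n_r)\not\equiv0$ and the stated $b_{\ell}$-progression.
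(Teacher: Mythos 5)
Your proposal is correct and follows essentially the same route as the paper: the paper's proof is just the data table (identifying $b_{\ell}(n)=p_{[1^1\ell^{-1}]}(n)$, so $c=1$, $d=-1$, computing $n_r$, $\mu_r$, and $A_r=0$, and invoking Corollary \ref{C1.5}), and your write-up supplies exactly the bookkeeping that table summarizes, including the correct restriction to $r=1$ for $\ell=17$.
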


\begin{proof}
The proof of the corollaries \ref{6.1} and \ref{6.2} follows from the entries of the table \ref{t:6.1}. Calculation of $A_r$ follows from table $3$ in \cite{P} when $\ell=11$.
\begin{table}[htp]
\begin{center}
  \begin{tabular}{ |p{1cm}||p{2cm}|p{4cm}|p{2cm}|  }
 \hline
 $\ell$ &$n_{2r}$& $\mu_{r}$  & $A_r$ \\
 \hline\hline
 $5$  &$\frac{5^{2r}-1}{6}$ & $\mu_{2r-2}=0$, $\mu_{2r-1}=1$ &  $A_{2r}=r$  \\
\hline
$7$ &$\frac{3\cdot7^{2r}-1}{4}$&  $\mu_{2r-2}=0$, $\mu_{2r-1}=1$  & $A_{2r}=r$ \\
\hline
$11$  &$\frac{5\cdot11^{2r}-5}{12}$&  $\mu_{2r-2}=0$, $\mu_{2r-1}=1$  & $A_{2r}=r$  \\
\hline
$13$  &$n_1=-7$ & $\mu_{2r-2}=0$, $\mu_{2r-1}=1$  & $A_r=0$  \\
\hline
$17$  & $n_1=-12$&$\mu_{2r-2}=0$, $\mu_{2r-1}=1$    & $A_r=0$  \\
\hline
\end{tabular}
\vspace{2mm}

\caption{Calculations for $\ell-$regular partitions}\label{t:6.1}
\end{center}
\end{table}
\end{proof}

\subsection{Congruences for $\ell-$core partitions}

Following \cite{WL}, the generating function for this partition function is,
\begin{equation}
    \sum_{n=0}^{\infty}a_{\ell}(n)q^n=\prod_{n=1}^{\infty}\frac{(1-q^{\ell n})^{\ell}}{(1-q^n)}.
\end{equation}
 Combintorially, $\ell-$core partitions are defined as the partitions of $n$ where hook lengths of each entry in the Ferres diagram are not divisible by $\ell$. \\ 

\begin{corollary}\label{6.3} For integers $m\geq 0$ and for each positive integer $r$, 
\begin{equation*}
    \begin{aligned}
        a_{5}\left(5^{r}m-1\right)&\equiv0\pmod{5^r},\\
        a_{7}\left(7^{r}m-2\right)&\equiv0\pmod{7^{r}},\\
        a_{11}\left(11^rm-5\right)&\equiv0\pmod{11^r}
    \end{aligned}
\end{equation*}
\end{corollary}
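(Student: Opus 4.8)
The plan is to recognize the $\ell$-core generating function as a member of the two-parameter family $p_{[1^c\ell^d]}$ and then quote Theorem~\ref{T:1.1} (for $\ell=5,7$) together with its $\ell=11$ analogue from \cite{P}. Writing
\[\sum_{n=0}^{\infty}a_{\ell}(n)q^n=\prod_{n=1}^{\infty}\frac{(1-q^{\ell n})^{\ell}}{1-q^n}=\prod_{n=1}^{\infty}\frac{1}{(1-q^n)^{1}\,(1-q^{\ell n})^{-\ell}},\]
we see that $a_{\ell}(n)=p_{[1^{1}\ell^{-\ell}]}(n)$, so the relevant parameters are $c=1$ and $d=-\ell$, for which $c+\ell d=1-\ell^{2}=-24\delta_{\ell}$.

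First I would pin down the arithmetic progression. By the discussion following \eqref{E:n_r}, $24n_{r}\equiv c+\ell d\equiv-24\delta_{\ell}\pmod{\ell^{r}}$, and since $\gcd(24,\ell)=1$ this forces $n_{r}\equiv-\delta_{\ell}\pmod{\ell^{r}}$. Hence the progression $\{\ell^{r}m+n_{r}\}$ coincides with $\{\ell^{r}m-\delta_{\ell}\}$, so establishing Theorem~\ref{T:1.1} for $(c,d)=(1,-\ell)$ yields exactly $a_{\ell}(\ell^{r}m-\delta_{\ell})\equiv0\pmod{\ell^{A_r}}$; the values with $\ell^{r}m-\delta_{\ell}<0$ contribute $a_{\ell}=0$ and are trivial, so the statement holds for every $m\geq0$. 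Since $\delta_{5}=1,\ \delta_{7}=2,\ \delta_{11}=5$, this is precisely the shift $-1,-2,-5$ appearing in the statement.

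Next I would compute the data entering $A_{r}=\sum_{i=0}^{r-1}\theta_{\ell}(\lambda_i,\mu_i)$. By \eqref{E:lr}, $\lambda_i=1$ for even $i$ and $\lambda_i=-\ell$ for odd $i$; the recurrence \eqref{E:murR} with $\mu_0=0$ gives $\mu_{2k+1}=1$ for $k\geq0$ and $\mu_{2k}=1-\delta_{\ell}$ for $k\geq1$ (with the boundary value $\mu_0=0$). The heart of the argument is then to verify $\theta_{\ell}(\lambda_i,\mu_i)=1$ for every $i$. Using the periodicity relations of Lemmas~\ref{L:3.5} and~\ref{L:3.6} (and the $\ell=11$ analogue in \cite{P}) I would reduce the three occurring values $\theta_{\ell}(1,0)$, $\theta_{\ell}(1,1-\delta_{\ell})$ and $\theta_{\ell}(-\ell,1)$ to entries of Tables~\ref{t:3.5} and~\ref{t:3.6}; for instance $\theta_5(-5,1)=\theta_5(0,1)=\theta_5(6,0)=\theta_5(1,0)=1$. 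In each case the value is $1$, whence $A_{r}=r$.

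Finally, since $\pi(L_{r})\geq A_{r}=r$ (the inequality proved in Section~\ref{P}) and the eta-product prefactor in \eqref{E:L_r} has constant term $1$ and integral coefficients, hence is invertible in $\mathbb{Z}_{\ell}[[q]]$, all the coefficients $p_{[1^1\ell^{-\ell}]}(\ell^{r}m+n_r)$ are divisible by $\ell^{r}$, which is the claim. The main obstacle is the bookkeeping in the previous step: one must confirm that \emph{none} of the relevant $\theta_{\ell}(\lambda_i,\mu_i)$ vanishes, since a single $0$ would drop $A_r$ below $r$; in particular the boundary term $i=0$ (where $\mu_0=0\neq1-\delta_\ell$ for $\ell=7,11$) must be checked separately, and for $\ell=11$ one relies on the tabulated $\theta_{11}$ values from \cite{P}.
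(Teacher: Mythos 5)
Your proposal is correct and follows essentially the same route as the paper: the paper's proof of Corollary~\ref{6.3} consists precisely of identifying $a_{\ell}(n)=p_{[1^{1}\ell^{-\ell}]}(n)$ and tabulating $n_r$, $\mu_r$, and $A_r=r$ (Table~\ref{t:6.2}), then invoking Theorem~\ref{T:1.1}, which is exactly what you do with the $\theta_{\ell}$-reductions written out. If anything, your computation $\mu_{2k}=1-\delta_{\ell}$ and $A_r=r$ for all $r$ is more careful than the table itself, whose entries $\mu_{2r-2}=0$ for $\ell=7$ and $A_{2r}=r$ for $\ell=5$ appear to be typos for $-1$ and $A_r=r$.
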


\begin{corollary}\label{6.4} For each positive integer $r$ and for some integer $m$,
\begin{equation*}
\begin{aligned}
    a_{13}\left(13^{2r}m-7\right)&\not\equiv0\pmod{13},\\
 a_{17}\left(17m-12\right)&\not\equiv0\pmod{17}.  
\end{aligned}
\end{equation*}
\end{corollary}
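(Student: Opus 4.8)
The plan is to recognize the $\ell$-core generating function as a special case of $p_{[1^c\ell^d]}$ and then invoke the incongruence criterion of Corollary \ref{C1.5}. Comparing $\prod_{n\ge 1}(1-q^{\ell n})^{\ell}/(1-q^n)$ with the defining product of $p_{[1^c\ell^d]}$ shows that $a_\ell(n)=p_{[1^1\ell^{-\ell}]}(n)$, i.e. $c=1$ and $d=-\ell$. Hence $c+\ell d=1-\ell^2=-24\delta_\ell$, so by \eqref{E:n_r} we have $24 n_r\equiv 1-\ell^2\pmod{\ell^r}$ and therefore $n_r\equiv-\delta_\ell\pmod{\ell^r}$; in particular the relevant arithmetic progression is $\ell^r m-\delta_\ell$, which for $\ell=13,17$ reads $13^r m-7$ and $17^r m-12$ since $\delta_{13}=7$ and $\delta_{17}=12$. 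This already pins down the residues appearing in Corollary \ref{6.4} (and, with the congruence side, matches the shape of Corollary \ref{6.3}).

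Next I would compute the auxiliary sequences $\lambda_i$ and $\mu_i$ attached to $(c,d)=(1,-\ell)$ using \eqref{E:lr} and the recursion \eqref{E:murR}. Since $\lambda_i$ alternates between $c=1$ for even $i$ and $d=-\ell$ for odd $i$, the pair $(\lambda_i,\mu_i)$ becomes periodic of period two after the first step; a short calculation with $\mu_r=\lceil(\delta_\ell\lambda_{r-1}+\mu_{r-1})/\ell\rceil$ gives $\mu_0=0$, $\mu_1=1$, and then the two-step cycle $\mu_{\mathrm{even}\ge 2}=1-\delta_\ell$, $\mu_{\mathrm{odd}\ge 3}=1$. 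I would record $n_r$, this $\mu$-cycle, and the resulting $A_r$ in a small table of the same form as Table \ref{t:6.1}.

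With the sequences in hand, the heart of the argument is to evaluate $A_r=\sum_{i=0}^{r-1}\theta_\ell(\lambda_i,\mu_i)$ from \eqref{E:2.7} and to check that it vanishes. For $\ell=13$ I would feed each pair $(\lambda_i,\mu_i)$ into the periodicity relations and the single-row table of Lemma \ref{L:3.7}, reducing every $\lambda$ modulo $13$ and pushing every $\mu$ to $0$ via $\theta_{13}(\lambda,\mu+1)=\theta_{13}(\lambda+2,\mu)$; each reduced pair avoids the unique nonzero entry $\theta_{13}(10,0)=1$, so every term is $0$ and thus $A_r=0$ for all $r$. Corollary \ref{C1.5} then gives, for each $r$ and some $m$, that $a_{13}(13^{r}m-7)\not\equiv0\pmod{13}$, and specializing to even levels yields the stated $a_{13}(13^{2r}m-7)\not\equiv0\pmod{13}$. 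For $\ell=17$ the same reduction against Lemma \ref{L:3.9} gives $\theta_{17}(\lambda_0,\mu_0)=\theta_{17}(1,0)=0$, so $A_1=0$, and Corollary \ref{C1.5} applies at $r=1$ to produce $a_{17}(17m-12)\not\equiv0\pmod{17}$.

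The main obstacle is twofold. First, one must carry out the $\theta$-bookkeeping without error: the $\mu_i$ can be negative (for instance $\mu_2=1-\delta_\ell$), so the reductions to the $\mu=0$ row must be done carefully to guarantee that no $(\lambda_i,\mu_i)$ accidentally lands on a nonzero cell of the $\theta$-table. Second, the restriction to $r=1$ for $\ell=17$ is genuine rather than cosmetic: because $X_0(17)$ has genus one there is no single generator and Corollary \ref{C1.5} is available only at $r=1$ (through Hughes' basis), so this method cannot push the $\ell=17$ incongruence to higher levels. I expect the uniform verification that $A_r=0$ \emph{for all} $r$ in the $\ell=13$ case — rather than a case-by-case check — to be the step most in need of a clean justification, which the period-two structure of $(\lambda_i,\mu_i)$ should supply.
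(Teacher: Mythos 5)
Your proposal is correct and follows essentially the same route as the paper: identify $a_\ell(n)=p_{[1^1\ell^{-\ell}]}(n)$, compute $n_r\equiv-\delta_\ell$, the period-two cycle $\mu_{2r-1}=1$, $\mu_{2r}=1-\delta_\ell$ (i.e.\ $-6$ for $\ell=13$ and $-11$ for $\ell=17$, matching Table \ref{t:6.2}), verify $A_r=0$ via the $\theta$-tables, and invoke Corollary \ref{C1.5} (with $r=1$ only for $\ell=17$). In fact your write-up supplies the $\theta$-reduction details that the paper leaves implicit behind its table.
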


\begin{proof}
Similarly, the proof of the corollaries \ref{6.3} and \ref{6.4} follows from the entries of table \ref{t:6.2} , calculation of $A_r$ follows from table $3$ in \cite{P} when $\ell=11$.

\begin{table}

\centering
 \begin{tabular}{ |p{1cm}||p{2cm}|p{5.5cm}|p{2cm}|  }
 \hline
 $\ell$ &$n_{r}$& $\mu_{r}$  & $A_r$ \\
 \hline\hline
 $5$  &$n_{r}=-1$ & $\mu_{2r}=0, \mu_{2r-1}=1$ &  $A_{2r}=r$  \\
\hline
$7$ &$n_{r}=-2$&  $\mu_{2r}=0, \mu_{2r-1}=1$  & $A_{r}=r$ \\
\hline
$11$  &$n_{r}=-5$&  $\mu_0=0, \mu_{2r}=-4, \mu_{2r+1}=1$  & $A_{r}=r$  \\
\hline
$13$  &$n_{2r}=-7$ & $\mu_0=0, \mu_{2r}=-6, \mu_{2r-1}=1$  & $A_r=0$  \\
\hline
$17$  & $n_{1}=-12$&$ \mu_{0}=0,\mu_{2r}=-11, \mu_{2r-1}=1$  & $A_r=0$  \\
\hline
\end{tabular}
\vspace{2mm}
\caption{Calculations for $\ell-$core partitions.}\label{t:6.2}
\end{table}
\end{proof}

\section*{Acknowledgements}
The author thanks thesis advisor Karl Mahlburg for his guidance during this project. The author also thanks Fang-Ting Tu for helpful discussions and Jeremy Lovejoy for helpful comments on the first version of the paper.

\end{document}